\theoremstyle{plain}
\newtheorem{thm}{Theorem}[section]
\newtheorem{lem}[thm]{Lemma}
\newtheorem{cor}[thm]{Corollary}
\newtheorem{prop}[thm]{Proposition}
\newtheorem{question}[thm]{Question}
\newtheorem{conj}[thm]{Conjecture}
\newtheorem*{thm*}{Theorem}
\newtheorem*{problem*}{Problem}
\theoremstyle{definition}
\newtheorem{defn}[thm]{Definition}
\newtheorem{rem}[thm]{Remark}
\newtheorem{ex}[thm]{Example}
\newenvironment{sproof}{%
  \proof}{\endproof}
\numberwithin{equation}{section}
\newcommand{\Spec}{\operatorname{Spec}}
\renewcommand{\O}{{\mathcal O}}
\renewcommand{\hom}{\operatorname{Hom}}
\newcommand{\real}{{\mathbb R}}
\newcommand{\cplx}{{\mathbb C}}
\newcommand{\Z}{{\mathbb Z}}
\newcommand{\R}{{\mathbb R}}
\newcommand{\PP}{{\mathbb P}}
\newcommand{\pone}{{\mathbb P}^1}
\newcommand{\im}{\operatorname{Im}}
\newcommand{\Ob}{\operatorname{Ob}}
\newcommand{\gm}{\mathbb{G}_{m}}
\newcommand{\Aut}{\operatorname{Aut}}
\newcommand{\Pic}{\operatorname{Pic}}
\newcommand{\bbP}{\mathbb{P}}
\newcommand{\dP}{\mathsf{dP}}
\newcommand{\leftexp}[2]{{\vphantom{#2}}^{#1}{#2}}
\newcommand{\weezer}{\leftexp{=}{\kern-0.23em\mathsf{W}}^{\kern-0.21em =}}
\begin{document}

\title[Derived categories of arithmetic toric varieties]{On derived categories of arithmetic toric varieties}

\author[Ballard]{Matthew R Ballard}
\address{Department of Mathematics, University of South Carolina, 
Columbia, SC 29208}
\email{ballard@math.sc.edu}
\urladdr{\url{http://people.math.sc.edu/ballard/}}

\author[Duncan]{Alexander Duncan}
\address{Department of Mathematics, University of South Carolina, 
Columbia, SC 29208}
\email{duncan@math.sc.edu}
\urladdr{\url{http://people.math.sc.edu/duncan/}}

\author[McFaddin]{Patrick K. McFaddin}
\address{Department of Mathematics, University of South Carolina, 
Columbia, SC 29208}
\email{pkmcfaddin@gmail.com}
\urladdr{\url{http://mcfaddin.github.io/}}

\begin{abstract}
We begin a systematic investigation of derived categories of smooth
projective toric varieties defined over an arbitrary base field.
We show that, in many cases, toric varieties admit full exceptional
collections.
Examples include all toric surfaces, all toric Fano 3-folds, some toric
Fano 4-folds, the generalized del Pezzo varieties of Voskresenski\u\i \
and Klyachko, and toric varieties associated to Weyl fans of type $A$.
Our main technical tool is a completely general Galois descent result
for exceptional collections of objects on (possibly non-toric) varieties
over non-closed fields.
\end{abstract}

\maketitle
\addtocounter{section}{0}


\section{Introduction}

Recently, several intriguing threads relating derived categories and
arithmetic geometry have emerged and motivated general structure questions 
for $k$-linear triangulated categories for arbitrary
fields $k$.  Such exploration has yielded many nice applications as well as 
further enticing problems, see as a sampling \cite{AKW, AAGZ, ADPZ, HT, Honigs, LiebMaulSnow}. 
Meanwhile over $\mathbb{C}$, structural results for derived
categories seem to have deep implications for the underlying birational geometry, e.g.
\cite{AT, ABB, BB, BMMSS, KuznetsovCubic4fold, Vial}. Taking these together,
derived categories become an important invariant for studying birational
geometry over a general field \cite{AB}. A further benefit of this
noncommutative approach is direct utility for solving problems in algebraic $K$-theory,
for example \cite{MerkPan}.

With such tantalizing ties, one would like a fertile testing ground for
questions. In this paper, we begin a systematic study of one such area:
derived categories of arithmetic toric varieties. This area has the
following nice features:

\begin{itemize}
 \item rationality issues are deep in general but tractable in examples,
 \item robust tools already exist to investigate derived categories over
the separable closure,
 \item and specific questions are often amenable to computational
experimentation.
\end{itemize}

One of the best tools for understanding a derived category is an
\emph{exceptional collection} consisting of \emph{exceptional objects}.
As originally conceived in \cite{Beilinson}, an exceptional object of a $k$-linear derived category is one whose endomorphism algebra
is isomorphic to the base field $k$.
When $k$ is not algebraically closed, this definition is too restrictive and
instead we use the existing notion: an object of $\mathsf{D^b}(X)$ is \emph{exceptional} if its endomorphism algebra is a division algebra. 
Details are discussed in Section~\ref{sect:galdesc} below.

We illustrate this more general notion for two arithmetic toric
varieties.
The real conic $X = \{x^2 + y^2 + z^2 = 0\} \subset \bbP^2_{\real}$
has an exceptional collection $\{ \O, \mathcal{F} \}$, where
$\text{End}(\mathcal{F})$ is isomorphic to the quaternion algebra
$\mathbb{H}$.
Over $\cplx$, we have $X_\cplx \simeq \pone_\cplx$ and $\mathcal{F}\otimes_{\real} \cplx \simeq \O(1)^{\oplus 2}$.
As another example, consider the Weil restriction $Y$ of $\pone_\cplx$ over
$\real$ (``$\pone(\cplx)$ viewed as an $\real$-variety'').
Here $Y$ has an exceptional collection $\{ \O, \mathcal{G},
\mathcal{H}\}$ where $\text{End}(\mathcal{G}) \simeq \cplx$ and
$\text{End}(\mathcal{H})
\simeq \real$.
Over $\cplx$, we have $Y\otimes_{\real} \cplx \simeq \pone \times \pone$
with
$\mathcal{G} \otimes_{\real} \cplx
\simeq \O(1,0) \oplus \O(0,1)$ and $\mathcal{H} \otimes_{\real} \cplx\simeq  \O(1,1)$,
where $\O(i, j) = \pi_1^*\O(i) \otimes \pi_2 ^*\O(j)$.

A central question for derived categories of arithmetic toric varieties is the following: 

\begin{question}\label{quest:main}
 Let $X$ be a smooth projective toric variety over an arbitrary field.  Does $X$ admit a full exceptional collection? If so, does $X$ possess a full exceptional collection of sheaves?
\end{question}
 
Over an algebraically-closed field of characteristic zero, there is always a full exceptional collection of objects \cite{Kawamata,Kawamata2} while the question of a full exceptional collection of sheaves is due to Orlov. 
Making allowances for different language,
the question is also known to have a positive answer for
Severi-Brauer varieties~\cite{AB,Bernardara},
minimal toric surfaces~\cite{BSS},
and smooth projective toric varieties with absolute Picard rank at most
$2$~\cite{Yan}.

In this article, we provide further evidence for a positive answer to
Question~\ref{quest:main},
treating cases with low dimension or a high degree of symmetry.

\begin{thm} \label{thm:examples}
 The following possess full exceptional collections of sheaves:
 \begin{itemize}
  \item smooth toric surfaces (Proposition~\ref{prop:surface}),
  \item smooth toric Fano 3-folds (Proposition~\ref{prop:3fold}), 
  \item all forms of 43 of the 124 smooth split toric Fano 4-folds (Section~\ref{sect:4fold}),
  \item all forms of centrally symmetric toric Fano varieties (Corollary~\ref{cor:centsym}), and
  \item and all forms in characteristic zero of toric varieties corresponding to Weyl fans of root systems of type $A$ (Proposition~\ref{prop:X(An)excpcoll}).
 \end{itemize}
\end{thm}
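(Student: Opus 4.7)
\begin{sproof}
The plan is to reduce each bullet to the construction, over the separable closure $k^{\mathrm{sep}}$, of a full exceptional collection of sheaves that is stable as a \emph{set} under the Galois action, and then invoke the descent theorem flagged in the abstract. Over $k^{\mathrm{sep}}$, smooth projective toric varieties have been intensively studied, so the work in each case is to organize an existing collection into Galois orbits; descent then produces sheaves on $X$ whose endomorphism algebras are the twisted group algebras of those orbits, exactly as in the $\bbP^2_{\real}$-conic and Weil-restriction examples exhibited above.

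For the first three bullets, I would proceed via classification. Smooth split toric surfaces are iterated blow-ups of Hirzebruch surfaces and $\bbP^2$, and the standard line-bundle exceptional collections on these are permuted by any combinatorial automorphism of the fan, in particular by the Galois action. For smooth toric Fano 3-folds one works through Batyrev's list of 18 split forms, checking each for a Galois-equivariant full exceptional collection; the analogous combinatorial search among Batyrev's 124 split toric Fano 4-folds yields the 43 examples appearing in the theorem. The restriction to this subset should reflect the fact that Galois stability, rather than the existence of a collection over $k^{\mathrm{sep}}$, is the binding constraint.

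For centrally symmetric toric Fanos, the Voskresenski\u\i--Klyachko classification realizes these as products of a few building blocks whose fans carry the antipodal symmetry $-\id$. Any arithmetic twist of a toric variety acts through the automorphism group of the fan and so commutes with $-\id$, making a symmetric exceptional collection constructed by pairing rays automatically Galois-stable. For Weyl fans of type $A_n$, the variety is the permutohedral variety, an iterated blow-up of $\bbP^n$ along coordinate subspaces; in characteristic zero, Orlov's blow-up formula assembles a full exceptional collection whose blocks are indexed by subsets of $\{0,\ldots,n\}$, and the natural $S_{n+1}$-action on such subsets is exactly the combinatorial shadow of any Galois action. The main obstacle throughout is not the construction of an exceptional collection over $k^{\mathrm{sep}}$---most are already in the literature---but choosing one that is Galois-stable; for the surface, 3-fold, centrally symmetric, and type $A$ cases this is essentially forced by structural features of the fan, while in the 4-fold case it is delicate and handled case by case.
\end{sproof}
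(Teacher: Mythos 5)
Your high-level strategy is the correct one and does match the paper: build an $\Aut(\Sigma)$-stable (hence Galois-stable) full exceptional collection over the splitting field and descend it via the general descent theorem. But the case-by-case details you sketch diverge from (and in places fall short of) what actually makes the argument go through. For surfaces, the minimal models are not just $\bbP^2$ and Hirzebruch surfaces --- $\dP_6$ also appears --- and the genuine issue is not the collections on the minimal models (which are classical) but how to propagate stability through a $k$-defined blow-up of a Galois orbit of $k^s$-points; the paper needs a dedicated blow-up lemma (Lemma~\ref{lem:blowup}, an iterated Orlov argument combined with descent) to do this, which your sketch omits. For toric Fano 3-folds you propose a case-by-case search through Batyrev's 18 split forms; the paper instead observes that $\mathsf{Frob}(X_L)\cap(-\operatorname{Nef}(X_L))$ is a \emph{canonical} construction (Lemma~\ref{lem:FrobNef}) and is therefore automatically $\Aut(\Sigma)$-stable once Uehara shows it is a full strong exceptional collection; this is cleaner and explains why no case analysis of stability is needed. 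For 4-folds the cut at 43 of 124 reflects exactly which of Prabhu-Naik's collections come from canonical constructions (the Bondal-Uehara method, $\mathsf{Frob}(X)$ being already exceptional, or products), not an ad hoc ``combinatorial search,'' though your intuition about stability being the binding constraint is right.

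The two high-symmetry cases are where your sketch has real gaps. The centrally symmetric Fanos are products of $\bbP^1$'s and the generalized del Pezzos $V_n$, and $V_n$ is \emph{not} an iterated blow-up of $\bbP^n$ --- its construction involves flips, so Orlov's blow-up formula does not apply directly; the paper uses an explicit $(S_{n+1}\times C_2)$-stable collection of line bundles $F_{c,J}$ (Proposition~\ref{prop:Vn}, essentially Castravet--Tevelev), whose stability is checked by hand and whose fullness requires the semi-orthogonal decompositions for flips from \cite{BFK}, not blow-ups. Likewise for $X(A_n)$: although it is an iterated blow-up of $\bbP^n$, naively running Orlov at each stage forces a choice of ordering among centers of the same dimension and does not automatically produce an $(S_{n+1}\rtimes C_2)$-stable collection, nor is the recursive input on each blown-up stratum free. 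The paper instead imports the Castravet--Tevelev collection $\mathsf{CT}_N$ (built from pushforwards along boundary strata and pullbacks along forgetful maps), then verifies it is $\Aut(\Sigma(A_n))$-stable via the identification $\Aut(A_n)\simeq\Aut(\Sigma(A_n))$ and, separately, that each member is torus-equivariant (Proposition~\ref{prop:CTistoric}) so the black-box descent (Proposition~\ref{prop:blackbox}) applies to \emph{all} forms, not just those arising from fan twists. Your sketch asserts the existence of a stable Orlov-type collection without supplying either the equivariance check or the fullness argument, and the latter is precisely where the characteristic-zero hypothesis enters.
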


Our results leverage extant work in the algebraically closed case
such as \cite{Uehara} for 3-folds and\cite{Prabhu} for 4-folds. We use Castravet and Tevelev's recently discovered exceptional collection for $X(A_n)$ \cite{CT}. 
For the centrally symmetric toric Fano varieties (which are
products of ``generalized del Pezzo varieties'' and projective lines
\cite{VosKly}), we use an explicit exceptional
collection (see also \cite{BDMdP}) closely related to the one found in \cite{CT}.
Up to a twist by a line bundle, the authors had independently
discovered the exact same collection!
This suggests that symmetry imposes strong conditions on the
possible exceptional collections, which paradoxically makes them easier to find.

To study arithmetic exceptional collections, we establish
an effective Galois descent result for general exceptional collections.
This applies to general varieties, not just toric ones.

\begin{thm}[Theorem~\ref{thm:descblocks}, Lemma~\ref{lem:Galconverse}]
 Let $X$ be a $k$-scheme and $L/k$ a $G$-Galois extension.  Then $X_L$ admits a full (resp. strong) $G$-stable exceptional collection of objects of $\mathsf{D^b}(X_L)$ (resp. sheaves, resp. vector bundles) if and only if $X$ admits a full (resp. strong) exceptional collection of objects of $\mathsf{D^b}(X)$ (resp. sheaves, resp. vector bundles).
\end{thm}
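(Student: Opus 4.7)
The plan is to use faithfully flat Galois descent along $\pi: X_L \to X$, which identifies $\mathsf{D^b}(X)$ with the category of $G$-equivariant objects of $\mathsf{D^b}(X_L)$ and computes $\text{Hom}$-complexes on $X$ as $G$-invariants of those on $X_L$. Both implications proceed by passing between \emph{blocks} on $X_L$ (that is, $G$-orbits of mutually orthogonal exceptional objects) and single exceptional objects on $X$.

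For the direction from $X$ to $X_L$: given an exceptional object $E \in \mathsf{D^b}(X)$ with $\text{End}(E) = D$ a division $k$-algebra, pull back to obtain $\pi^*E$, whose endomorphism algebra $D \otimes_k L$ is semisimple. Decomposing $D \otimes_k L$ into matrix algebras over division $L$-algebras induces a splitting $\pi^*E \cong \bigoplus_j F_j^{\oplus n_j}$ where each $F_j$ is exceptional on $X_L$. The Galois group $G$ permutes the central idempotents of $D \otimes_k L$, hence the summands $\{F_j\}$, yielding a $G$-stable block. Applied to each object in a full (resp. strong) exceptional collection on $X$, this gives a full (resp. strong) $G$-stable exceptional collection on $X_L$; semiorthogonality between distinct blocks follows from flat base change applied to the Hom-vanishing on $X$, and fullness transfers because $\pi^*$ preserves a generating set for the triangulated category.

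For the converse direction, given a $G$-stable exceptional collection on $X_L$ organized into $G$-orbit blocks $O_1, \ldots, O_m$, form the direct sum $F_{O_i} = \bigoplus_{F \in O_i} F$, which carries canonical descent data from the $G$-action. Galois descent produces an object $E_i \in \mathsf{D^b}(X)$ with $\pi^*E_i \cong F_{O_i}$ and $\text{End}(E_i) = \text{End}(F_{O_i})^G$. Since objects within an orbit are mutually orthogonal, $\text{End}(F_{O_i})$ is a product of matrix algebras over division $L$-algebras, and a short Galois cohomology computation (reducing to the stabilizer of a single $F \in O_i$) identifies the $G$-invariants with a division $k$-algebra, realized as a crossed product. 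The ordered collection $\{E_1, \ldots, E_m\}$ inherits semiorthogonality from the $F_j$ by taking $G$-invariants of the vanishing Hom-complexes, and inherits fullness because any $G$-equivariant triangulated subcategory of $\mathsf{D^b}(X_L)$ equal to the whole must arise as the base change of $\mathsf{D^b}(X)$ itself.

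The main obstacle is the cohomological identification of $\text{End}(F_{O_i})^G$ as a division algebra: the Brauer-type obstruction to descending a single $F$ on $X_L$ to an object on $X$ lives in $H^2$ of its $G$-stabilizer with values in $\text{Aut}(F)$, and is absorbed into the multiplicity of $F$ within its block, reappearing as the noncommutative structure of the resulting division algebra over $k$. The strong, sheaf, and vector bundle refinements then transfer formally: higher Ext vanishing (for ``strong'') is preserved by both flat base change and by taking $G$-invariants, while coherence and local freeness are preserved by $\pi^*$ and by the descent equivalence because $L/k$ is finite \'etale.
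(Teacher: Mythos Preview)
Your argument for the direction $X \Rightarrow X_L$ is essentially the paper's (Lemma~\ref{lem:Galconverse}): pull back, decompose the semisimple algebra $\operatorname{End}(E_i)\otimes_k L$, and observe that $G$ permutes the resulting summands.

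The converse direction, however, has a genuine gap. You assert that the orbit sum $F_{O_i}=\bigoplus_{F\in O_i} F$ ``carries canonical descent data from the $G$-action'' and therefore descends to some $E_i$ with $\pi^*E_i\cong F_{O_i}$. This is false in general. $G$-stability only tells you that $g^*F_{O_i}\cong F_{O_i}$ \emph{non-canonically}: each isomorphism $g^*F_j\cong F_{\sigma_g(j)}$ is determined only up to a unit of $\operatorname{End}(F_{\sigma_g(j)})$, and the obstruction to choosing these compatibly is exactly the $H^2$-class you mention in your final paragraph. That class need not vanish. The real conic $X$ from the introduction already exhibits this: over $\mathbb{C}$ one has $X_{\mathbb{C}}\cong\mathbb{P}^1$, the line bundle $\mathcal{O}(1)$ is a singleton $G$-orbit, yet there is no object $E$ on $X$ with $\pi^*E\cong\mathcal{O}(1)$. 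Your intuition that the obstruction is ``absorbed into the multiplicity of $F$ within its block'' is morally correct, but your construction gives each $F$ multiplicity~$1$, so nothing is absorbed.

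The paper avoids this by never attempting to descend $F_{O_i}$. Instead, for any single $E$ in the orbit it takes $\pi_*E$, which lives on $X$ tautologically. One then computes $\pi^*\pi_*E\cong\bigoplus_{g\in G} g^*E$ (Lemma~\ref{lem:galconj}), so $\operatorname{End}_X(\pi_*E)\otimes_k L\cong\prod_{g\in G/H} M_{|H|}(D_g)$ is semisimple, forcing $\operatorname{End}_X(\pi_*E)$ to be semisimple; transitivity of $G$ on the central idempotents shows its center is a field, hence $\operatorname{End}_X(\pi_*E)\cong M_n(D)$ for a single division $k$-algebra $D$. The exceptional object on $X$ is then the image of a primitive idempotent, a direct summand $F$ of $\pi_*E$. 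In the conic example this yields $\pi_*\mathcal{O}(1)\cong\mathcal{F}$ with $\operatorname{End}(\mathcal{F})\cong\mathbb{H}$ and $\pi^*\mathcal{F}\cong\mathcal{O}(1)^{\oplus 2}$; the multiplicity~$2$ here is precisely where the Brauer obstruction is absorbed. Your sketch can be repaired by replacing the descent of $F_{O_i}$ with this pushforward-and-split argument.
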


We highlight one corollary of a positive answer to
Question~\ref{quest:main}. Arithmetic toric varieties are also studied
in \cite{MerkPan}, which focused on computing their algebraic $K$-groups
via decompositions in a certain noncommutative motivic category of
$K_0$-correspondences. They showed that for an arithmetic toric
$k$-variety $X$ with $G = \text{Gal}(k^s/k)$, the group $K_0(X_{k^s})$
is a direct summand of a \emph{permutation $G$-module} (there exists a
$\Z$-basis permuted by $G$).

\begin{question}[Merkurjev-Panin \cite{MerkPan}]\label{quest:MP}
Let $X$ be an arithmetic toric variety over $k$ and $G = \operatorname{Gal}(k^s/k)$.  Is $K_0(X_{k^s})$ always a permutation $G$-module?
\end{question}

Question~\ref{quest:main} can be viewed as a categorification of Question~\ref{quest:MP} as any such exceptional collection over $k$ immediately gives a permutation basis. 

In order to show that every toric variety has a full exceptional collection
over $\cplx$, the main tool used in \cite{Kawamata,Kawamata2} was the
minimal model program (MMP) in birational geometry.
The basic building blocks are toric stacks with Picard rank one,
which always have full strong exceptional collections of line bundles.
Indeed, runs of the MMP can be leveraged to effectively produce
exceptional collections \cite{BFK}.

Over a non-closed field, one hopes to use the Galois-equivariant MMP,
but the situation is more complicated.
The most basic building blocks in this framework are those varieties $X$
which have $\rho^G = \text{rank} (\text{Pic}(X)^G )= 1$.
Based on the results above and the hope of using the MMP in the
arithmetic situation, we ask the following
question in the vein of \cite{King,BH,CM-RFrob}:

\begin{question}\label{quest:invpicrank1}
Let $X$ be a smooth toric $k$-variety and $L/k$ a $G$-Galois splitting
field.  If $\operatorname{Pic}(X_L)^G$ is of rank 1, does $X_L$ possess
a full strong $G$-stable exceptional collection consisting of line
bundles?
\end{question}

\subsection*{Acknowledgements}

The first author was partially supported by NSF DMS-1501813. He would
also like to thank the Institute for Advanced Study for providing a
wonderful research environment. These ideas were developed during his
membership.  The first author benefited from discussions with Alicia
Lamarche.
The second author was partially supported by NSA grant
H98230-16-1-0309.
The third author would like to thank the Hausdorff Institute for their
hospitality and lively research environment during the \emph{K-theory
and related fields} trimester program. A large portion of this
manuscript was drafted during his time in Bonn.
All authors also thank Fei Xie for pointing out that, due to an editing
error, in a previous version of this paper,
Proposition~\ref{prop:surface} stated that all smooth toric surfaces
have strong collections of vector bundles instead of a not-necessarily
strong collection of sheaves as was proven.
The authors would also like to thank an anonymous referee for useful
comments.


\subsection*{Organization}

Section~\ref{sect:galdesc} treats Galois descent of exceptional
collections consisting of objects on (possibly non-toric) varieties. In
Section~\ref{section:toric}, we recall appropriate definitions of
arithmetic toric varieties and establish additional descent results
which are specific to toric varieties.  In
Section~\ref{section:minimal}, we consider a range of examples.  We begin by treating toric surfaces, followed by toric Fano 3-folds.  For toric Fano 4-folds, we give partial results. We conclude by investigating the class of centrally symmetric toric Fano varieties, including the generalized del Pezzo varieties, and handling toric varieties associated to root systems of type $A$. 


\subsection*{Notation} Throughout, $k$ denotes an arbitrary field and
$k^s$ a separable closure. A \emph{variety} is a geometrically integral
separated scheme of finite type over $k$. All our schemes will be
quasi-compact and quasi-separated. For a $k$-scheme $X$ and field
extension $L/k$, we write $X_L : = X \times _{\Spec k} \Spec L$.  If $A$
is a $k$-algebra, we write $A_L = A\otimes _{k} L$.
We use $\mathsf{D^b}(X)$ to denote the bounded derived category
$\mathsf{D^b}(\text{Coh}(X))$. For an $\O_X$-algebra $A$, we write
$\mathsf{D^b}(A)$ for the bounded derived category of complexes of
$A$-modules which are coherent $\O_X$-modules.


\section{Galois descent and exceptional collections}\label{sect:galdesc}

In this section we develop Galois descent for exceptional collections (in a generalized sense). We begin by recalling some definitions and conventions concerning structure theory of derived categories of schemes.  We then give our main descent results for $G$-stable exceptional collections (Theorem~\ref{thm:descblocks}). We complete the section by collecting a few useful consequences to be used in the sequel.


\subsection{Exceptional collections}

We give some conventions for semiorthogonal decompositions of derived categories and in particular exceptional collections.  Such collections have been widely studied over algebraically closed fields but have recently been treated in more generality \cite{AAGZ, AB, ABB, Bernardara, BSS, Elagin, Xie, Yan}.  We refer the reader to Remarks~\ref{rem:descSOD} and \ref{rem:elagin} for added details on some of these results.  

For a triangulated category $\mathsf{T}$, we use the standard notation $\text{Ext}^n_{\mathsf{T}}(A, B) = \hom _{\mathsf{T}} (A, B[n])$.  For objects $A, B $ of $\mathsf{D^b}(X)$, we use  $\text{End}_X(A)$ and $\text{Ext}_X^n(A, B)$ to denote $\text{End}_{\mathsf{D^b}(X)}(A)$ and $\text{Ext}^n_{\mathsf{D^b}(X)}(A, B)$, respectively.

\begin{defn}[see \cite{BK}]
Let $\mathsf{T}$ be a triangulated category.  A full triangulated subcategory of $\mathsf{T}$ is \emph{admissible} if its inclusion functor admits left and right adjoints.  A \emph{semiorthogonal decomposition} of $\mathsf{T}$ is a sequence of admissible subcategories $\mathsf{C}_1, ..., \mathsf{C}_s$ such that 
\begin{enumerate}
\item $\hom _{\mathsf{T}}(A_i, A_j) = 0$ for all $A_i \in \Ob (\mathsf{C}_i)$, $A_j \in \Ob (\mathsf{C}_j)$ whenever $i > j$.
\item For each object $T$ of $\mathsf{T},$ there is a sequence of morphisms $0 = T_s \to \cdots \to T_0 = T$ such that the cone of $T_i \to T_{i-1}$ is an object of $\mathsf{C}_i$ for all $i = 1,..., s$.
\end{enumerate}
We use  $\mathsf{T} = \langle \mathsf{C}_1,..., \mathsf{C}_s\rangle$ to denote such a decomposition.
\end{defn}

Particularly nice examples of semiorthogonal decompositions are given by exceptional collections, the study of which goes back to Beilinson \cite{Beilinson}.

\begin{defn}\label{def:exceptional}
Let $\mathsf{T}$ be a $k$-linear triangulated category.  An object $E$ in $\mathsf{T}$ is \emph{exceptional} if the following conditions hold:
\begin{enumerate}
\item $\text{End}_{\mathsf{T}}(E)$ is a division $k$-algebra.
\item $\text{Ext}^n_{\mathsf{T}}(E, E) = 0$ for $n \neq 0$.
\end{enumerate}
A totally ordered set $\mathsf{E} = \{E_1, ..., E_s\}$ of exceptional objects is an \emph{exceptional collection} if $\text{Ext}^n_{\mathsf{T}}(E_i, E_j) = 0$ for all integers $n$ whenever $i >j$.  An exceptional collection is $\emph{full}$ if it generates $\mathsf{T}$, i.e., the smallest thick subcategory of $\mathsf{T}$ containing $\mathsf{E}$ is all of $\mathsf{D^b}(X)$.  An exceptional collection is \emph{strong} if $\text{Ext}^n_{\mathsf{T}}(E_i, E_j) = 0$ whenever $n \neq 0$.  An \emph{exceptional block} is an exceptional collection $\mathsf{E} = \{ E_1, ..., E_s\}$ such that $\text{Ext}^n_{\mathsf{T}}(E_i, E_j) = 0$ for every $n$ whenever $i \neq j$. Given an exceptional collection $\mathsf{E} = \{E_1, ..., E_s\}$, we denote by $\langle \mathsf{E} \rangle$ the category generated by the objects $E_i$.
\end{defn}

\begin{rem}
Our notion of exceptional object generalizes the classical one, where item $(1)$ of Definition~\ref{def:exceptional} is replaced by:  $\text{End}_{\mathsf{T}}(E) = k$ \cite[$\S$2]{Bondal}.  Over algebraically or separably closed fields, these definitions agree.  Over non-closed fields, the classical definition is too restrictive to allow for the use of interesting arithmetic invariants in the study of exceptional collections on twisted forms, e.g., Brauer classes.
\end{rem}

\begin{prop}[Thm. 3.2 \cite{Bondal}]\label{prop:exctosod}
Let $X$ be a $k$-scheme with exceptional collection $\{E_1,..., E_s\}$.  If $\mathscr{E}_i $ is the category generated by $E_i$, there is a semiorthogonal decomposition $\mathsf{D^b}(X) = \langle \mathscr{E}_1,..., \mathscr{E}_s, \mathsf{A} \rangle$, where $\mathsf{A}$ is the full subcategory with objects $A$ such that $\operatorname{Hom}_X(A, E_i) = 0$ for all $i$.
\end{prop}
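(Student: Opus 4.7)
The plan is to build the semiorthogonal decomposition by iteratively peeling off each exceptional block $\mathscr{E}_i$ from $\mathsf{D^b}(X)$ on the left, so that $\mathsf{A}$ emerges as the residual right-most piece. Two ingredients will drive the argument: (i) each $\mathscr{E}_i$ is an admissible subcategory of $\mathsf{D^b}(X)$, and (ii) the given semiorthogonality $\mathrm{Ext}_X^n(E_i,E_j)=0$ for $i>j$ places $\mathscr{E}_{i+1},\ldots,\mathscr{E}_s$ inside the left orthogonal ${}^\perp\mathscr{E}_i$, so peeling off $\mathscr{E}_i$ does not disturb the later blocks.

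To establish (i), I would use the exceptionality of $E_i$: the endomorphism ring $D_i:=\text{End}_X(E_i)$ is a division $k$-algebra and higher self-Exts vanish, yielding an equivalence $\mathscr{E}_i\simeq\mathsf{D^b}(D_i\text{-mod})$ via $M\mapsto M\otimes^L_{D_i}E_i$. The functor $T\mapsto R\hom_X(E_i,T)$, valued in $\mathsf{D^b}(D_i)$, is right adjoint to the inclusion, and a left adjoint is produced similarly using that $E_i$ is perfect. This step is the main technical obstacle in the arithmetic setting: over an algebraically closed field $D_i=k$ and Bondal's original argument applies directly, but over general $k$ one has to track left- versus right-$D_i$-module structures carefully throughout the construction of the adjoints.

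With admissibility in hand, the filtration is built iteratively. For any $T\in\mathsf{D^b}(X)$, the unit of the left adjoint to $\mathscr{E}_1\hookrightarrow\mathsf{D^b}(X)$ furnishes a canonical distinguished triangle $T_1\to T\to C_1$ with $C_1\in\mathscr{E}_1$ and $T_1\in{}^\perp\mathscr{E}_1$. Since the later blocks lie in ${}^\perp\mathscr{E}_1$ by (ii), the same procedure can be carried out inside ${}^\perp\mathscr{E}_1$: peel $\mathscr{E}_2$ off $T_1$ to get $T_2\to T_1\to C_2$ with $C_2\in\mathscr{E}_2$ and $T_2\in{}^\perp\mathscr{E}_1\cap{}^\perp\mathscr{E}_2$, and so on. After $s$ steps the remainder $T_s$ lies in $\bigcap_i {}^\perp\mathscr{E}_i=\mathsf{A}$, and the assembled tower $0\to T_s\to T_{s-1}\to\cdots\to T_0=T$ with $\mathrm{cone}(T_i\to T_{i-1})\in\mathscr{E}_i$ is the filtration required by a semiorthogonal decomposition. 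The $\hom$-vanishings $\hom(\mathscr{E}_i,\mathscr{E}_j)=0$ for $i>j$ and $\hom(\mathsf{A},\mathscr{E}_i)=0$ are, respectively, the hypothesis on the collection and the definition of $\mathsf{A}$, completing the argument.
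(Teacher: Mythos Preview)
The paper does not give its own proof of this proposition: it is simply quoted as Bondal's Theorem~3.2, followed by a remark that Bondal's smoothness and projectivity hypotheses can be dropped. Your outline is exactly Bondal's argument, adapted to the generalized notion of exceptional object by tracking the $D_i$-module structures; this is the expected approach and it is correct.

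One small point worth tightening: you invoke ``$E_i$ is perfect'' to produce the left adjoint, but perfectness is not part of the hypotheses and can fail on a singular $X$. For the filtration you actually build, only the \emph{left} adjoint to each inclusion $\mathscr{E}_i\hookrightarrow\mathsf{D^b}(X)$ is used, and for that it suffices that $R\hom_X(-,E_i)$ takes bounded coherent complexes to bounded complexes of finite $D_i$-modules; this holds whenever $X$ is proper (so $\mathrm{Ext}$-groups are finite-dimensional) and $E_i$ has finite injective amplitude, or more cheaply one can just cite that an exceptional object generates an admissible subcategory in any proper setting. Since the paper's own treatment is a bare citation plus the remark that the conclusion is ``independent'' of smoothness and projectivity, you are already matching its level of detail; but if you want a self-contained argument you should replace the appeal to perfectness with one of these alternatives.
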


\begin{rem}
 The reference assumes smoothness and projectivity but the conclusion is independent of this. Note further that if $X$ admits a full exceptional collection then it is automatically smooth and proper by \cite[Propositions 3.30 and 3.31]{OrlovNCstuff}.
\end{rem}

The existence of an exceptional collection on a scheme $X$ provides a means of studying derived geometry of $X$ in purely algebraic terms.  Indeed, in such a situation, one may identify an ``underlying" $k$-algebra which is derived equivalent to $X$.  For exceptional blocks, one obtains a similar but slightly stronger fact.

\begin{prop}[Thm. 6.2 \cite{Bondal}]\label{thm:tilting}
Let $X$ be a smooth projective $k$-scheme and let $\{E_1,..., E_n\}$ be a full strong exceptional collection on $\mathsf{D^b}(X)$.  Let $\mathcal{E}  = \bigoplus E_i$ and $A = \operatorname{End}(\mathcal{E})$.  Then $\mathsf{R} \hom _{\mathsf{D^b}(X)}(\mathcal{E}, -) : \mathsf{D^b}(X) \to \mathsf{D^b}(A)$ is a $k$-linear equivalence.
\end{prop}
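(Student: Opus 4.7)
The plan is to recognize $\mathcal{E}$ as a \emph{tilting object} and apply the standard derived Morita (tilting) machinery. Two properties of $\mathcal{E}$ must be established: (i) $\mathcal{E}$ is a classical generator of $\mathsf{D^b}(X)$, and (ii) $\operatorname{Ext}^n_X(\mathcal{E}, \mathcal{E}) = 0$ for all $n \neq 0$. Fullness of the exceptional collection gives (i) directly: the smallest thick subcategory containing the $E_i$ is all of $\mathsf{D^b}(X)$, so the same holds for $\mathcal{E} = \bigoplus_i E_i$. Strongness gives (ii), since $\operatorname{Ext}^n_X(E_i, E_j) = 0$ for $n \neq 0$ whether $i < j$, $i = j$, or $i > j$ (the last by the exceptional collection hypothesis).

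Next I would set up the adjunction. Let $F = \mathsf{R}\hom_X(\mathcal{E}, -) \colon \mathsf{D^b}(X) \to \mathsf{D}(A)$, which has left adjoint $G = - \otimes^{\mathsf{L}}_A \mathcal{E}$. The unit of the adjunction evaluated at $A$ is an isomorphism, since $G(A) = \mathcal{E}$ and $F(\mathcal{E}) = A$ by (ii). Because $G$ commutes with arbitrary direct sums and shifts, and because $A$ is a classical generator of $\mathsf{D}(A)$, a standard devissage argument shows that the unit $\mathrm{id} \to FG$ is an isomorphism on the smallest localizing subcategory containing $A$, which is all of $\mathsf{D}(A)$. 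Similarly, the counit $GF \to \mathrm{id}$ is an isomorphism on $\mathcal{E}$, and by (i) and another devissage argument it is an isomorphism on all of $\mathsf{D^b}(X)$. Hence $F$ and $G$ restrict to inverse equivalences on appropriate subcategories.

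The remaining issue — and the main obstacle — is checking that $F$ lands in $\mathsf{D^b}(A)$ (not merely $\mathsf{D}(A)$) and that it is essentially surjective onto $\mathsf{D^b}(A)$. For boundedness of $F$, one uses that $X$ is smooth and projective so that $\mathsf{R}\hom_X(\mathcal{E}, -)$ has finite cohomological amplitude on coherent sheaves (and hence on $\mathsf{D^b}(X)$). For essential surjectivity onto $\mathsf{D^b}(A)$, one shows $A$ has finite global dimension: this can be read off from the equivalence $G \colon \mathsf{Perf}(A) \isomto \mathsf{D^b}(X)$ combined with smoothness of $X$, which forces $\mathsf{Perf}(A) = \mathsf{D^b}(A)$. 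Finally, $k$-linearity of $F$ is immediate from $k$-linearity of $\mathsf{R}\hom_X$, completing the equivalence $\mathsf{D^b}(X) \simeq \mathsf{D^b}(A)$.
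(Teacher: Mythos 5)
The paper does not actually prove this statement: it is quoted directly from Bondal (his Theorem~6.2), with no argument supplied, so there is no ``paper's own proof'' to line yours up against. Your reconstruction is the standard derived Morita/tilting argument and it is essentially correct. Recognizing $\mathcal{E}$ as a tilting object (classical generation from fullness, $\operatorname{Ext}$-vanishing from strongness), running the unit/counit devissage in the unbounded categories using compactness of $\mathcal{E}$, and then restricting to the bounded subcategories --- all of this is sound and matches the textbook treatment.

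The one place where your writeup is compressed to the point of risking circularity is the claim that ``$\mathsf{Perf}(A)=\mathsf{D^b}(A)$'' follows ``from the equivalence $\mathsf{Perf}(A)\simeq\mathsf{D^b}(X)$ combined with smoothness of $X$.'' That equivalence already exists whether or not $A$ has finite global dimension, so it does not by itself tell you that $\mathsf{D^b}(A)\subseteq\mathsf{Perf}(A)$. What you actually need is that $A$ has finite global dimension, and there are two honest ways to get it. One is structural and is the route Bondal takes: since $\{E_1,\ldots,E_n\}$ is an exceptional collection, $\operatorname{Hom}(E_i,E_j)=0$ for $i>j$ and $\operatorname{Hom}(E_i,E_i)$ is a division algebra, so $A=\bigoplus_{i\le j}\operatorname{Hom}(E_i,E_j)$ is a \emph{triangular} (directed) finite-dimensional algebra; a straightforward induction on the number of vertices shows such algebras have global dimension at most $n-1$. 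The other is the smoothness-transfer argument you gesture at (smooth proper dg category $\Rightarrow$ $A$ perfect over $A\otimes_k A^{\mathrm{op}}$ $\Rightarrow$ finite global dimension since $A$ is finite-dimensional), which is also valid but needs to be said explicitly rather than ``read off.'' Either way, once finite global dimension is in hand, the rest of your argument goes through as stated.
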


\begin{prop}
If $\mathsf{E} = \{E_1,..., E_s\}$ is an exceptional block with $\operatorname{End}(E_i) = D_i$, there is a $k$-algebra isomorphism $\operatorname{End}(\bigoplus E_i) \simeq D_1 \times \cdots \times D_s$, and hence a $k$-linear equivalence $\langle \mathsf{E} \rangle \simeq  \mathsf{D^b}(D_1 \times \cdots \times D_n)$.
\end{prop}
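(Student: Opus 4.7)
The plan is to dispatch the two assertions in sequence. For the ring isomorphism, I would decompose
$$\operatorname{End}\left(\bigoplus_{i=1}^s E_i\right) = \bigoplus_{i,j=1}^s \operatorname{Hom}(E_i, E_j),$$
apply the exceptional block hypothesis $\operatorname{Ext}^n(E_i, E_j) = 0$ (for all $n$ and all $i \neq j$) at $n = 0$ to kill the off-diagonal Hom-spaces, and identify the diagonal summands with the $D_i$. Multiplication is block-diagonal because any composition routed through different summands must factor through a vanishing Hom-space, so the ring structure on $\bigoplus_i D_i$ is the componentwise one, i.e., the direct product $D_1 \times \cdots \times D_s$.

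For the derived equivalence, I would first record that an exceptional block is automatically a strong exceptional collection: $\operatorname{Ext}^n(E_i, E_i) = 0$ for $n \neq 0$ by the definition of exceptional object, and $\operatorname{Ext}^n(E_i, E_j) = 0$ for all $n$ and all $i \neq j$ by the block condition. Inside $\langle \mathsf{E} \rangle$ the collection is tautologically full. Were Proposition~\ref{thm:tilting} applicable verbatim, one could simply take $\mathcal{E} = \bigoplus E_i$ as a tilting object and conclude immediately via $\mathsf{R}\operatorname{Hom}(\mathcal{E}, -)$, using the ring computation above.

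The main obstacle is that Proposition~\ref{thm:tilting} as stated requires the ambient category to be $\mathsf{D^b}(X)$ for $X$ smooth projective, whereas here $\langle \mathsf{E} \rangle$ is only an admissible subcategory. To sidestep this I would exploit the stronger block hypothesis directly: two-sided orthogonality means $\langle \mathsf{E} \rangle$ decomposes as an orthogonal direct sum $\bigoplus_{i=1}^s \mathscr{E}_i$ where $\mathscr{E}_i = \langle E_i \rangle$, and each $\mathscr{E}_i$ is equivalent to $\mathsf{D^b}(D_i)$ by the one-object tilting argument applied to the exceptional object $E_i$ (which uses only that $\operatorname{Ext}^{\neq 0}(E_i, E_i) = 0$, not any geometric hypothesis on the ambient category). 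Assembling these and using the standard identification $\mathsf{D^b}(D_1) \oplus \cdots \oplus \mathsf{D^b}(D_s) \simeq \mathsf{D^b}(D_1 \times \cdots \times D_s)$ yields the equivalence $\langle \mathsf{E} \rangle \simeq \mathsf{D^b}(D_1 \times \cdots \times D_s)$.
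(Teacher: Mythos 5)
The paper states this proposition without proof, so there is no official argument to compare against; it is presumably intended as a routine consequence of the surrounding Bondal material. Your argument is correct and, notably, is more careful than a bare citation would be. The ring computation is standard and right: off-diagonal Hom-spaces vanish by the block condition at degree zero, and the vanishing of cross-terms forces the product ring structure rather than some twisted version. Your scruple about invoking Proposition~\ref{thm:tilting} literally is well-taken --- that result is phrased for the ambient $\mathsf{D^b}(X)$ with $X$ smooth projective, whereas $\langle \mathsf{E} \rangle$ is merely an admissible subcategory --- and your workaround, decomposing $\langle \mathsf{E} \rangle = \bigoplus_i \langle E_i \rangle$ via the two-sided orthogonality of the block and then identifying each $\langle E_i \rangle$ with $\mathsf{D^b}(D_i)$ via the single exceptional object $E_i$, is exactly the right fix and uses nothing beyond what you say. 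The one hypothesis worth making explicit in the single-object step is that each $D_i = \operatorname{End}(E_i)$ is a \emph{finite-dimensional} division $k$-algebra, so that $\mathsf{D^b}(D_i)$ really is the category of bounded complexes of finite free $D_i$-modules and the fully faithful functor $M \mapsto M \otimes_{D_i} E_i$ is essentially surjective onto the thick subcategory $\langle E_i \rangle$; this finiteness holds in the setting of the paper because the $E_i$ live in $\mathsf{D^b}(X)$ for a proper $k$-scheme, so all $\operatorname{Ext}$-groups are finite-dimensional over $k$. With that recorded, your argument is complete. (As a minor typographical matter, the paper's statement writes $D_1 \times \cdots \times D_n$ where it should read $D_1 \times \cdots \times D_s$; you have corrected this silently.)
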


The object $\mathcal{E} = \oplus E_i$ of Proposition~\ref{thm:tilting} is usually called a \emph{tilting object}.  If each $E_i$ is a sheaf (resp. vector bundle), then $E$ is called a \emph{tilting sheaf} (resp. \emph{tilting bundle}).  Until recently, the theory of tilting objects has served as the main tool for extending the study of exceptional collections to non-closed fields.  The results above show that any exceptional collection gives rise to both a tilting object and a semiorthogonal decomposition, and thus the admission of such a collection is a particularly special property of a given triangulated category.  Our aim in the following subsection is to extend descent results for semiorthogonal decompositions and tilting objects to (our more general notion of) exceptional collections. We give a formal definition of tilting object for completeness.

\begin{defn}
A \emph{tilting object} for a $k$-scheme $X$ is an object $\mathcal{E}$ of $\mathsf{D^b}(X)$ which satisfies the following conditions:
\begin{enumerate}
\item $\text{Ext}_X ^n (\mathcal{E}, \mathcal{E}) = 0$ for $n > 0$.

\item $\mathcal{E}$ generates $\mathsf{D^b}(X)$.
\end{enumerate}
\end{defn}

\begin{rem}[$K$-theory and motivic decompositions]

Exceptional collections have a particularly interesting manifestation in the realm of noncommutative motives.  Indeed, an exceptional collection $\{E_1,..., E_s\}$ on a smooth projective variety $X$ yields a decomposition $U(X) \simeq \bigoplus _i U(D_i)$ of its corresponding universal additive invariant \cite[$\S$2.3]{Tabuada}, where $D_i = \text{End}(E_i)$. This defines a motivic decomposition by viewing $X$ as an object in the Merkurjev-Panin category of $K$-motives \cite{MerkPan} or Kontsevich's category of noncommutative Chow motives \cite[Thm. 6.10]{Tabuada2} via its associated dg-category of perfect complexes.

One nice consequence is that this decomposition is detected by algebraic $K$-groups \cite[Prop. 1.10]{AB} in addition to a slew of other additive invariants in the sense of Tabuada \cite[$\S$2.2]{Tabuada}. Such invariants include algebraic $K$-theory with coefficients, homotopy $K$-theory, \'{e}tale $K$-theory, (topological) Hochschild homology, and (topological) cyclic homology.
\end{rem}


\subsection{Galois descent} We develop Galois descent for exceptional
collections consisting of objects in the derived category
$\mathsf{D^b}(X)$ of a (smooth projective) variety $X$.  Throughout this
section, pushforward and pullback functors are understood to be derived.
For a $k$-scheme $X$ and finite Galois extension $L/k$, any element $g
\in \text{Gal}(L/k)$ defines a morphism of $k$-schemes $g: X_L \to X_L$
which in turn defines the functor $g^*: \mathsf{D^b}(X_L) \to \mathsf{D^b}(X_L)$.

\begin{defn}
Let $X$ be a scheme with an action of a group $G$.  A $G$-\emph{stable
exceptional collection} on $X$ is an exceptional collection $\mathsf{E}
= \{E_1, ..., E_s\}$ of objects in $\mathsf{D^b}(X)$ such that
for all $g \in G$ and $1 \leq i \leq s$ there exists $E \in \mathsf{E}$
such that $g^*E_i \simeq E$.
We say a $G$-stable exceptional collection $\mathsf{E}$
is a \emph{$G$-orbit} if, for every pair of objects
$E,E' \in \mathsf{E}$, there exists a $g \in G$ such that
$g^*E \simeq E'$.
\end{defn}

\begin{rem}\label{rem:invariant}
A simple example of a $G$-stable exceptional collection is a
$G$-\emph{invariant} exceptional collection, i.e., an exceptional
collection $\{E_1,..., E_s\}$ such that $g^*E_i \simeq E_i$ for all $1 \leq i \leq s$.  It is often the case that toric varieties admit exceptional collections consisting of line bundles.  If it is also the case that a group $G$ acts trivially on $\text{Pic}(X)$, such a collection is automatically $G$-invariant, and hence $G$-stable (see Lemma~\ref{lem:picinv}).
\end{rem}

\begin{lem}\label{lem:collectiontoblocks}
Any $G$-stable exceptional collection may be written as a collection of $G$-stable exceptional blocks (after possibly reordering).
\end{lem}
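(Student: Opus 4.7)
\begin{sproof}
The plan is to take the $G$-orbits inside $\mathsf{E}$ as the candidate blocks, and order those blocks by the position of their smallest element in the original sequence. Concretely, fix the given order $\mathsf{E}=\{E_1,\dots,E_s\}$, partition $\mathsf{E}$ into its $G$-orbits, and reindex them $O^{(1)},\dots,O^{(r)}$ so that the minimum original index appearing in $O^{(t)}$ is strictly increasing in $t$. Each orbit is tautologically $G$-stable, so what must be checked is the exceptional-block property within each orbit and the exceptional-collection property of the resulting block sequence.

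First I would show that each orbit $O=\{E_{a_1},\dots,E_{a_m}\}$ with $a_1<\cdots<a_m$ is an exceptional block, i.e.\ that $\text{Ext}^n(E_{a_i},E_{a_j})=0$ for every $n$ whenever $i\neq j$. The case $i>j$ is immediate from the original exceptional collection, so the content is in the case $i<j$. Here I would use that each $g\in G$ acts as an auto-equivalence of $\mathsf{D^b}(X)$ and hence preserves $\text{Ext}$ groups. Since $O$ is a single $G$-orbit, some $g\in G$ satisfies $g^*E_{a_j}\simeq E_{a_1}$; because $g^*$ permutes $O$ and $E_{a_i}\neq E_{a_j}$, its image $g^*E_{a_i}\simeq E_{a_k}$ has $k\neq 1$, so $a_k>a_1$. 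Consequently
\[
\text{Ext}^n(E_{a_i},E_{a_j})\simeq\text{Ext}^n(g^*E_{a_i},g^*E_{a_j})\simeq\text{Ext}^n(E_{a_k},E_{a_1})=0
\]
for every $n$, by exceptionality of the original collection.

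Next I would verify that the sequence obtained by concatenating $O^{(1)},\dots,O^{(r)}$ in the chosen order (each written in its inherited order) is again an exceptional collection. Take elements $F_\alpha,F_\beta$ of the new sequence with $\alpha>\beta$. If they lie in a common orbit, the block property just established yields $\text{Ext}^n(F_\alpha,F_\beta)=0$ for all $n$. Otherwise $F_\alpha\in O^{(i)}$ and $F_\beta\in O^{(j)}$ with $i>j$; writing $F_\alpha=E_a$, $F_\beta=E_b$, and denoting by $E_{p_t}$ the element of $O^{(t)}$ of smallest original index, the labeling convention forces $p_j<p_i\leq a$. Choose $g\in G$ with $g^*E_b\simeq E_{p_j}$; then $g^*E_a\simeq E_c$ for some $E_c\in O^{(i)}$, necessarily with $c\geq p_i>p_j$, and $G$-equivariance gives
\[
\text{Ext}^n(E_a,E_b)\simeq\text{Ext}^n(E_c,E_{p_j})=0
\]
by the original exceptional collection property.

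The main obstacle in both halves is the same mixed case: a pair $(E_a,E_b)$ for which $a<b$ in the original order, so that the exceptional collection hypothesis does not directly yield vanishing of $\text{Ext}^n(E_a,E_b)$. The resolution is to apply $g^*$ for a well-chosen $g\in G$ to transport the pair to one in which the first index exceeds the second, exploiting $G$-equivariance of $\text{Ext}$ to preserve the value along the way.
\end{sproof}
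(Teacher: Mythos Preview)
Your proof is correct and reaches the same conclusion as the paper, but the mechanism is genuinely different. Both arguments begin by partitioning $\mathsf{E}$ into its $G$-orbits. From there, the paper proceeds by contradiction: it writes $E\leadsto E'$ when some $\text{Ext}^n(E,E')\neq 0$, and shows that a nonzero Ext within an orbit (or a two-way Ext between orbits) produces a cycle $E\leadsto g^*E\leadsto\cdots\leadsto (g^s)^*E\leadsto E$, which is incompatible with any exceptional ordering. You instead argue directly: for a troublesome pair $(E_a,E_b)$ you apply a carefully chosen $g^*$ sending one element to the minimum-index member of its orbit, so that the transported pair visibly satisfies the index inequality needed to invoke the original exceptional hypothesis. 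Your route has the small advantage of producing an explicit block ordering (by least original index) rather than merely asserting one exists; the paper's cycle argument, on the other hand, makes more transparent the underlying fact that $\leadsto$ descends to an acyclic relation on the set of orbits.
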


\begin{proof}
The decomposition of a $G$-stable exceptional collection into its $G$-orbits gives the desired exceptional blocks.  Let $\mathsf{E}$ be a $G$-stable exceptional collection and for elements $E, E' \in \mathsf{E}$, we write $E \leadsto E'$ if $\text{Ext}^n(E, E') \neq 0$ for some $n$.

 Let $\mathsf{A} \subset \mathsf{E}$ be a $G$-orbit. To see that
$\mathsf{A}$ is an exceptional block, suppose that $E  \leadsto E'$ for
$E, E' \in \mathsf{A}$. Since $\mathsf{A}$ is a $G$-orbit, $E' \simeq g^*
E$ for some $g \in G$.  Thus, $E  \leadsto g^* E$, and acting again by
$g$, we have $g^*E  \leadsto (g^2)^*E$. Since $A$ is finite, we have $E
\leadsto g^*E  \leadsto \cdots  \leadsto (g^s)^*E  \leadsto E$ for some
positive integer $s$.  Thus,
there is no ordering of the elements of $\mathsf{A}$
such that they form a subset of an exceptional collection --- a
contradiction.

If $\mathsf{B}$ is another $G$-orbit (distinct from $\mathsf{A}$), we
would like to see that these blocks can be ordered to form an
exceptional collection. We claim that for any $E \in \mathsf{A}$ and $F
\in \mathsf{B}$, one has $E  \leadsto F$ only if 
$\mathsf{A}$ precedes
$\mathsf{B}$ in the collection $\mathsf{E}$ (i.e., $\text{Ext}^n(B, A) =
0$ for all $n$ and all $A \in \mathsf{A}$, $B \in \mathsf{B}$).
To see this, assume that $E  \leadsto
F$ and $F  \leadsto E'$ for some $E' \in \mathsf{A}.$ As $\mathsf{A}$ is
a $G$-orbit, $E' \simeq g^*E$ for some $g \in G$. Hence, just as above,
we have a sequence $E  \leadsto F  \leadsto g^*E \leadsto g^* F
\leadsto \cdots  \leadsto (g^s)^* F  \leadsto E.$  Thus, there is no
ordering of the elements of $\mathsf{A}$ and $\mathsf{B}$ which forms an
exceptional collection, contradicting the exceptionality of
$\mathsf{E}$.
\end{proof}

\begin{lem}\label{lem:galconj}
Let $X$ be a Noetherian $k$-scheme, $L/k$ a finite Galois extension with group $G$, and $\pi: X_L \to X$ the natural projection map.  For any object $M $ in $\mathsf{D^b}(X_L)$ there is a natural isomorphism $\displaystyle \pi^* \pi_*( M) \simeq \bigoplus _{g\in G} g^*M.$  
\end{lem}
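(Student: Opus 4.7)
My plan is to prove this via flat base change along the fibre square for $\pi$, together with the standard identification of $L \otimes_k L$ as a product of copies of $L$ indexed by $G$.

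First I would note that since $L/k$ is finite Galois, the map $\Spec L \to \Spec k$ is finite étale, and hence so is the base change $\pi : X_L \to X$. In particular $\pi$ is flat and affine, so flat (affine) base change applies: if
\[
\begin{array}{ccc}
X_L \times_X X_L & \xrightarrow{p_2} & X_L \\
{\scriptstyle p_1}\downarrow & & \downarrow{\scriptstyle \pi}\\
X_L & \xrightarrow{\pi} & X
\end{array}
\]
is the fibre square, then there is a natural isomorphism $\pi^* \pi_* M \simeq p_{1*}\, p_2^* M$ for every $M \in \mathsf{D^b}(X_L)$. (Here I use that $\pi$ is affine and flat, so $\pi_*$ commutes with the relevant base change without any boundedness issues, and that everything is already derived.)

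Next I would identify $X_L \times_X X_L$. Writing $X_L = X \times_k \Spec L$, we have
\[
X_L \times_X X_L \;\simeq\; X \times_k \Spec(L \otimes_k L).
\]
The Galois property of $L/k$ gives the familiar $k$-algebra isomorphism
\[
L \otimes_k L \;\isomto\; \prod_{g \in G} L, \qquad a \otimes b \longmapsto (a \cdot g(b))_{g \in G},
\]
so $X_L \times_X X_L \simeq \coprod_{g \in G} X_L$. Under this identification, the first projection $p_1$ restricts to the identity on each component, while the second projection $p_2$ restricts on the $g$-component to the automorphism $g : X_L \to X_L$. This is the main bookkeeping step, and will be the point I expect to require the most care --- one must check that the chosen isomorphism $L \otimes_k L \simeq \prod_G L$ is compatible with the convention by which elements of $G$ act as $k$-scheme automorphisms of $X_L$ (a short, unavoidable diagram chase in algebra).

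Having set up this identification, the conclusion is immediate. On the $g$-th component, $p_2^* M \simeq g^* M$, and then $p_{1*}$, being pushforward along a disjoint union of copies of the identity, simply takes the direct sum:
\[
p_{1*}\, p_2^* M \;\simeq\; \bigoplus_{g \in G} g^* M.
\]
Combining this with the base-change isomorphism $\pi^* \pi_* M \simeq p_{1*}\, p_2^* M$ yields the desired natural isomorphism. Naturality in $M$ is inherited from naturality of base change and of the decomposition above.
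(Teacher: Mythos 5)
Your proof is correct, and it takes a somewhat cleaner, more categorical route than the paper's. The paper first constructs the natural map by hand: for each $g \in G$ it uses the identification $\pi_\ast M \simeq \pi_\ast g^\ast M$ together with $(\pi^\ast,\pi_\ast)$-adjunction to produce a transformation $\pi^\ast\pi_\ast \to g^\ast$, sums over $G$, and then verifies this map is an isomorphism by reducing to an affine patch $\Spec R$, choosing a finite presentation of the module $M$ over $R_L$, and checking on free modules (where the statement is exactly the algebra fact $R_L \otimes_R R_L \simeq \prod_{g\in G} R_L$ with the appropriate $g$-twists). You instead invoke flat (affine) base change along the fibre square for $\pi$ and then identify $X_L \times_X X_L$ with $\coprod_{g\in G} X_L$, from which the conclusion falls out of the factorization $p_{1\ast} p_2^\ast$. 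Both arguments ultimately rest on the same algebraic input, namely $L \otimes_k L \simeq \prod_{g\in G} L$, but yours bundles the "verify it's an isomorphism" step into a citation of base change, while the paper does that verification explicitly by reducing to free modules. Your approach is arguably more transparent about where the isomorphism comes from; the paper's has the minor advantage of not needing to state the derived base-change theorem, since it never leaves the underived/component-wise setting (justified by $\pi$ being flat and affine). You are also right to flag the one place requiring care: the compatibility between the chosen isomorphism $L \otimes_k L \simeq \prod_G L$ and the convention for the $G$-action on $X_L$, which is exactly the bookkeeping the paper handles by writing the components as graphs $\Gamma_g(L) \subset L \otimes_k L$.
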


\begin{proof}

As $\pi$ is flat and affine, every coherent sheaf on $X$ is acyclic for $\pi^\ast$ and every coherent sheaf on $X_L$ is acyclic for $\pi_\ast$. Hence, the derived functors coincide with the application of $\pi^\ast$ or $\pi_\ast$ component-wise to a complex. Thus, it suffices to establish a natural isomorphism at the level of coherent sheaves. 

For any object $M$ of $\text{Coh}(X_L)$, we have $ \pi_*M \simeq \pi_*
g^* M $, and adjunction yields a natural transformation $ \pi^* \pi_*
\to g^*$.  Summing over all $g \in G$ provides the transformation
$\alpha:   \pi^* \pi_* \to \oplus g^* $. We show this is an
isomorphism.

It suffices to check that $\alpha$ is an isomorphism on any affine patch, $\operatorname{Spec} R$, of $X$. Passing to modules, we abuse notation and let $M$ be a finitely-generated module over $R_L = R \otimes_k L$. Choose a presentation of $M$
\begin{displaymath}
 R_L^{\oplus m} \to R_L^{\oplus n} \to M \to 0
\end{displaymath}
and evaluate $\alpha$ on the sequence to get the commutative diagram 
\begin{center}
 \begin{tikzpicture}
  \node at (-4,0.75) (pr1) {$R^{\oplus m} \otimes_k \left( L \otimes_k L \right)$};
  \node at (-4,-0.75) (gr1) {$R^{\oplus m} \otimes_k \left( \oplus_g \Gamma_g(L) \right)$};
  \node at (0,0.75) (pr0) {$R^{\oplus n} \otimes_k \left( L \otimes_k L\right) $};
  \node at (0,-0.75) (gr0) {$R^{\oplus m} \otimes_k \left( \oplus_g \Gamma_g(L)\right) $};
  \node at (3,0.75) (pm) {$M \otimes_R R_L$};
  \node at (3,-0.75) (gm) {$\oplus_g g^\ast M$};
  \node at (5,0.75) (p0) {$0$};
  \node at (5,-0.75) (g0) {$0$};
  \draw[->] (pr1) -- (pr0);
  \draw[->] (pr0) -- (pm);
  \draw[->] (pm) -- (p0);
  \draw[->] (gm) -- (g0);
  \draw[->] (gr1) -- (gr0);
  \draw[->] (gr0) -- (gm);
  \draw[->] (pr1) -- node[left] {$\alpha_{R^{\oplus m}}$} (gr1);
  \draw[->] (pr0) -- node[left] {$\alpha_{R^{\oplus n}}$} (gr0);
  \draw[->] (pm) -- node[left] {$\alpha_M$} (gm);
 \end{tikzpicture}
\end{center}
where $\Gamma_g(L)$ denotes the graph of $g$ in $L \otimes_k L$. The left and middle maps are isomorphisms, so the right map must also be an isomorphism. 
\end{proof}

\begin{prop}[Descent for orbits]\label{prop:objblockdescent}
Let $X$ be a $k$-scheme, $L/k$ a finite $G$-Galois extension,
and $\pi: X_L \to X$ the natural projection map. If $\mathsf{E} = \{E_1,
\ldots, E_s\}$ is a $G$-orbit forming an exceptional collection on $X_L$,
and if $E$ is any element of $\mathsf{E}$, then there is an exceptional
object $F$ in $\mathsf{D^b}(X)$ such that $\pi_*E \simeq F^{\oplus m}$
and $\pi^*F$ generates the category $ \langle \mathsf{E} \rangle$.
\end{prop}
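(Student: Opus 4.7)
My plan is to build $F$ as an indecomposable direct summand of $\pi_* E$. The key is to show that the endomorphism algebra $A := \operatorname{End}(\pi_* E)$ is a simple algebra. First, I would apply Lemma~\ref{lem:galconj} to obtain $\pi^* \pi_* E \simeq \bigoplus_{g \in G} g^* E$. Setting $H := \operatorname{Stab}_G(E)$, since $\mathsf{E}$ is a $G$-orbit the summands organize into $|H|$ copies of each $E_i$, giving $\pi^* \pi_* E \simeq (\bigoplus_{i=1}^s E_i)^{\oplus |H|}$, which already generates $\langle \mathsf{E} \rangle$.

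Combining adjunction with the fact that $\mathsf{E}$ forms an exceptional block (by Lemma~\ref{lem:collectiontoblocks}), I would then compute
\[
\operatorname{Ext}^n(\pi_* E, \pi_* E) \simeq \bigoplus_{g \in G} \operatorname{Ext}^n(g^* E, E).
\]
Terms with $g \notin H$ vanish in every degree by the block property, while terms with $g \in H$ contribute $D := \operatorname{End}(E)$ in degree zero only. This yields $\operatorname{Ext}^n(\pi_* E, \pi_* E) = 0$ for $n \neq 0$ and $\dim_k A = |H| \cdot \dim_k D$.

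The main obstacle will be to identify $A$ as a simple algebra, so that a decomposition $\pi_* E \simeq F^{\oplus m}$ for a single indecomposable $F$ becomes visible. I plan to handle this by faithfully flat descent along $L/k$: the same Ext computation (together with flat base change) gives
\[
A \otimes_k L \simeq \operatorname{End}_{X_L}(\pi^* \pi_* E) \simeq \prod_{i=1}^s M_{|H|}(D),
\]
with $G$ transitively permuting the $s$ factors while each factor inherits the semilinear stabilizer action of $H$ on $M_{|H|}(D)$. Recognizing this $G$-algebra as induced from $H$, one recovers $A \cong (M_{|H|}(D))^H$, a central simple algebra over $Z(D)^H$ by standard Galois descent for CSAs. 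Artin--Wedderburn then provides $A \simeq M_m(D')$ for some division algebra $D'$.

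To conclude, I would lift a primitive idempotent in $A$ to an indecomposable summand $F$ of $\pi_* E$ with $\operatorname{End}(F) \simeq D'$; the matrix structure of $A$ forces $\pi_* E \simeq F^{\oplus m}$, and the higher Ext vanishing passes from $\pi_* E$ to $F$, so $F$ is exceptional. Comparing $\pi^* \pi_* E \simeq (\pi^* F)^{\oplus m}$ with $\pi^* \pi_* E \simeq (\bigoplus_i E_i)^{\oplus |H|}$ via Krull--Schmidt then forces $\pi^* F \simeq (\bigoplus_i E_i)^{\oplus |H|/m}$, which contains every $E_i$ and hence generates $\langle \mathsf{E} \rangle$.
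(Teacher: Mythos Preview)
Your argument is correct and follows essentially the same route as the paper: apply Lemma~\ref{lem:galconj}, use adjunction and the block property to compute $\operatorname{Ext}^\ast_X(\pi_\ast E,\pi_\ast E)$, establish that $A=\operatorname{End}_X(\pi_\ast E)$ is simple, extract $F$ via a primitive idempotent, and deduce generation from $(\pi^\ast F)^{\oplus m}\simeq \pi^\ast\pi_\ast E$. The only substantive difference is in the simplicity step: the paper first obtains semisimplicity of $A$ from semisimplicity of $A_L$ via Amitsur's theorem on the Jacobson radical under base change, and then observes that the center $Z=(Z_L)^G$ is a field because $G$ permutes the factors of $Z_L$ transitively; your ``induced from $H$, hence $A\cong (M_{|H|}(D))^H$'' argument reaches the same conclusion but is more compressed, and you should be aware that the $H$-action on $M_{|H|}(D)$ is only well-defined up to a projective $2$-cocycle (the choices of isomorphisms $h^\ast E\simeq E$ need not be coherent), so the paper's center argument is cleaner. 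Finally, the Krull--Schmidt step at the end is unnecessary (and not available for an arbitrary $k$-scheme): generation of $\langle\mathsf{E}\rangle$ by $\pi^\ast F$ follows immediately from $\langle\pi^\ast F\rangle=\langle(\pi^\ast F)^{\oplus m}\rangle=\langle\bigoplus_g g^\ast E\rangle$, exactly as the paper argues.
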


\begin{proof}
By Lemma~\ref{lem:collectiontoblocks}, exceptional $G$-orbits are
completely orthogonal (and by definition carry a transitive action of
$G$), which will be used throughout the proof. Fix an element $E \in
\mathsf{E}$, so that $E = E_i$ for some $i$. Lemma~\ref{lem:galconj}
gives $$\pi^*\pi_*E \simeq \bigoplus _{g \in G} g^*E$$  We claim that
$\text{End}(\pi_*E)$ is a matrix algebra over a division algebra, and
prove this by first showing that it is semisimple. Indeed, using
$\text{End}_X(M) \otimes _k L \simeq \text{End}_{X_L} (\pi^*M)$ for any $M
\in \mathsf{D^b}(X)$ \cite[Rem. 2.1]{AB}, we have
\[ \text{End}_X(\pi_*E) \otimes _k L \simeq
\text{End}_{X_L}(\pi^* \pi_* E) \simeq
\text{End}_{X_L}\left(\bigoplus _{ g\in G} g^*E\right).
\]  Each $g^*E$ is exceptional so that
$\text{End}_{X_L}(g^*E) =: D_g$ is a division algebra for each element $g
\in G$.  Let  $H \leq G$ be the subgroup consisting of elements $h$
satisfying $h^*E \simeq E$. For any system of coset representatives $g
\in G/H$, we have $\text{End}_X(\pi_*E)_L \simeq \prod _{g \in
G/H}M_m(D_g)$, where $m = |H|$. This product of matrix algebras over
division algebras is semisimple, i.e., the Jacobson radical
$\text{rad}(\text{End}_X(\pi_*E)_L) = 0$.  We then have $0 =
\text{rad}(\text{End}_X(\pi_*E)_L) = \text{rad}(\text{End}_X(\pi_*E))_L$
by ~\cite[Thm.~1]{Amitsur}, and hence $\text{rad}(\text{End}_X(\pi_*E))
= 0$. Thus, $\text{End}_X(\pi_*E)$ is semisimple and so must also
be a product of matrix algebras over division algebras by the
Artin-Wedderburn Theorem.  

Let $Z$ be the center of $\text{End}_X(\pi_*E)$ and $Z_L$ the center of $\text{End}_X(\pi_*E)_L$.
Note that $Z$ is an \'{e}tale $k$-algebra, and to show that $\text{End}(\pi_*E)$ is a matrix algebra, it suffices to show that $Z$ has no zero divisors, and is thus a field.  There is an embedding $Z \hookrightarrow Z_L  = \prod_{g \in G/H}  L_g$, where $L_g$ is the center of the division algebra $D_g$.  The transitive action of $G$ on $\{E_1,..., E_s\}$ implies that $G$ acts transitively on a basis of $Z_L$, so that $Z = (Z_L)^G$ has no zero divisors.

We produce the object $F$ using the identification $\text{End}_X(\pi_*E)
\simeq M_n(D)$, where $D$ is a division algebra.
Let $e_i = e_{ii}$ denote the usual idempotent matrices, so that
$\{e_i\}$ is a complete set of primitive orthogonal idempotents.
Notice that $F_i:= \im (e_i)$ is a simple $\text{End}_X(\pi_*E)$-submodule of $\pi_*E$ for each $i$, and hence $F_i \simeq F_j$ for each $i, j$, and $\text{End}_X(F_i) \simeq D$. Define $F = \im (e_1) \subset \pi_*E$, included as a direct summand.  We note that $\pi_*E \simeq \bigoplus F_i \simeq F^{\oplus n}$.  

We now show that $F$ is an exceptional object on $X$.  As stated above, $\text{End}_X(F)$ is a division algebra, so it suffices to show that $\text{Ext}^n_X(F, F) = 0$ for $ n \neq 0$.  Using Lemma~\ref{lem:galconj} and $(\pi^*, \pi_*)$-adjunction, we have $$\text{Ext}^n_X(\pi_*E, \pi_*E) = \bigoplus _{g\in G} \text{Ext}^n _{X_L}(g^*E, E).$$  For $n \neq 0$, each summand of the right-hand side is 0, which follows from the mutual orthogonality of the exceptional block $\mathsf{E}$ (when $g^*E \not\simeq E$) and from exceptionality of $E$ (when $g^*E \simeq E$).  Since $F$ is a direct summand of $\pi_*E$, it follows that $\text{Ext}^n_X(F, F)$ is a summand of $\text{Ext}^n_X(\pi_*E, \pi_*E) = 0$.

Lastly, we show that $\pi^*F$ generates the category $\langle \mathsf{E}
\rangle$.   Since $ F^{\oplus m} \simeq \pi_*E$, extending scalars to $L$
gives $ (\pi^*F)^{\oplus m} = \pi^*(F^{\oplus m}) \simeq \pi^*\pi_*E
\simeq \bigoplus g^*E$.  Thus, $$\langle \pi^*F \rangle = \langle (\pi ^*F )^{\oplus m} \rangle = \langle \bigoplus g^* E \rangle = \langle g^*E \rangle _{g \in G} = \langle \mathsf{E} \rangle.$$\end{proof}

\begin{rem}\label{rem:descSOD}
Proposition~\ref{prop:objblockdescent} provides a very specific case of descent
for triangulated categories, the main advantage of which is that it
allows one to identify a specific exceptional object that base extends
to the given orbit.
Moreover, a $G$-orbit which forms an exceptional collection consisting of vector bundles (resp. sheaves) descends to an exceptional collection consisting of vector bundles (resp. sheaves).  Compare to the following descent result for semiorthogonal decompositions, which generalizes \cite[Cor. 2.15]{Toen}.  Although this result is useful for descending semiorthogonal decompositions, it does not identify exceptional objects.

\begin{prop}[Prop. 2.12, \cite{AB}]\label{prop:ABdesc}
Let $\mathsf{T}$ be a $k$-linear triangulated category such that $\mathsf{T}_{k^s}$ is $k^s$-equivalent to $\mathsf{D^b}(k^s, (k^s)^n)$.  Then there exists an \'{e}tale algebra $K$ of degree $n$ over $k$, an Azumaya algebra $A$ over $K$, and a $k$-linear equivalence $\mathsf{T} \simeq \mathsf{D^b}(K/k, A)$.
\end{prop}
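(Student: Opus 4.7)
The plan is to produce a tilting object in $\mathsf{T}$ whose endomorphism algebra has the desired Azumaya-over-\'etale structure, and then apply Proposition~\ref{thm:tilting}. Under the hypothesis, I identify in $\mathsf{T}_{k^s}$ the $n$ simple $(k^s)^n$-modules $E_1,\dots,E_n$; these form a full exceptional collection which is in fact an exceptional block (pairwise completely orthogonal with $\operatorname{End}(E_i)=k^s$). Working over a sufficiently large finite Galois subextension $L/k$ through which the $\operatorname{Gal}(k^s/k)$-action on the set of isomorphism classes $\{[E_i]\}$ factors, I record the resulting $G$-action, where $G=\operatorname{Gal}(L/k)$.

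First, I would invoke Lemma~\ref{lem:collectiontoblocks} to partition $\{E_1,\dots,E_n\}$ into its $G$-orbits $\mathsf{E}_1,\dots,\mathsf{E}_r$, each of which is a $G$-orbit forming an exceptional block. Next, I would apply Proposition~\ref{prop:objblockdescent} to each orbit $\mathsf{E}_j$ to produce an exceptional object $F_j$ of $\mathsf{T}$ such that $\pi^*F_j$ generates $\langle\mathsf{E}_j\rangle$ and such that $\operatorname{End}_{\mathsf{T}}(F_j)$ is a central simple algebra over its center $K_j$, where $K_j$ is the fixed field of the stabilizer of any element of $\mathsf{E}_j$. Summing orbit sizes yields $\sum_j [K_j:k]=n$.

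Set $\mathcal{E}=\bigoplus_{j=1}^r F_j$, $K=\prod_j K_j$, and $A=\prod_j \operatorname{End}_{\mathsf{T}}(F_j)$. Then $K$ is an \'etale $k$-algebra of rank $n$, and $A$ is an Azumaya algebra over $K$ since each factor is central simple over $K_j$. I would then verify that $\mathcal{E}$ is a tilting object: generation of $\mathsf{T}_{k^s}$ by $\pi^*\mathcal{E}$ follows by assembling the generation statements from Proposition~\ref{prop:objblockdescent} across the orbits, and this descends to generation of $\mathsf{T}$. The vanishing $\operatorname{Ext}^{>0}_{\mathsf{T}}(F_i,F_j)=0$ for all $i,j$ follows from exceptionality of each $F_j$ together with complete orthogonality of distinct $G$-orbits (again Lemma~\ref{lem:collectiontoblocks}, applied after base change and using the $(\pi^*,\pi_*)$-adjunction together with Lemma~\ref{lem:galconj}). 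Applying the tilting equivalence of Proposition~\ref{thm:tilting} then yields $\mathsf{T}\simeq\mathsf{D^b}(A)=\mathsf{D^b}(K/k,A)$.

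The main obstacle I expect is verifying the hypotheses of the tilting theorem in this abstract setting (the reference is stated for schemes). In particular one must make sense of base change $\mathsf{T}_{k^s}$ of an abstract triangulated category, set up a Galois-equivariant structure on $\mathsf{T}_{k^s}$, and confirm that generation and $\operatorname{Ext}$-vanishing survive descent. A secondary subtlety is ensuring that the center $K_j$ produced by Proposition~\ref{prop:objblockdescent} is a field (not merely a product of fields); this is given by the transitivity of the $G$-action on $\mathsf{E}_j$ combined with the argument that $(Z_L)^G$ has no zero divisors, exactly as in the proof of Proposition~\ref{prop:objblockdescent}.
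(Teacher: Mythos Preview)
The paper does not prove this proposition; it is quoted from \cite{AB} and appears inside Remark~\ref{rem:descSOD} precisely to contrast it with the paper's own descent machinery. So there is no ``paper's proof'' to compare against.

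That said, your approach is exactly the one the surrounding remark suggests: the paragraph following Proposition~\ref{prop:ABdesc} observes that taking $\mathsf{T}=\langle F\rangle$ with $F$ produced by Proposition~\ref{prop:objblockdescent} recovers the conclusion of Proposition~\ref{prop:ABdesc} in the geometric setting, identifying $A=\operatorname{End}_X(F)$ as Azumaya over its center. Your sketch globalizes this by running Proposition~\ref{prop:objblockdescent} over each $G$-orbit and assembling the $F_j$ into a tilting object, which is entirely in the spirit of Theorem~\ref{thm:descblocks}. The obstacle you flag is the real one: Proposition~\ref{prop:objblockdescent}, Lemma~\ref{lem:galconj}, and Proposition~\ref{thm:tilting} are all stated for $\mathsf{D^b}(X)$ with $X$ a scheme, where $\pi^\ast,\pi_\ast$ and base change have concrete meaning. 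The cited result in \cite{AB} is formulated for abstract $k$-linear triangulated categories and relies on a dg-enhanced framework (\`a la To\"en) to make sense of $\mathsf{T}_{k^s}$ and descent; your sketch does not supply this, and the paper's tools as written do not either. So your argument is morally correct and matches how the paper itself relates its results to Proposition~\ref{prop:ABdesc}, but it is not a self-contained proof of the abstract statement without importing the enhanced-category machinery from \cite{AB} or \cite{Toen}.
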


\noindent Let $X$, $\mathsf{E}$, and $F$ be as in Proposition~\ref{prop:objblockdescent}, and note that taking $\mathsf{T} = \langle F \rangle$, we have $\mathsf{T}_{k^s} = \langle \pi^* F \rangle_{k^s} = \langle \mathsf{E} \rangle _{k^s}$. Since $\mathsf{E} = \{ g^*E\}_{g \in G}$ is a full exceptional collection for $\langle \mathsf{E} \rangle$, the bundle $\mathcal{E} : = \oplus (g^*E)_{k^s}$ is a tilting object for $\langle \mathsf{E}\rangle_{k^s}$.  This defines an equivalence $$ \mathsf{T}_{k^s} \simeq \langle \mathsf{E} \rangle_{k^s} \simeq \mathsf{D^b}(k^s, \text{End}(\mathcal{E})) = \mathsf{D^b}(k^s, (k^s)^n).$$ Proposition~\ref{prop:ABdesc} yields an \'{e}tale extension $K/k$, an Azumaya $K$-algebra $A$, and an equivalence $\mathsf{T} \simeq \mathsf{D^b}(K/k, A)$.  In this case, since $\mathsf{T} = \langle F \rangle$, we see that $A = \text{End}_X(F)$ is an Azumaya algebra over its center $Z$ (using the notation found in the proof of Proposition~\ref{prop:objblockdescent}), which is simply a field extension of $k$.
\end{rem}

\begin{thm}[Descent for stable collections]\label{thm:descblocks}
Let $X$ be a $k$-scheme, $L/k$ a finite $G$-Galois extension, and $\pi: X_L \to X$ the natural projection map.  If $X_L$ admits a full $G$-stable exceptional collection $\mathsf{E}$ of objects of $\mathsf{D^b}(X_L)$, then $X$ admits a full exceptional collection $\mathsf{F}$ of objects of $\mathsf{D^b}(X)$.  If $\mathsf{E}$ is strong, so is $\mathsf{F}$.  If the elements of $\mathsf{E}$ are vector bundles (resp. sheaves), the elements of $\mathsf{F}$ are vector bundles (resp. sheaves).
\end{thm}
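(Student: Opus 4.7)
\begin{sproof}

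The plan is to package $\mathsf{E}$ into its $G$-orbits and apply Proposition~\ref{prop:objblockdescent} to each in turn. By Lemma~\ref{lem:collectiontoblocks}, after reordering, $\mathsf{E} = \mathsf{E}_1 \sqcup \cdots \sqcup \mathsf{E}_r$ is a concatenation of $G$-orbits $\mathsf{E}_j$, each of which is an exceptional block, and ordered so that $\langle \mathsf{E}_1 \rangle, \ldots, \langle \mathsf{E}_r \rangle$ give a semiorthogonal decomposition of $\mathsf{D^b}(X_L)$. Applying Proposition~\ref{prop:objblockdescent} to each $\mathsf{E}_j$ produces an exceptional object $F_j \in \mathsf{D^b}(X)$, realized as a direct summand of $\pi_*E$ for some $E \in \mathsf{E}_j$, with $\pi^*F_j$ generating $\langle \mathsf{E}_j \rangle$. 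Since $\pi$ is finite, flat, and affine, $\pi_*$ carries coherent sheaves (respectively finitely generated locally free sheaves) to the same, and direct summands on a Noetherian base inherit these properties; hence the $F_j$ are sheaves (respectively vector bundles) whenever the elements of $\mathsf{E}$ are, handling the last clause of the theorem.

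Next, I would verify that $\mathsf{F} = \{F_1, \ldots, F_r\}$ is an exceptional collection using the $(\pi^*, \pi_*)$-adjunction identity
\[
\text{Ext}^n_{X_L}(\pi^*M, \pi^*N) \simeq \text{Ext}^n_X(M, \pi_*\pi^*N) \simeq \text{Ext}^n_X(M,N)^{\oplus [L:k]},
\]
where the second step uses $\pi_*\pi^*N \simeq N \otimes_k L$. In particular, $\text{Ext}^n_X(M,N) = 0$ if and only if $\text{Ext}^n_{X_L}(\pi^*M, \pi^*N) = 0$. For $i < j$ and all $n$, applying this to $(M,N) = (F_j, F_i)$ reduces the desired vanishing to the vanishing of $\text{Ext}^n_{X_L}(\pi^*F_j, \pi^*F_i)$, which holds because $\pi^*F_j \in \langle \mathsf{E}_j\rangle$ and $\pi^*F_i \in \langle \mathsf{E}_i\rangle$ lie in semiorthogonal blocks of $\mathsf{D^b}(X_L)$. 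When $\mathsf{E}$ is strong, applying the same identity to $(F_i, F_j)$ for $i \leq j$ (together with the strong exceptionality of each individual $F_j$ already provided by Proposition~\ref{prop:objblockdescent}) yields $\text{Ext}^n_X(F_i, F_j) = 0$ for $n \neq 0$, so $\mathsf{F}$ is strong.

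Fullness would follow from Proposition~\ref{prop:exctosod}: it gives a semiorthogonal decomposition $\mathsf{D^b}(X) = \langle \mathscr{F}_1, \ldots, \mathscr{F}_r, \mathsf{A} \rangle$, with $\mathsf{A}$ the common right orthogonal of the $F_j$, and it suffices to show $\mathsf{A} = 0$. For any $A \in \mathsf{A}$, the base-change identity transfers $\text{Ext}^n_X(A, F_j) = 0$ (for all $j, n$) into $\text{Ext}^n_{X_L}(\pi^*A, \pi^*F_j) = 0$; since the $\pi^*F_j$ collectively generate $\mathsf{D^b}(X_L)$ by fullness of $\mathsf{E}$, we conclude $\pi^*A = 0$, and faithful flatness of $\pi$ forces $A = 0$. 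The main conceptual step is the orbit-level descent, which is already carried out in Proposition~\ref{prop:objblockdescent}; beyond that, the argument is essentially a formal use of the adjunction identity, the semiorthogonality of blocks on $X_L$, and faithful flatness.

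\end{sproof}
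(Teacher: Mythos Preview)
Your proof is correct and follows essentially the same strategy as the paper: decompose $\mathsf{E}$ into $G$-orbits via Lemma~\ref{lem:collectiontoblocks}, descend each orbit using Proposition~\ref{prop:objblockdescent}, and then verify exceptionality, strongness, and fullness. The only notable difference is in the fullness step: the paper invokes \cite[Lem.~2.9]{ABB} (admissible subcategories that base-extend to a semiorthogonal decomposition already form one over $k$), whereas you argue directly that any $A\in\mathsf{A}$ satisfies $\pi^*A=0$ by generation on $X_L$ and hence $A=0$ by faithful flatness. Your route is more self-contained; the paper's route packages the same content as a base-change statement for semiorthogonal decompositions. For the semiorthogonality and strongness checks, the paper works on the pushforward side via $\text{Ext}^n_X(\pi_*E^i,\pi_*E^j)\simeq\bigoplus_{g\in G}\text{Ext}^n_{X_L}(g^*E^i,E^j)$ together with the fact that each $F_j$ is a summand of $\pi_*E^j$, which is dual to your pullback identity $\text{Ext}^n_X(M,N)\otimes_k L\simeq\text{Ext}^n_{X_L}(\pi^*M,\pi^*N)$; both are equally valid.
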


\begin{proof}

By Lemma~\ref{lem:collectiontoblocks}, we may write $\mathsf{E} =
\{\mathsf{E}^1,..., \mathsf{E}^s\}$ as a collection of $G$-stable
blocks, where each block is given by a $G$-orbit.
Proposition~\ref{prop:objblockdescent} then associates to each block
$\mathsf{E}^i$ an exceptional object $F_i$ on $X$, and we show that
$\mathsf{F} = \{F_1,..., F_s\}$ is a full exceptional collection on $X$.
We first show that $\text{Ext}^n_{X}(F_i, F_j) = 0$ for all $n$ whenever
$i > j$.  Let $E^i$ and $E^j$ be elements of the collections
$\mathsf{E}^i$ and $\mathsf{E}^j$, respectively.  We then have
\begin{equation}
\text{Ext}^n_X(\pi_*E^i, \pi_*E^j) \simeq
\bigoplus _{g\in G} \text{Ext}^n _{X_L}(g^*E^i, E^j).
\label{eq:1}
\end{equation}
Since
$E^i$ and $E^j$ are elements of the exceptional collection $\mathsf{E}$
and $i < j$, each summand is 0 for all $n$, so that
$\text{Ext}_X^n(\pi_*E^i, \pi_*E^j) = 0$ for all $n$.  The objects $F_i$
and $F_j$ are direct summands of $\pi_*E^i$ and $\pi_*E^j$,
respectively, and it follows that $\text{Ext}^n_X(F_i, F_j) = 0$ for all
$n$.

By Proposition~\ref{prop:exctosod}, the exceptional collection $\{ F_1,
\ldots, F_s\}$ yields a semiorthogonal decomposition
\[
\mathsf{D^b}(X) = \langle \mathscr{F}_1, \ldots, \mathscr{F}_s, \mathsf{A}\rangle,
\]
where
$\mathscr{F}_i = \langle F_i \rangle$ and $\mathsf{A}$ is the full
subcategory of objects $A$ with $\text{Hom}_{\mathsf{D^b}(X)}(A, F_i) =
0$ for all $i$.  In particular, the subcategories $\mathscr{F}_i$ are
admissible.  Extending scalars to $L$, we have $(\mathscr{F}_i)_L =
\langle \mathsf{E}^i\rangle$, as both categories are generated by
$\pi^*F$ by Proposition~\ref{prop:objblockdescent}. The exceptional collection
$\mathsf{E} =\{\mathsf{E}^1,\ldots, \mathsf{E}^s \}$ is full, hence we have
a semiorthogonal decomposition $$ \mathsf{D^b}(X_L) =  \langle
(\mathscr{F}_1)_L, \ldots, (\mathscr{F}_s)_L \rangle.$$  Since our
admissible subcategories $\mathscr{F}_i$ base extend to a semiorthogonal
decomposition, \cite[Lem. 2.9]{ABB} gives a semiorthogonal decomposition
$\mathsf{D^b}(X) = \langle \mathscr{F}_1, \ldots, \mathscr{F}_s \rangle$.
In particular, the collection $\{F_1, \ldots, F_s\}$ generates
$\mathsf{D^b}(X)$, so this collection is full.

If $\mathsf{E}$ is strong, the right side of \eqref{eq:1} vanishes for $i \neq j$ (and any $n$).  It follows exactly as above that $\text{Ext}^n_{X}(F_i, F_j) = 0$ for all $n$ when $i \neq j$, so that $\mathsf{F}$ is strong.
\end{proof}

\begin{rem}\label{rem:elagin}
Similar descent results for collections of sheaves are given by Elagin in the algebraically closed case (i.e., $k = \bar{k}$) using the framework of equivariant exceptional collections in equivariant derived categories \cite{Elagin}.  Indeed, for a variety $X$ with an action of a finite group $G$ and a $G$-invariant exceptional collection (see Remark~\ref{rem:invariant}) consisting of sheaves, this descent result is given in terms of $\alpha$-twisted representations of $G$ (see Theorem 2.2 of loc. cit.).  For a $G$-stable exceptional collection consisting of sheaves, results are in terms of coinduced twisted representations of $G$ (see Theorem 2.3 of loc. cit.).
\end{rem}

\begin{lem} \label{lem:Galconverse}
Let $X$ be a $k$-scheme and $L/k$ a finite $G$-Galois extension.  If $X$ admits an exceptional collection, then $X_L$ admits a $G$-stable exceptional collection.
\end{lem}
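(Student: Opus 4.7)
\begin{sproof}
The plan is to pull the given exceptional collection $\mathsf{F} = \{F_1,\ldots,F_s\}$ back to $X_L$ and then refine each $\pi^*F_i$ into indecomposable exceptional summands using the Wedderburn structure of its endomorphism algebra. The resulting finer collection will be $G$-stable once the Galois action on the relevant idempotents is understood.

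Writing $D_i = \text{End}_X(F_i)$ and $F_i' = \pi^*F_i$, the endomorphism algebra $\text{End}_{X_L}(F_i') \simeq D_i \otimes_k L$ is a semisimple $L$-algebra. I would decompose it via Wedderburn into simple factors with central idempotents $\{z_{i,\alpha}\}_\alpha$, each simple factor being $M_{n_{i,\alpha}}(D_{i,\alpha})$ for some division $L$-algebra $D_{i,\alpha}$; picking a primitive idempotent lying over each $z_{i,\alpha}$ yields summands $H_{i,\alpha}$ of $F_i'$ satisfying $F_i' \simeq \bigoplus_\alpha H_{i,\alpha}^{\oplus n_{i,\alpha}}$ and $\text{End}(H_{i,\alpha}) \simeq D_{i,\alpha}$. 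Exceptionality of $H_{i,\alpha}$ and semiorthogonality between $H_{i,\alpha}$ and $H_{i',\alpha'}$ for $i > i'$ both follow from the base-change identity $\text{Ext}^n_{X_L}(\pi^*A, \pi^*B) \simeq \text{Ext}^n_X(A,B) \otimes_k L$ applied to the pairs in $\mathsf{F}$, together with the fact that summands inherit vanishing of Exts. For different $\alpha,\alpha'$ within a common block $i$, orthogonality at degree zero is immediate from $z_{i,\alpha} z_{i,\alpha'} = 0$, while higher Exts vanish because $F_i'$ is itself exceptional in the derived sense.

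For $G$-stability, I would use that $\pi \circ g = \pi$ to produce a canonical isomorphism $g^*F_i' \simeq F_i'$; under this identification $g^*$ acts on $\text{End}(F_i') \simeq D_i \otimes_k L$ as the Galois action on the $L$-factor, and therefore permutes the central idempotents through an induced action $\alpha \mapsto g\cdot\alpha$. A primitive idempotent over $z_{i,\alpha}$ is sent to one over $z_{i,g\cdot\alpha}$, and because any two primitive idempotents in a given simple factor are conjugate, this yields isomorphisms $g^*H_{i,\alpha} \simeq H_{i,g\cdot\alpha}$. Ordering $\{H_{i,\alpha}\}$ lexicographically in $i$ (with any order chosen within each block) then furnishes a $G$-stable exceptional collection. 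Strongness, the sheaf/bundle property, and fullness all descend by the same summand arguments; for fullness one invokes Lemma~\ref{lem:galconj} to realize any $M \in \mathsf{D^b}(X_L)$ as a direct summand of $\pi^*\pi_*M$, which lies in $\langle \pi^*F_i\rangle = \langle H_{i,\alpha}\rangle$ whenever $\mathsf{F}$ generates $\mathsf{D^b}(X)$.

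The main obstacle is the idempotent-tracking step in the $G$-stability argument: the summand $H_{i,\alpha}$ depends on a chosen primitive idempotent which $g^*$ need not fix, so one must argue that the isomorphism class $[H_{i,\alpha}]$ is well-defined and that $g^*$ moves it to $[H_{i,g\cdot\alpha}]$. Once this conjugacy-of-primitive-idempotents argument is in place, every other step reduces to standard base change and summand manipulations.
\end{sproof}
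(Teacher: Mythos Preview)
Your approach is essentially the same as the paper's: pull back the collection along $\pi$, base-change the endomorphism algebras $D_i \otimes_k L$, apply Wedderburn to split each $\pi^*F_i$ into summands with division-algebra endomorphisms, and observe these form an exceptional collection. In fact you supply more detail than the paper does --- the published proof stops after producing the refined collection and does not explicitly verify $G$-stability, whereas you correctly trace this to the identity $\pi \circ g = \pi$ and the Galois permutation of the central idempotents; your remarks on fullness, strongness, and the sheaf/bundle case are also additions beyond what the paper records (though the lemma as stated only claims the bare exceptional collection).
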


\begin{proof}
 Let $E_1,\ldots,E_s$ be the exceptional collection on $X$ and consider $\pi^\ast E_1, \ldots, \pi^\ast E_s$ on $X_L$. To compute morphisms, we note that 
 \begin{displaymath}
  \operatorname{Hom}_{X_L} (\pi^\ast E_i , \pi^\ast E_j ) = \operatorname{Hom}_X( E_i, \pi_\ast \pi^\ast E_j) = \operatorname{Hom}_X(E_i, E_j \otimes_k L) = \operatorname{Hom}_X(E_i,E_j) \otimes_k L.
 \end{displaymath}
 This vanishes if $j > i$. Let $A_i = \operatorname{Hom}_X(E_i, E_i)$. We can split $A_i \otimes_k L$ as a product of matrix algebras over division algebras $A_{i,j} = M_{N_{i,j}}(D_{i,j})$ and correspondingly decompose
 \begin{displaymath}
  \pi^\ast E_i = \bigoplus F_{i,j}^{N_{i,j}}
 \end{displaymath}
 with 
 \begin{displaymath}
  \operatorname{Hom}_{X_L}( F_{i,j} , F_{i,j} ) = D_{i,j}.
 \end{displaymath}
 Note that $F_{i,j}$ and $F_{i,j^\prime}$ are orthogonal for $j \not = j^\prime$. Thus, we have an exceptional collection.
\end{proof}

\begin{lem}\label{lem:picinv}
Let $X$ be a $k$-scheme and $L/k$ a finite extension with Galois group $G$.  If $G$ acts trivially on $\operatorname{Pic}(X_L)$ and $X_L$ admits an exceptional collection of line bundles, then $X$ admits an exceptional collection of vector bundles.
\end{lem}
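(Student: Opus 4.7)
\begin{sproof}
The plan is to recognize that the hypothesis on the Picard group collapses the Galois orbits to singletons, reducing the situation to a direct application of Proposition~\ref{prop:objblockdescent} (the one-orbit version of Theorem~\ref{thm:descblocks}) with a small additional check to promote the descended exceptional objects to vector bundles.

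First, I would observe that since $g^\ast L_i$ and $L_i$ have the same class in $\operatorname{Pic}(X_L)$, they are isomorphic as line bundles. Hence $\{L_1,\ldots,L_s\}$ is actually a $G$-invariant exceptional collection on $X_L$, and each singleton $\{L_i\}$ is a $G$-orbit that is already an exceptional block.

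Next, I would apply Proposition~\ref{prop:objblockdescent} to each singleton orbit $\{L_i\}$ to obtain an exceptional object $F_i \in \mathsf{D^b}(X)$ with $\pi_\ast L_i \simeq F_i^{\oplus n_i}$ for some $n_i \geq 1$. Because $\pi\colon X_L \to X$ is finite locally free, $\pi_\ast L_i$ is a locally free $\mathcal{O}_X$-module of rank $[L:k]$. Direct summands of locally free coherent sheaves are locally free, so each $F_i$ is a vector bundle.

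Finally, I would verify that $\{F_1,\ldots,F_s\}$ is an exceptional collection on $X$. Individual exceptionality is provided by Proposition~\ref{prop:objblockdescent}. For semiorthogonality, when $i > j$ I would repeat the computation from the proof of Theorem~\ref{thm:descblocks}: Lemma~\ref{lem:galconj} and $(\pi^\ast,\pi_\ast)$-adjunction yield
\[
\operatorname{Ext}^n_X(\pi_\ast L_i, \pi_\ast L_j) \simeq \bigoplus_{g \in G} \operatorname{Ext}^n_{X_L}(g^\ast L_i, L_j) \simeq \bigoplus_{g \in G} \operatorname{Ext}^n_{X_L}(L_i, L_j) = 0,
\]
where the final vanishing uses the exceptionality of the given collection over $L$ together with $g^\ast L_i \simeq L_i$. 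Since the $F_i$ are direct summands of the $\pi_\ast L_i$, the vanishing descends to $\operatorname{Ext}^n_X(F_i,F_j)$. The only mild obstacle is that Theorem~\ref{thm:descblocks} is stated for full collections and so cannot be cited as a black box for this (not-necessarily-full) variant; but its proof factors as orbit-by-orbit descent plus an orthogonality check, both of which apply verbatim here.
\end{sproof}
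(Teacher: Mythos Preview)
Your proof is correct and follows essentially the same approach as the paper. The paper's own proof is a two-line argument: the collection on $X_L$ is automatically $G$-stable pointwise, so one applies Theorem~\ref{thm:descblocks}. You have unpacked this invocation more carefully, correctly flagging that Theorem~\ref{thm:descblocks} is stated for \emph{full} collections and so cannot literally be cited as a black box here; your orbit-by-orbit application of Proposition~\ref{prop:objblockdescent} together with the explicit semiorthogonality check is exactly how the proof of Theorem~\ref{thm:descblocks} proceeds, and your observation that $\pi_\ast L_i$ is locally free (hence so is its summand $F_i$) makes explicit the vector-bundle claim that Theorem~\ref{thm:descblocks} asserts but does not spell out in its proof.
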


\begin{proof}
 The collection on $X_L$ is automatically $G$-stable pointwise. Hence we can apply Theorem~\ref{thm:descblocks}. 
\end{proof}

\begin{rem}
 Note that while we may start with a collection of line bundles,
the descended collection may not consist only of line bundles.
An example of this is the real conic discussed in the introduction.
\end{rem}

\begin{lem}\label{lem:blowup}
Let $X$ be a smooth $k$-variety and $L/k$ a $G$-Galois extension.  Let $Y_1, ..., Y_s$ be a $G$-orbit of smooth transversal subvarieties of $X_L$. Let $Y_I = \cap_{i \in I} Y_i$ and let $H_I$ be the normalizer of $Y_I$. If each $Y_I$ admits a full $H_I$-stable exceptional collection, then $\tilde{X}$ admits an exceptional collection, where $\tilde{X}_L$ is the iterated blow up of $X_L$ at the $Y_i$ (in any order).
\end{lem}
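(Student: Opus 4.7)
\begin{sproof}
The plan is to build a full $G$-stable exceptional collection on $\tilde X_L$ and then invoke Theorem~\ref{thm:descblocks} to descend.

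Since $\{Y_1,\ldots,Y_s\}$ is a $G$-orbit, the union $Y := \bigcup_i Y_i \subset X_L$ is $G$-stable and, by Galois descent of closed subschemes, comes from a closed subscheme $Y_0 \subset X$. Set $\tilde X := \Bl_{Y_0} X$; transversality of the $Y_i$ ensures that $\tilde X_L$ is indeed the iterated blow-up of $X_L$ at the $Y_i$ taken in any order.

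Next, iterate Orlov's blow-up formula. Because the $Y_i$ are smooth and pairwise transversal, one obtains a semiorthogonal decomposition of $\mathsf{D^b}(\tilde X_L)$ whose pieces are indexed by subsets $I \subseteq \{1,\ldots,s\}$: for each $I$ there is a fixed multiplicity $m_I$ (depending only on the codimensions of the $Y_i$ for $i \in I$) of Fourier--Mukai copies of $\mathsf{D^b}(Y_I)$, while $I = \emptyset$ (with the conventions $Y_\emptyset = X_L$ and $H_\emptyset = G$) contributes the single copy $\mathsf{D^b}(X_L)$. The $G$-action on $X_L$ lifts to $\tilde X_L$, since the blow-up locus is $G$-invariant, and permutes these pieces according to the permutation action of $G$ on subsets $I$; in particular, an element of the stabilizer $H_I$ acts on each of the $m_I$ pieces indexed by $I$ by pullback along $g : Y_I \to Y_I$, preserving the twist index.

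Now group the pieces by $G$-orbits of subsets. For each orbit, choose a representative $I$; by hypothesis $Y_I$ carries a full $H_I$-stable exceptional collection $\mathsf{E}_I$. For each twist index separately, transporting $\mathsf{E}_I$ across a system of coset representatives of $G/H_I$ yields a $G$-stable exceptional collection whose objects span the associated $G$-orbit of SOD pieces. Concatenating these $G$-stable collections in an order refining the ambient semiorthogonal decomposition produces a full $G$-stable exceptional collection on $\tilde X_L$, and Theorem~\ref{thm:descblocks} then descends it to a full exceptional collection on $\tilde X$.

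The main technical hurdle is the $G$-equivariant form of the iterated Orlov formula: one must verify that the Fourier--Mukai embeddings of the various $\mathsf{D^b}(Y_I)$ can be chosen so that the resulting semiorthogonal decomposition is permuted by $G$ in accordance with its action on subsets of $\{1,\ldots,s\}$ (and that elements of $H_I$ act on each piece by genuine pullback along $Y_I$). The transversality hypothesis is essential both for the order-independence of $\tilde X_L$ and for the cleanness of the iterated SOD with pieces exactly of the form $\mathsf{D^b}(Y_I)$.
\end{sproof}
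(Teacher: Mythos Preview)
Your overall strategy---build a $G$-stable full exceptional collection on $\tilde X_L$ via the iterated Orlov decomposition indexed by subsets $I$, then descend by Theorem~\ref{thm:descblocks}---is exactly the paper's approach; the paper's proof is the single line ``This is an iterated application of Orlov's Theorem, see \cite[Lemma 7.2]{CT},'' and your sketch spells out what that citation encodes, including the equivariance of the Fourier--Mukai embeddings that you correctly flag as the main technical point.

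There is one genuine error in your construction of $\tilde X$ over $k$. When the $Y_i$ meet, the blow-up $\Bl_{Y_0} X$ of the descended \emph{reduced} union $Y_0$ is \emph{not} the iterated blow-up: the union $\bigcup Y_i$ is singular along the pairwise intersections, so its blow-up is not smooth and does not agree with the (smooth) iterated blow-up. Transversality guarantees order-independence of the iterated blow-up, but not that it arises from a single blow-up of the union. The fix is easy and does not touch the rest of your argument: order-independence gives a canonical model of $\tilde X_L$ on which the $G$-action on $X_L$ lifts (since $G$ merely permutes the centers), and then Galois descent for schemes produces the $k$-form $\tilde X$. Alternatively, blow up the $G$-invariant product ideal $I_{Y_1}\cdots I_{Y_s}$ rather than the union. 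Either way, once $\tilde X$ exists, your SOD-and-transport argument goes through as written.
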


\begin{proof}
 This is an iterated application of Orlov's Theorem, see \cite[Lemma 7.2]{CT}.
\end{proof}


\section{Arithmetic toric varieties}\label{section:toric}

We introduce toric varieties over arbitrary fields. Such varieties, also known as \emph{arithmetic toric varieties}, have been treated in \cite{Duncan, ELFST, MerkPan, VosKly}.  
\begin{defn}

A \emph{torus} (over $k$) is an algebraic group $T$ (over $k$) such that $T _{k^s} \simeq \gm ^n$. A torus is \emph{split} if $T \simeq \gm ^n$.  A field extension $L/k$ satisfying $T_L \simeq \gm ^n$ is called a \emph{splitting field} of the torus $T$.  Any torus admits a finite Galois splitting field. 
\end{defn}

\begin{defn}
Given a torus $T$, a \emph{toric} $T$-\emph{variety} is a normal variety with a faithful $T$-action and a dense open $T$-orbit.  A toric $T$-variety is \emph{split} if $T$ is a split torus. A \emph{splitting field} of a toric $T$-variety is a splitting field of $T$. A variety is a \emph{toric variety} if it is a toric $T$-variety for some torus $T$.
\end{defn}

\begin{defn}
Let $X$ be a toric $T$-variety whose dense open $T$-orbit contains a $k$-rational point.  Then we say $X$ is \emph{neutral} \cite{Duncan} (or a \emph{toric} $T$-\emph{model} \cite{MerkPan}).
An orbit of a split torus always has a $k$-point, so a split toric
variety is neutral; but the converse is not true in general.
\end{defn}

\begin{rem}
In what follows, we will use the term \emph{toric variety} to mean toric $T$-variety for some fixed torus $T$, even though such a variety may have a toric structure for various tori. In fact, the choice of torus does not affect our analysis of toric varieties given below, and we refer interested readers to \cite{Duncan} for such considerations.

Recall that a $k$-\emph{form} of a $k$-variety $X$ is a $k$-variety $X'$ such that $X_L \simeq X_L '$ for some field extension $L/k$.  Any $k$-form of a toric variety is a toric variety \cite{Duncan}.
\end{rem}


\subsection{The split case}  Let us begin by recalling some facts concerning toric varieties with $T \simeq \gm ^n$ (e.g., when $k = \cplx$ or $k= k^s$), which are studied in terms of combinatorial data, e.g., lattices, cones, fans.  Good references for toric varieties over $\cplx$ include \cite{Fulton, CLS}, and many results hold generally in the split case.

Let $N$ be a finitely generated free abelian group of rank $n$ and $M = \hom (N, \Z)$.  A subsemigroup $\sigma \subset N_{\real}$ is a \emph{cone} if ($\sigma ^{\vee})^{\vee} = \sigma$, where $\sigma ^{\vee} = \{ u \in M \mid u(v) \geq 0 \text{ for all } v \in \sigma\}$.  A subsemigroup $\tau$ is a \emph{face} of $\sigma$ if it is of the form $\tau = \{v \in \sigma \mid u(v) = 0 \text{ for all } u \in S \}$ for some $S \subseteq \sigma ^{\vee}$.  A cone $\sigma$ is \emph{pointed} if 0 is a face of $\sigma$, and in this case $\sigma^{\vee}$ generates $M_{\real}$.  Given a pointed cone $\sigma$, we associate the affine $k$-scheme $U_{\sigma} = \Spec k[\sigma ^{\vee}]$, and for any face $\tau \subset \sigma$ the induced map $U_{\tau} \hookrightarrow U_{\sigma}$ is an open embedding.

A \emph{fan} $\Sigma \subset N_{\real}$ is a finite collection of pointed cones such that (1) any face of a cone in $\Sigma$ is a cone in $\Sigma$ and (2) the intersection of any two cones in $\Sigma$ is a face of each.  To any fan $\Sigma$ we associate a $k$-variety $X_{\Sigma}$ which is obtained by gluing the affine schemes $U_{\sigma}$ along common subschemes $U _{\tau}$ corresponding to faces.

On the other hand, beginning with a split torus $T \simeq \gm ^n$ and toric $T$-variety $X$ with fixed embedding $T \hookrightarrow X$, we recover $M$ as the character lattice $\hom (T, \gm)$ of $T$ and $N$ as the cocharacter lattice $\hom (\gm, T)$.  The association $\Sigma \mapsto X_{\Sigma}$ defines a bijective correspondence between fans $ \Sigma \subset N_{\real}$ and toric $T$-varieties $X$ (we remind the reader that here we assume $T$ is a split torus;  in general, fans $\Sigma$ admitting an action by $\text{Gal}(k^s/k)$ are in bijection with neutral toric $T$-varieties).

Let $\Sigma(\ell)$ denote the collection of cones in $\Sigma$ of dimension $\ell$.  Let $\text{Div}_T(X)$ denote the free abelian group generated by the \emph{rays} of $\Sigma$, i.e., elements of $\Sigma (1)$.  By the Orbit-Cone Correspondence \cite[Thm. 3.2.6]{CLS}, $\text{Div}_T(X)$ is isomorphic to the group of $T$-invariant Weil divisors of $X$.  For $X$ a (split) smooth projective toric variety, we have natural identifications $\text{Pic}(X) = \text{Pic} (X_{k^s}) = \text{Cl}(X_{k^s}) = \text{Cl}(X)$ which yield an exact sequence $$0 \to M \to \text{Div}_T(X) \to \text{Pic}(X) \to 0.$$  In particular, if $X$ is of dimension $n$ and $m$ is the number of rays in $\Sigma$, the Picard rank of $X$ is $\rho = m-n$.

\begin{defn}
A variety $X$ is \emph{Fano} (resp. \emph{weak Fano}) if its anticanoncial class $-K_X$ is ample (resp. nef and big).  If $X$ is a normal variety, a Cartier $D$ divisor on $X$ is \emph{nef} (``numerically effective" or ``numerically eventually free") if $D \cdot C \geq 0$ for every irreducible curve $C \subset X$.  A divisor $D$ is \emph{very ample} if $D$ is base point free and $\varphi_D : X \to \bbP(\Gamma(X, \O_X(D))^{\vee})$ is an embedding. A divisor $D$ is \emph{ample} if $\ell D$ is very ample for some $\ell \in \Z^+$.  A line bundle $\O_X(D)$ is nef or (very) ample if the corresponding divisor $D$ is nef or (very) ample.  A Cartier divisor is \emph{numerically trivial} if $D\cdot C =0$ for every irreducible complete curve $C \subset X$.  Let $N^1(X)$ be the quotient group of Cartier divisors by the subgroup of numerically trivial divisors.  The \emph{nef cone} $\text{Nef}(X)$ is the cone in $N^1(X)$ generated by the nef divisors, and the \emph{anti-nef cone} is the cone $-\text{Nef}(X) \subset N^1(X)$. A line bundle $\O_X(D)$ is nef (ample) if $D$ is nef (ample).  
\end{defn}

\begin{prop}\label{prop:nef}
A Cartier divisor $D$ on a split proper toric variety $X$ is nef (resp. ample) if and only if $D\cdot C \geq 0$ (resp. $D\cdot C > 0$) for all torus-invariant integral curves $C \subset X$.  
\end{prop}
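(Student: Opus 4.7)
The plan is to reduce the statement to the toric description of the Mori cone, combined with Kleiman's criterion. The forward implication in both cases is immediate from the definitions: a nef (resp.\ ample) divisor has non-negative (resp.\ positive) intersection with \emph{every} irreducible complete curve, and torus-invariant integral curves are such curves.

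For the reverse implication, the key input is the well-known description of the closed cone of curves on a complete split toric variety: $\overline{\mathrm{NE}}(X)_\real$ is the rational polyhedral cone in $N_1(X)_\real$ generated by the finitely many classes $[V(\tau)]$, as $\tau$ runs over the codimension-one cones of the fan $\Sigma$ (see, e.g., the toric Kleiman criterion in \cite{CLS}). First I would recall this fact and the Orbit-Cone Correspondence identifying $V(\tau)$ with a torus-invariant integral curve when $\tau$ is a wall in $\Sigma$.

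Assuming $D\cdot C\ge 0$ for every torus-invariant integral curve, the intersection pairing $D\cdot(-)$ is non-negative on each of the finitely many generators of $\overline{\mathrm{NE}}(X)_\real$, hence non-negative on the entire Mori cone. In particular $D\cdot C\ge 0$ for every irreducible complete curve $C\subset X$, so $D$ is nef. For the ample assertion, under the strict inequality hypothesis, the linear functional $D\cdot(-)$ is strictly positive on each of the finitely many extremal rays of $\overline{\mathrm{NE}}(X)_\real$, hence strictly positive on $\overline{\mathrm{NE}}(X)_\real\setminus\{0\}$. Kleiman's ampleness criterion for complete varieties then yields that $D$ is ample.

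The only nontrivial ingredient is the toric description of the Mori cone, which is the main obstacle if one were to prove it from scratch; however, it is standard in the split toric setting and may simply be cited. A minor point to be careful about is that one really does need finite generation of the cone (polyhedrality), so that strict positivity on the generators upgrades to strict positivity on the punctured cone, making Kleiman's criterion directly applicable.
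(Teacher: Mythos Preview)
Your approach is the standard one and aligns with what the paper does: the paper simply cites Theorems~3.1 and~3.2 of \cite{Mustata} for the algebraically closed case and remarks that the arguments carry over verbatim to split toric varieties. Your outline via the polyhedral Mori cone is essentially how those references argue.

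One caveat on the ample case: you invoke ``Kleiman's ampleness criterion for complete varieties,'' but the classical Kleiman criterion---that a Cartier divisor is ample if and only if it is strictly positive on $\overline{\mathrm{NE}}(X)\setminus\{0\}$---is a theorem for \emph{projective} schemes and does not hold in general for merely proper ones. It does hold for complete toric varieties, but the usual proof goes through the combinatorics of support functions (strict convexity of $\psi_D$ is equivalent both to ampleness and to positivity on all walls), and that combinatorial argument is really the substance of the proposition itself. So your reduction is sound, but the final step should appeal either to the support-function characterization or to the toric-specific Kleiman criterion in \cite{CLS}, not to a general Kleiman statement for proper varieties. The nef half of your argument is unaffected.
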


\begin{proof}
When $k$ is algebraically closed, these are Theorems 3.1 and 3.2 of \cite{Mustata}. One can see that the arguments remain valid in the split case more generally.
\end{proof}


\subsection{The not necessarily split case}

Here we provide a ``black box''
for producing exceptional collections on arbitrary forms of toric
varieties by identifying certain special exceptional collections on
a \emph{split} toric variety.
This reduces an arithmetic question to a completely geometric question.

We begin by reviewing how to obtain arbitrary forms of toric varieties
from the split case
(see, for example, \cite{Vos82Projective,ELFST}).
Let $T$ be the split torus of a split smooth projective toric variety
$X$ with fan $\Sigma$ in the space $N \otimes \R$ associated to the
lattice $N$.
Let $\Aut(\Sigma)$ denote the subgroup of elements $g \in \operatorname{GL}(N)$
such that $g(\sigma) \in \Sigma$ for every cone $\sigma \in \Sigma$.
There is a natural inclusion of $T \rtimes \Aut(\Sigma)$
into $\Aut(X)$ as the subgroup leaving the open orbit $T$-invariant.

Let $k^s$ be the separable closure of $k$.
The Galois cohomology set $H^1(k^s/k,\Aut(X)(k^s))$ is in bijective
correspondence with the $k$-forms of $X$.
The natural map
\[ H^1(k^s/k, T(k^s) \rtimes \Aut(\Sigma)) \to H^1(k^s/k, \Aut(X)(L)) \]
in Galois cohomology is surjective;
the failure of this map to be a bijection amounts to the fact that there
may be several non-isomorphic toric variety structures
on the same variety (see \cite{Duncan} for more details).

Suppose $X'={}^\gamma X$ is a twisted form of a split toric variety
for a cocycle $\gamma$ representing a class in
$H^1(k^s/k, T(k^s) \rtimes \Aut(\Sigma))$.
There is a ``factorization'' $X'={}^\alpha({}^\beta X)$
where $\beta$ represents a class in $H^1(k^s/k, \Aut(\Sigma)$
and $\alpha$ represents a class in $H^1(k^s/k, ({}^\beta T)(k^s) )$.
Informally, $\beta$ changes the torus that acts on $X$,
while $\alpha$ changes the torsor of the open orbit in $X$.

Suppose $X$ is a toric $T$-variety.
We say that an object $E \in \mathsf{D^b}(X)$ is \emph{$T$-equivariant}
if $E$ is in the image of the forgetful functor from
$\mathsf{D^b}(\operatorname{Coh}_T(X))$
(see \S{2}~of~\cite{BFK2}).
In particular, this implies that $t^\ast E \simeq E$ for all $t \in
T(k)$.

\begin{prop} \label{prop:blackbox}
Let $X$ be a split toric $T$-variety over a field $k$ and let $\Sigma$
be the associated fan.  Suppose that $X$ admits an
$\Aut(\Sigma)$-stable full exceptional collection $\mathsf{E}$
such that each object is $T$-equivariant.
Then any $k$-form $X'$ of $X$ admits a full exceptional collection
$\mathsf{E'}$.
Moreover, $\mathsf{E'}$ is strong (resp. consists of vector bundles,
consists of sheaves) as soon as $\mathsf{E}$ is strong
(resp. consists of vector bundles, consists of sheaves).
\end{prop}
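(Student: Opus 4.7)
\begin{sproof}
The strategy is to reduce the statement to the Galois descent theorem for stable collections (Theorem~\ref{thm:descblocks}) by exhibiting, after extending scalars to a splitting field, a $G$-stable exceptional collection on $X'_L$ whose qualitative features match those of $\mathsf{E}$.

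First, I would pick a finite Galois extension $L/k$ with group $G = \operatorname{Gal}(L/k)$ large enough that the cocycle defining $X'$ trivializes, so that there is an $L$-isomorphism $X'_L \simeq X_L$. Under this identification, the $k$-structure on $X'$ corresponds to the standard Galois action on $X_L$ twisted by the cocycle: writing $\gamma_\sigma = t_\sigma \cdot a_\sigma \in T(L) \rtimes \Aut(\Sigma)$, the new Galois action on $X'_L$ is $\tilde{\sigma} = \gamma_\sigma \circ \sigma_{\mathrm{std}}$ for each $\sigma \in G$, acting as an $L$-automorphism composed with the standard Galois translation.

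Next, let $\mathsf{F} = \{\pi^\ast E_1,\ldots, \pi^\ast E_s\}$ be the pullback of $\mathsf{E}$ along $\pi \colon X_L \to X$. Flat base change preserves the Ext computations and the property of being full, strong, or consisting of sheaves or vector bundles, so $\mathsf{F}$ is a full exceptional collection on $X_L = X'_L$ inheriting the features of $\mathsf{E}$. The key step is to verify that $\mathsf{F}$ is $G$-stable for the twisted action $\tilde{\sigma}$. By functoriality of pullback,
\[
\tilde{\sigma}^\ast \pi^\ast E_i \;\simeq\; \sigma_{\mathrm{std}}^\ast\, a_\sigma^\ast\, t_\sigma^\ast\, \pi^\ast E_i.
\]
The $T$-equivariance of $E_i$ gives $t_\sigma^\ast \pi^\ast E_i \simeq \pi^\ast E_i$ for every $t_\sigma \in T(L)$; the $\Aut(\Sigma)$-stability of $\mathsf{E}$ gives $a_\sigma^\ast E_i \simeq E_j$ for some index $j$, and since $a_\sigma$ is defined over $k$ this commutes with $\pi^\ast$; and finally $\sigma_{\mathrm{std}}^\ast$ acts trivially on objects that descend from $X$. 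Combining these three facts yields $\tilde{\sigma}^\ast \pi^\ast E_i \simeq \pi^\ast E_j$, so $\mathsf{F}$ is a $G$-stable exceptional collection on $X'_L$.

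Finally, I would feed this $G$-stable collection $\mathsf{F}$ into Theorem~\ref{thm:descblocks} to obtain a full exceptional collection $\mathsf{E}'$ on $X'$, with strongness and the sheaf or vector bundle property transported along by the same theorem. The main difficulty is the central computation above: one must simultaneously absorb the lattice part of the cocycle, controlled by $\Aut(\Sigma)$-stability, and the continuous toric part, which is precisely what $T$-equivariance is designed to handle. Once this compatibility has been verified, the result is a formal consequence of the descent theorem of Section~\ref{sect:galdesc}.
\end{sproof}
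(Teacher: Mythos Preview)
Your overall strategy matches the paper's: pass to a finite Galois splitting field, realize the twisted Galois action as a cocycle valued in $T(L)\rtimes\Aut(\Sigma)$, check stability of the pulled-back collection under this twisted action, and then invoke Theorem~\ref{thm:descblocks}. The three-step computation with $t_\sigma^\ast$, $a_\sigma^\ast$, and $\sigma_{\mathrm{std}}^\ast$ is exactly the content of the paper's ``punchline'' sentence.

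There is, however, one genuine gap. You assert that $\mathsf{F}=\{\pi^\ast E_1,\ldots,\pi^\ast E_s\}$ is an exceptional collection on $X_L$ because ``flat base change preserves the Ext computations.'' Base change does preserve the vanishing of higher Ext groups and the semiorthogonality, but it does \emph{not} preserve condition~(1) of Definition~\ref{def:exceptional}: if $\operatorname{End}_X(E_i)=D_i$ is a nontrivial division $k$-algebra, then $\operatorname{End}_{X_L}(\pi^\ast E_i)\simeq D_i\otimes_k L$ is only semisimple, typically a product of matrix algebras, so $\pi^\ast E_i$ is not exceptional and Theorem~\ref{thm:descblocks} does not apply to $\mathsf{F}$ as written. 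The paper handles this by first passing through Lemma~\ref{lem:Galconverse}, which splits each $\pi^\ast E_i$ into its simple summands $F_{i,j}$ with $\operatorname{End}_{X_L}(F_{i,j})$ a division $L$-algebra; one then has to argue that the refined collection $\{F_{i,j}\}$ is still $(T(L)\rtimes\Aut(\Sigma))\rtimes G$-stable, which follows because the group acts by permuting the $\pi^\ast E_i$ and hence permutes their isomorphism classes of simple summands. Once you insert this decomposition step, your argument goes through and coincides with the paper's.
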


\begin{proof}
By Lemma~\ref{lem:Galconverse}, there exists a $G$-stable
exceptional collection $\mathsf{F}$ on $X_L$.
From the proof of that lemma, the objects $F$ of $\mathsf{F}$ are direct
summands of $\pi^\ast E$ for each object $E \in \mathsf{E}$,
where each isomorphism class of simple direct summand is represented
by exactly one $F$.
Since $\mathsf{E}$ is $\Aut(\Sigma)$-stable and each object
is $T$-equivariant, we may conclude that $\mathsf{F}$ is
$(T(L) \rtimes \Aut(\Sigma)) \rtimes G$-stable.

Let $X'$ be a $k$-form of $X$; there exists a finite Galois
extension $L/k$ with Galois group $G$ such that $X'_L \simeq X_L$.
From Theorem~5.1~of~\cite{Duncan}, the natural map
\[ H^1(L/k, T(L) \rtimes \Aut(\Sigma)) \to H^1(L/k, \Aut(X)(L)) \]
in Galois cohomology is surjective.
Thus, we may assume that $X' ={}^cX$ is the \emph{twist}
by a cocycle $c: G \to T(L) \rtimes \Aut(\Sigma)$.
Recall that the cocycle condition is that
$c(gh)=c(g){}^gc(h)$ for all $g,h \in G$
where ${}^gc(h)$ denotes the Galois action of $g$ on $T(L) \rtimes \Aut(\Sigma)$.

Identifying $X_L=X'_L$, twisting gives $\sigma'(g) = c(g) \sigma(g)$
where $\sigma$ is the action of $G$ induced from $X$ and
$\sigma'$ is induced from $X'$.
The punchline is that the action $\sigma'$ factors through the image of
$(T(L) \rtimes \Aut(\Sigma)) \rtimes G$ described above.
Thus the exceptional collection $\mathsf{F}$ is $G$-stable for the $X'$
action as well.
The proposition now follows by Theorem~\ref{thm:descblocks}.
\end{proof}

\begin{cor} \label{cor:toricLB}
Let $X$ be a split toric $T$-variety over a field $k$ and let $\Sigma$
be the associated fan.
If $X$ admits an $\Aut(\Sigma)$-stable full (strong) exceptional collection of
line bundles, then every $k$-form of $X$ admits a full (strong) exceptional
collection of vector bundles.
\end{cor}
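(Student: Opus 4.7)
The plan is to deduce this as a direct application of Proposition~\ref{prop:blackbox}. The hypothesis of that proposition requires an $\Aut(\Sigma)$-stable full (strong) exceptional collection whose objects are additionally $T$-equivariant, whereas the corollary only supplies an $\Aut(\Sigma)$-stable full (strong) exceptional collection of line bundles. So the single substantive step is to check that every line bundle on a split smooth projective toric variety is $T$-equivariant in the sense of \S{2}.

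For this I would use the surjection $\operatorname{Div}_T(X) \twoheadrightarrow \operatorname{Pic}(X)$ recalled in the split case: every class in $\operatorname{Pic}(X)$ is represented by a $T$-invariant Cartier divisor $D$, and the sheaf $\O_X(D)$ carries a canonical $T$-linearization coming from its description as a subsheaf of the function field with $T$-invariant local generators. Consequently every line bundle on $X$ lies, up to isomorphism, in the image of the forgetful functor from $\mathsf{D^b}(\operatorname{Coh}_T(X))$, so the $T$-equivariance hypothesis of Proposition~\ref{prop:blackbox} is automatic.

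With the hypotheses verified, Proposition~\ref{prop:blackbox} produces a full exceptional collection $\mathsf{E}'$ on any $k$-form $X'$ of $X$, which is strong whenever $\mathsf{E}$ is strong. Since line bundles are in particular vector bundles, the parenthetical clause of that proposition delivers $\mathsf{E}'$ as a collection of vector bundles. There is no genuine obstacle: the real content has already been packaged into Proposition~\ref{prop:blackbox} (and ultimately into Theorem~\ref{thm:descblocks}). The only subtlety worth flagging, as noted in the remark after Lemma~\ref{lem:picinv} and illustrated by the real conic of the introduction, is that $\mathsf{E}'$ typically does \emph{not} consist of line bundles: the combined $\Aut(\Sigma)$- and Galois-actions can fuse several line bundles on $X_L$ into a single higher-rank summand of $\pi_\ast$ of a line bundle, which is precisely why we can only conclude existence of an exceptional collection of vector bundles.
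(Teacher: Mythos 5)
Your proof is correct and follows exactly the same route as the paper: both cite that every line bundle on a split smooth toric variety is isomorphic to a $T$-equivariant one and then invoke Proposition~\ref{prop:blackbox}. You simply spell out the standard toric fact a bit more explicitly (via $T$-invariant Cartier divisor representatives), which the paper leaves to "standard results on toric varieties."
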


\begin{proof}
Recall that every line bundle is isomorphic to a $T$-equivariant line
bundle by standard results on toric varieties.
The corollary now follows by Proposition~\ref{prop:blackbox}.
\end{proof}

\begin{lem}\label{lem:flips}
 Let $X$ and $Y$ be smooth projective toric varieties over $k$. Let $G = \operatorname{Gal}(k^s/k)$. Assume we have a $K$-positive toric flip $X \dashrightarrow Y$ such that over $k^{s}$ the flipping loci $F_i$ are disjoint and permuted by $G$. Let $H_i$ be the normalizer of $F_i$. If $X_L$ admits a full $G$-stable exceptional collection and $Y_i$ admits a full $H_i$-stable exceptional collection, then $Y$ admits a full exceptional collection.
\end{lem}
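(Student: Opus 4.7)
The plan is to mirror the proof of Lemma~\ref{lem:blowup}, replacing Orlov's blow-up formula with the semiorthogonal decomposition for $K$-positive toric flips established in \cite{BFK}. First, I would extend scalars to $L$ and apply the decomposition to the toric flip $X_L \dashrightarrow Y_L$: since the flip is $K$-positive and toric, this yields a semiorthogonal decomposition of the form
\begin{displaymath}
\mathsf{D^b}(Y_L) = \langle \mathsf{D^b}(X_L), \mathcal{A}_1, \ldots, \mathcal{A}_r \rangle,
\end{displaymath}
where each admissible subcategory $\mathcal{A}_i$ is identified, via a window equivalence, with a finite sum of twisted copies of $\mathsf{D^b}(F_i)$, the twists being determined by the weights at the flipping locus.

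Second, I would verify that this decomposition is $G$-equivariant. Because the flip itself is defined over $k$ and $G$ permutes the flipping loci $F_i$ by assumption, it permutes the subcategories $\mathcal{A}_i$ through the same action, while the stabilizer $H_i$ acts on $\mathcal{A}_i$ through its action on $F_i$ together with a character accounting for the twist. This is analogous to how the torus-equivariant structure was exploited in Proposition~\ref{prop:blackbox}.

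Third, I would assemble a full $G$-stable exceptional collection on $Y_L$. Place the given $G$-stable collection on $X_L$ in the first piece; then, for each $G$-orbit among the $F_i$, take the hypothesized $H_i$-stable full exceptional collection on one representative $F_i$, transport it through the window equivalence into $\mathcal{A}_i$, and spread it across the remaining $\mathcal{A}_j$ in the orbit by applying Galois translates. Since disjointness of the $F_i$ makes the corresponding blocks mutually orthogonal in $\mathsf{D^b}(Y_L)$, the resulting concatenation is a full $G$-stable exceptional collection on $Y_L$. Applying Theorem~\ref{thm:descblocks} then descends it to a full exceptional collection on $Y$.

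The main obstacle will be verifying that the BFK window equivalences $\mathcal{A}_i \simeq \mathsf{D^b}(F_i)^{\oplus n_i}$ (up to twists) can be made Galois-equivariant in the strong sense required: not merely that $G$ permutes the $\mathcal{A}_i$ up to abstract equivalence, but that an $H_i$-stable collection on $F_i$ actually spreads to a $G$-stable collection on the full $G$-orbit of $\mathcal{A}_j$'s. This should reduce to the fact that the stacky flip diagram, the linearizations, and the window subcategories are all built from torus-equivariant data defined over $k$, so that the entire construction respects the Galois action on the fan.
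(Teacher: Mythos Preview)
Your proposal is correct and follows essentially the same route as the paper: pass to the separable closure, invoke the semiorthogonal decomposition for toric flips from \cite{BFK}, concatenate the resulting collections using disjointness of the flipping loci to obtain a $G$-stable collection on $Y_{k^s}$, and then descend via Theorem~\ref{thm:descblocks}. The paper's proof is much terser and does not spell out the Galois-equivariance of the window equivalences that you flag as the main obstacle; your more careful discussion of why the $H_i$-stable collections spread to a $G$-stable collection across the orbit is a genuine elaboration of a point the paper leaves implicit.
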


\begin{proof}
 Passing to $k^{s}$ we are free to use \cite{BFK} giving semi-orthogonal decompositions for the flip over each $Y_i$. Since the $Y_i$ are disjoint, we can concatenate these collections to get a $G$-stable collection. 
\end{proof}

\subsection{Products of toric varieties}

Recall that, given groups $G,H$ along with a homomorphism
$\rho : H \hookrightarrow S_n$,
the \emph{wreath product} $G \wr H$ is the group $G^n \rtimes H$
where $H$ acts on $G^n$ by permuting the copies of $G$.
We say a toric variety $X$ is \emph{indecomposable} if it cannot be written as
a product $X_1 \times X_2$ where $X_1$ and $X_2$ are
positive-dimensional toric varieties.

\begin{lem}\label{lem:autprod}
Suppose $Z= X_1^{n_1} \times \cdots \times X_r^{n_r}$ is a product of
proper split toric varieties $X_1, \ldots, X_r$, where $X_i \not\simeq X_j$
for $i \ne j$ and each $X_i$ is indecomposable.
Then
\[ \Aut(\Sigma) \simeq (\Aut({\Sigma_1}) \wr S_{n_1}) \times \cdots \times
(\Aut({\Sigma_r}) \wr S_{n_r}) , \]
where $\Sigma$ is the fan of $Z$ and $\Sigma_1, \ldots, \Sigma_r$ are
the fans of $X_1, \ldots, X_r$.
\end{lem}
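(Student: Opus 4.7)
The inclusion $\Aut(\Sigma) \supseteq \prod_i (\Aut(\Sigma_i) \wr S_{n_i})$ is the easy direction: the ambient lattice of $\Sigma$ is $N = \bigoplus_i N_i^{\oplus n_i}$ (where $N_i$ is the lattice of $\Sigma_i$) and $\Sigma$ itself is the product fan $\prod_i \Sigma_i^{n_i}$, i.e., its cones are products of cones from each factor. An element on the right-hand side acts on $N$ by permuting the $n_i$ copies of $N_i$ and applying fan automorphisms of $\Sigma_i$ componentwise; such an action clearly preserves $\Sigma$.

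For the reverse inclusion, the task is to recover the factorization from $\Sigma$ intrinsically: I would show that any $\phi \in \Aut(\Sigma)$ must permute the sublattice summands $N_i^{\oplus n_i}$ of $N$ and, because $X_i \not\simeq X_j$ for $i \neq j$ (so their fans are non-isomorphic), must preserve each isotypic summand $N_i^{\oplus n_i}$ as a whole. The key is a Krull--Schmidt type statement: the decomposition of a proper split smooth toric variety into indecomposable toric factors is unique up to reordering of isomorphic factors. Concretely, I would characterize the summand sublattices intrinsically as the maximal saturated sublattices $N' \subseteq N$ for which $\Sigma$ factors as $\Sigma' \times \Sigma''$ with $\Sigma'$ a complete fan in $N'_{\R}$ that is itself not a non-trivial product. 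Given two such decompositions, one shows they coincide up to reordering by projecting each factor of one decomposition to the factors of the other and invoking indecomposability.

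Once uniqueness is established, $\phi$ induces a permutation of the set of indecomposable summand fans, and this permutation preserves the partition by isomorphism type. Fixing an index $i$, the restriction of $\phi$ to $N_i^{\oplus n_i}$ permutes the $n_i$ copies of $N_i$ according to some $\sigma_i \in S_{n_i}$, and up to that permutation acts as an element of $\Aut(\Sigma_i)^{n_i}$ on the individual summands; this packages exactly into an element of $\Aut(\Sigma_i) \wr S_{n_i}$. Assembling across $i$ produces the desired factorization.

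The main obstacle is the uniqueness (Krull--Schmidt) step, since the inequality of dimensions or Picard ranks across the $X_i$ does not immediately rule out a hidden ``diagonal'' decomposition. I would handle this by working entirely at the level of fans: a non-trivial product decomposition of $\Sigma$ corresponds to a non-trivial splitting $N = N' \oplus N''$ such that every cone of $\Sigma$ lies in either $N'_\R$ or $N''_\R$ after a permissible factoring, and the finiteness and combinatorial rigidity of the ray set (together with indecomposability of each $\Sigma_i$) forces any two such splittings to be compatible.
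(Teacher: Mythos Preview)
Your approach is essentially the same as the paper's: both reduce the computation of $\Aut(\Sigma)$ to a Krull--Schmidt statement asserting that the decomposition of $\Sigma$ into indecomposable factors is unique up to reordering, after which the wreath-product description is immediate. The paper makes the key step more concrete than your sketch: given $Z = X_1 \times X_2$ and another decomposition $Z = Y \times W$ with $Y$ indecomposable, it uses the fact that every face of a product cone $\sigma_1 \times \sigma_2$ is itself a product $\sigma_1' \times \sigma_2'$ of faces, so that $\Sigma_Y$ (viewed inside $\Sigma$) is closed under projecting to each factor and hence is itself the product of $\Sigma_Y \cap \Sigma_1$ and $\Sigma_Y \cap \Sigma_2$; properness then gives the corresponding splitting of $N_Y \otimes \R$, and indecomposability forces one of the two pieces to be trivial. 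Your final paragraph gestures at this but the phrase ``every cone of $\Sigma$ lies in either $N'_\R$ or $N''_\R$ after a permissible factoring'' is not the right formulation---cones of a product fan are products of cones, not cones lying in one summand---so you should replace that description with the face-of-a-product observation to close the argument cleanly.
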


\begin{proof}
First, consider $Z=X_1 \times X_2$ where
$X_1,X_2$ are proper split toric varieties.
Let $N$ (resp. $N_1, N_2$) be the cocharacter lattice and
$\Sigma$ (resp. $\Sigma_1, \Sigma_2$) be the fan of
$Z$ (resp. $X_1, X_2$).
Here $N = N_1 \oplus N_2$ and $\Sigma$ is the set of cones
of the form $\sigma_1 \times \sigma_2$ where $\sigma_1 \in \Sigma_1$
and $\sigma_2 \in \Sigma_2$.
The faces of a cone $\sigma_1 \times \sigma_2$ are precisely
the cones of the form $\sigma_1' \times \sigma_2'$ where
$\sigma_1'$ is a face of $\sigma_1$ and $\sigma_2'$ is a face of
$\sigma_2$.
The fan $\Sigma_1$ can be canonically identified with the subfan of
$\Sigma$ via the bijection $\sigma \mapsto \sigma \times \{0\}$.

Now, suppose also that $Z= Y \times W$ is a product of proper split
toric varieties where $Y$ is indecomposable.  Let $\Sigma_Y$ be the fan
of $Y$, which we can canonically identify with a subfan of $\Sigma_Z$.
Every cone of $Y$ is of the form $\sigma_1 \times \sigma_2$ where
$\sigma_1 \in \Sigma_1$ and $\sigma_2 \in \Sigma_2$.
Since fans are closed under taking faces, $\sigma_1 \times \{0\}$
and $\{0\} \times \sigma_2$ are also cones in $\Sigma_Y$.
Thus every cone in $\Sigma_Y$ is a product of cones in the intersections
$\Sigma_Y \cap \Sigma_1$ and $\Sigma_Y \cap \Sigma_2$.

In particular, since $X$ is proper, we have that the space
$N_Y \otimes \R$ is the direct sum of
$(N_Y \otimes \R) \cap (N_1 \otimes \R)$ and
$(N_Y \otimes \R) \cap (N_2 \otimes \R)$,
and $\Sigma_Y$ is a product of the fans $\Sigma_Y \cap \Sigma_1$
and $\Sigma_Y \cap \Sigma_2$.
Since $Y$ is indecomposable, one of these fans is indecomposable
and $\Sigma_Y$ must be a subfan of either $\Sigma_1$ or $\Sigma_2$.

Returning to the general case, we conclude that the decomposition
$\Sigma= \Sigma_1^{n_1} \times \cdots \times \Sigma_r^{n_r}$
is unique up to ordering.
The description of the automorphism group is immediate.
\end{proof}

\begin{lem}\label{lem:stabprod}
Let $Z$ be a proper toric $k$-variety with splitting field $L/k$.
Suppose $Z_L = \prod_{i=1}^n X_i$ where each $X_i$ is an indecomposable
split proper toric $L$-variety admitting a full (strong)
$\Aut(\Sigma_i)$-stable exceptional collection of line bundles,
where $\Sigma_i$ is the fan of $X_i$.
Then $Z$ has a full (strong) exceptional collection of vector bundles.
\end{lem}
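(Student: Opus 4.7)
\begin{sproof}
The plan is to apply Corollary~\ref{cor:toricLB}. Let $\Sigma$ denote the fan of $Z_L$, and let $X = X_\Sigma$ be the split toric $k$-variety with that fan. Then $Z$ is a $k$-form of $X$, since both become isomorphic after base change to $L$. Thus it suffices to produce a full (resp.\ strong) $\Aut(\Sigma)$-stable exceptional collection of line bundles on $X$ itself.

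First I would group the factors by isomorphism class, writing $Z_L \simeq Y_1^{m_1} \times \cdots \times Y_r^{m_r}$ with the $Y_j$ pairwise non-isomorphic and indecomposable. By Lemma~\ref{lem:autprod},
\[
\Aut(\Sigma) \simeq \prod_{j=1}^{r} \bigl(\Aut(\Sigma_{Y_j}) \wr S_{m_j}\bigr),
\]
so that understanding the action on a candidate collection reduces to understanding the action of each $\Aut(\Sigma_{Y_j})$ on the factor $Y_j$ together with permutations of identical factors. Since $\Pic$ of a split toric variety is determined by the fan, the hypothesized $\Aut(\Sigma_i)$-stable line bundle collections on the factors $X_i$ transport canonically to the corresponding factors of the split $k$-model $X$.

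Next I would build the collection on $X$ by external tensor products. If $\mathsf{E}_i = \{L_{i,1}, \ldots, L_{i,s_i}\}$ denotes the collection on the $i$-th factor, set
\[
\mathsf{E} = \bigl\{ L_{1,a_1} \boxtimes \cdots \boxtimes L_{n,a_n} \,\big|\, 1 \le a_i \le s_i \bigr\},
\]
totally ordered lexicographically by the tuple $(a_1,\ldots,a_n)$. The K\"unneth formula gives
\[
\operatorname{Ext}^\bullet\bigl(\boxtimes_i L_{i,a_i},\, \boxtimes_i L_{i,b_i}\bigr) \simeq \bigotimes_i \operatorname{Ext}^\bullet(L_{i,a_i}, L_{i,b_i}),
\]
from which exceptionality, fullness (since the factors generate) and the strong property all propagate from the factors. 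The stability check is then direct: each copy of $\Aut(\Sigma_{Y_j})$ inside $\prod_j \Aut(\Sigma_{Y_j})^{m_j}$ acts on a single tensor slot and by hypothesis permutes the isomorphism classes of $\mathsf{E}_{Y_j}$, while each $S_{m_j}$ permutes slots occupied by isomorphic factors and simply reindexes tensor positions in $\mathsf{E}$.

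Once $\mathsf{E}$ is verified to be $\Aut(\Sigma)$-stable on $X$, Corollary~\ref{cor:toricLB} produces a full (resp.\ strong) exceptional collection of vector bundles on every $k$-form of $X$, including $Z$. The main technical point will be the bookkeeping for the wreath-product action on the lexicographically ordered external product, but since $\Aut(\Sigma)$-stability only requires $g^\ast L \simeq L'$ for some $L' \in \mathsf{E}$ (rather than preservation of the ordering), this step should be routine.
\end{sproof}
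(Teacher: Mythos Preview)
Your approach is essentially identical to the paper's: group the factors by isomorphism type, invoke Lemma~\ref{lem:autprod} to identify $\Aut(\Sigma)$ with the product of wreath products, take the exterior product collection, verify it is $\Aut(\Sigma)$-stable, and descend via Corollary~\ref{cor:toricLB}. The one point the paper makes explicit that you leave slightly implicit is that one must fix a \emph{single} collection for each isomorphism class $Y_j$ (rather than possibly different collections on different copies of $Y_j$), since otherwise the $S_{m_j}$-permutation of identical factors need not preserve $\mathsf{E}$; your stability argument already presumes this, so just make the choice explicit.
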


\begin{proof}
It is a well known that the exterior product collection is an
exceptional collection.
For each isomorphism class among the $X_i$ fix a full (strong)
$\Aut(\Sigma_{X_i})$-stable exceptional collection of line bundles.
This ensures that the exterior product collection is
stable under the action of
$(\Aut(\Sigma_{X_1}) \wr S_{a_1}) \times \cdots \times (\Aut(\Sigma_{X_r})
\wr S_{a_r})$.
Since this group is $\Aut(\Sigma)$ by Lemma~\ref{lem:autprod},
the exterior product collection descends by Corollary~\ref{cor:toricLB}.
\end{proof}


\section{Low dimension or high symmetry}\label{section:minimal}

We provide exceptional collections for smooth toric surfaces, Fano 3-folds,
some Fano 4-folds, centrally-symmetric toric varieties, and
toric varieties corresponding to root systems of type $A$. 


\subsection{Surfaces}\label{sect:surfaces}
Here we prove that every toric surface has a full exceptional
collection.
We begin by recalling the (classical) minimal model program for surfaces
over non-closed fields.

Suppose $f : X \to X'$ is a birational morphism of smooth projective
surfaces over a field $k$.
If $k$ is separably closed, then by Proposition~5~of\cite{Coombes}
the morphism factors into a sequence
\[
X= X_0 \to X_1 \to \cdots \to X_r = X'
\]
where each morphism $X_i \to X_{i+1}$ is the blowup of a point on
$X_{i+1}$.
Over a non-closed field $k$, we can factor $f : X \to X'$ into a
sequence where each morphism $X_i \to X_{i+1}$ is defined over $k$
and is a blowup of a
(necessarily finite) Galois orbit of $k^s$-points on $X_{i+1}$.

Blowing up a point produces an exceptional curve: a smooth rational
curve with self-intersection $-1$.  By Castelnuovo's contractibility
criterion, such a curve can always be obtained as the result of a blow-up.
If one finds a skew Galois orbit of such curves on $X$, then there
exists a birational morphism $f : X \to X'$ contracting these curves.
Repetition of this procedure eventually terminates.

\begin{defn}
A \emph{minimal surface} $X$ is a smooth projective surface over a field $k$
such that every birational morphism $X \to X'$ to a smooth projective
surface $X'$ is an isomorphism.
\end{defn}

Any smooth projective surface can be obtained by iteratively blowing up
Galois orbits of separable points starting from a minimal model.
A toric variety is geometrically rational.
Minimal geometrically rational surfaces were classified by
Manin~\cite{Manin} and Iskovskikh~\cite{Iskovskikh}.
One checks that the toric surfaces in their collection are the
following (see also a direct proof in~\cite{Xie}):

\begin{lem}\label{lem:classification}
A minimal smooth projective toric surface
is a $k^s/k$-form of one of the following:
\begin{enumerate}
\item $\mathbb{P}^2$, $\Aut(\Sigma) = S_3$.
\item $\pone \times \pone$, $\Aut(\Sigma) = D_8$.
\item $\mathbb{F}_{a} = \operatorname{Proj}(\O_{\pone} \oplus \O_{\pone} (a))$,
$a \geq 2$, $\Aut(\Sigma) = C_2$.
\item $\mathsf{dP}_6 = $ del Pezzo surface of degree 6, $\Aut(\Sigma) = D_{12}$.
\end{enumerate}
\end{lem}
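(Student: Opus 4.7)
\begin{sproof}
The plan is to apply the Manin-Iskovskikh classification of minimal smooth projective geometrically rational surfaces and identify which items on that list admit a toric structure. A smooth projective toric surface is geometrically rational (it contains a dense open $\gm^2$), so the classification applies, and any $k$-minimal such surface is either a del Pezzo with $\operatorname{Pic}^{\text{Gal}}\simeq\Z$ or a conic bundle over a Brauer-Severi curve with $\operatorname{Pic}^{\text{Gal}}\simeq\Z^2$.

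I would then pass to $k^s$ and use the classical classification of split smooth projective toric surfaces as iterated toric blow-ups (at torus-fixed points) of $\mathbb{P}^2$, $\pone\times\pone$, or the Hirzebruch surfaces $\mathbb{F}_a$. On the conic bundle side, the toric examples are the forms of $\pone\times\pone$ and of $\mathbb{F}_a$ ($a\geq 1$). For $a=1$ the unique toric $(-1)$-curve is distinguished in the fan as the only toric divisor of negative self-intersection, hence is $\Aut(\Sigma)$-fixed and always contractible over $k$, ruling $\mathbb{F}_1$ out; for $a\geq 2$ there is no $(-1)$-curve at all, so every form is $k$-minimal, yielding items (2) and (3). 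On the del Pezzo side, the smooth toric del Pezzos are $\mathbb{P}^2$, $\pone\times\pone$, $\mathbb{F}_1$, the blow-up of $\pone\times\pone$ at one torus-fixed point, and $\mathsf{dP}_6$. The fourth has three toric $(-1)$-curves forming a chain, two of which are disjoint and form an $\Aut(\Sigma)$-stable skew pair; thus it is never $k$-minimal. The six toric $(-1)$-curves of $\mathsf{dP}_6$ form a regular hexagon with $\Aut(\Sigma)=D_{12}$ acting as its full dihedral symmetry, and for Galois factoring through a subgroup containing the cyclic rotation of order six no skew subset is preserved, so minimal forms exist, giving item (4). The fan automorphism groups are then computed by inspection of the planar ray configurations: symmetries of the triangle ($S_3$), the square ($D_8$), the $\mathbb{F}_a$ fan with its unique reflection fixing the $(-a)$-section ($C_2$), and the regular hexagon ($D_{12}$).

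The main obstacle is verifying that no minimal toric surface of Picard rank at least three, other than $\mathsf{dP}_6$, arises. This reduces to a combinatorial check: a smooth toric surface of Picard rank $r$ over $k^s$ has $r+2$ toric divisors, and any iterated blow-up introduces a torus-invariant $(-1)$-curve whose ray is combinatorially distinguished by the pair of adjacent rays it subdivides. One shows that any such $\Aut(\Sigma)$-orbit of toric $(-1)$-curves furnishes a Galois-stable skew subset available for contraction, unless the orbit is the full hexagonal configuration of $\mathsf{dP}_6$ where the three alternating triples and the three opposite pairs are cyclically permuted by every rotation of order at least three. Combined with the short list produced by Manin-Iskovskikh this confirms that no other possibilities occur.
\end{sproof}
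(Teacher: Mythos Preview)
Your approach is essentially the same as the paper's: invoke Manin--Iskovskikh to reduce to the del Pezzo versus conic bundle dichotomy, then identify which cases are toric. The paper's execution is more streamlined in two places. For del Pezzos, rather than listing the toric ones and ruling out $\mathbb{F}_1$ and $\dP_7$ individually, the paper observes that blowing up one or two points never yields a minimal surface and that no more than three torus-fixed points can be in general position, which immediately leaves $\PP^2$, $\pone\times\pone$, and $\dP_6$. For conic bundles, the paper notes that singular fibers must lie over torus-invariant points of the base $\pone$, hence there are at most two; a minimal conic bundle with at most two singular fibers is then either a del Pezzo (already handled) or a minimal ruled surface $\mathbb{F}_a$. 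This single observation replaces your third paragraph entirely: once you know there are at most two singular fibers, the combinatorial check that no Picard-rank-$\ge 3$ toric surface other than $\dP_6$ is minimal becomes unnecessary, since any such surface is already accounted for on the del Pezzo side.
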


\begin{proof}
A minimal geometrically rational surface is either a del Pezzo surface
or has a conic bundle structure \cite{Manin,Iskovskikh}.
Over the separable closure, a del Pezzo surface is either
$\pone \times \pone$ or a blow up of $\PP^2$ at up to $8$ points in
general position.
Blowing up only one or two points never results in a minimal surface, and
no more than three points can be simultaneously torus invariant and
in general position.  Thus every del Pezzo surface is a $k^s/k$-form
of $\PP^2$, $\PP^1 \times \PP^1$ or $\dP_6$.
Over the separable closure, a conic bundle structure has at most $2$
singular fibers since their images must be torus invariant points on the
base $\PP^1$.
A minimal conic bundle with at most two singular fibers over the
separable closure must be either a del Pezzo surface or a minimal ruled
surface.
\end{proof}

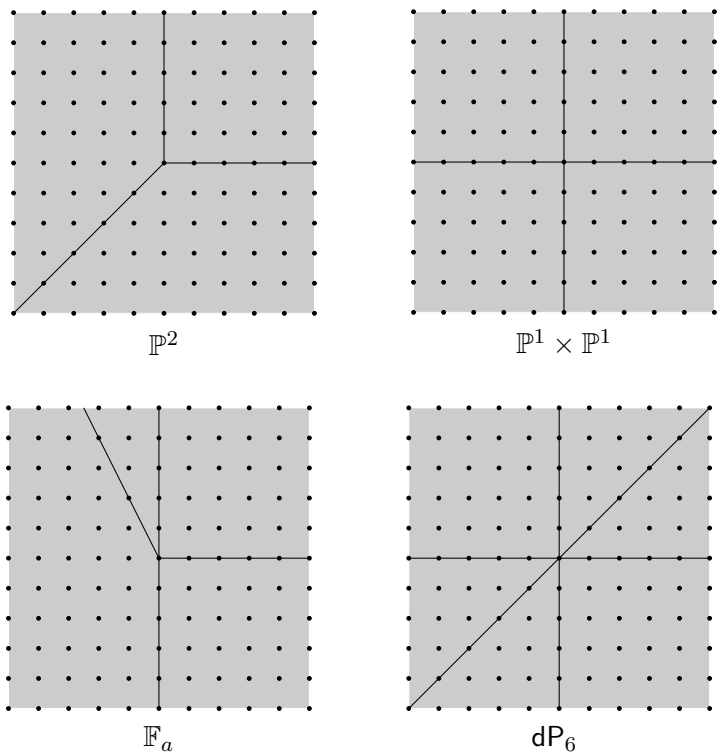
\begin{figure}
 \begin{center}
 \begin{tikzpicture}
   [scale=.4, vertex/.style={circle,draw=black!100,fill=black!100, inner sep=0.5pt,minimum size=0.5mm}]
   \filldraw[fill=black!20!white,draw=white!100]
     (-5,5) -- (5,5) -- (5,-5) -- (-5,-5) -- (-5,5);
   \draw (0,0) -- (5,0);
   \draw (0,0) -- (0,5);
   \draw (-5,-5) -- (0,0);
  \foreach \x in {-5,-4,...,5}
   \foreach \y in {-5,-4,...,5}
   {
     \node[vertex] at (\x,\y) {};
   }
\node at (0,-6,0) {\text{$\mathbb{P}^2$}};
 \end{tikzpicture}
\hspace{1cm}
 \begin{tikzpicture}
   [scale=.4, vertex/.style={circle,draw=black!100,fill=black!100, inner sep=0.5pt,minimum size=0.5mm}]
   \filldraw[fill=black!20!white,draw=white!100]
     (-5,5) -- (5,5) -- (5,-5) -- (-5,-5) -- (-5,5);
   \draw (0,0) -- (5,0);
   \draw (0,0) -- (0,5);
   \draw (0,0) -- (0,-5);
   \draw (-5,0) -- (0,0);
  \foreach \x in {-5,-4,...,5}
   \foreach \y in {-5,-4,...,5}
   {
     \node[vertex] at (\x,\y) {};
   }
\node at (0,-6,0) {\text{$\pone \times \pone$}};
 \end{tikzpicture}

\vspace{0.5cm}

 \begin{tikzpicture}
   [scale=.4, vertex/.style={circle,draw=black!100,fill=black!100, inner sep=0.5pt,minimum size=0.5mm}]
   \filldraw[fill=black!20!white,draw=white!100]
     (-5,5) -- (5,5) -- (5,-5) -- (-5,-5) -- (-5,5);
   \draw (0,0) -- (5,0);
   \draw (0,0) -- (0,5);
   \draw (0,0) -- (0,-5);
   \draw (-2.5,5) -- (0,0);
  \foreach \x in {-5,-4,...,5}
   \foreach \y in {-5,-4,...,5}
   {
     \node[vertex] at (\x,\y) {};
   }
\node at (0,-6,0) {\text{$\mathbb{F}_a$}};
 \end{tikzpicture}
\hspace{1cm}
 \begin{tikzpicture}
   [scale=.4, vertex/.style={circle,draw=black!100,fill=black!100, inner sep=0.5pt,minimum size=0.5mm}]
   \filldraw[fill=black!20!white,draw=white!100]
     (-5,5) -- (5,5) -- (5,-5) -- (-5,-5) -- (-5,5);
   \draw (0,0) -- (5,0);
   \draw (0,0) -- (0,5);
   \draw (0,0) -- (0,-5);
   \draw (0,0) -- (-5,0);
   \draw (5,5) -- (0,0);
   \draw (-5,-5) -- (0,0);
  \foreach \x in {-5,-4,...,5}
   \foreach \y in {-5,-4,...,5}
   {
     \node[vertex] at (\x,\y) {};
   }
\node at (0,-6,0) {\text{$\mathsf{dP}_6$ }};
 \end{tikzpicture}
\vspace{-.3cm}
\caption{Fans for minimal toric surfaces}\label{fig:fans}
\end{center}
\end{figure}

Here we exhibit full strong exceptional collections consisting of
$G$-stable blocks for each minimal toric surface exhibited above
(none of these collections are original).
The fans associated to the split forms of these surfaces are given in Figure~\ref{fig:fans}. In each case, we fix a torus $T$ which gives $X$ the structure of a toric $T$-surface.  As remarked above, this gives a homomorphism $G \to \text{Aut}(\Sigma)$ as well as an action of $G$ on $\text{Pic}(X_L)$, where $L$ is a splitting field of $T$, $G = \text{Gal}(L/k)$, and $\Sigma$ is the fan corresponding to the split toric surface $X_L$. We produce $G$-stable exceptional collections in each case by exhibiting $\text{Aut}(\Sigma)$-stable collections.

\begin{ex}\label{ex:p2}
Let $X$ be a toric $T$-surface whose split form is $\mathbb{P}^2$ with
$\text{Aut}(\Sigma) = S_3$. The $ S_3$-action on
$\text{Pic}(\mathbb{P}^2) = \Z$ is clearly trivial, so that the
exceptional collection $\{ \O, \O(1), \O(2)\}$, given in
\cite{Beilinson} yields a full strong $\text{Aut}(\Sigma)$-stable
exceptional collection.  By Corollary~\ref{cor:toricLB},
$X$ admits a full strong exceptional collection.
\end{ex}

\begin{ex}\label{ex:p1p1}
Let $X$ be a toric surface whose split form is $\pone \times \pone$ with $\text{Aut}(\Sigma) = D_8$, and consider the natural projections $p_1, p_2: \pone \times \pone \to \pone$.  Let $\O(p, q) = p_1 ^*\O (p) \otimes p_2^* \O (q)$.  By \cite{KvichanskyNogin}, the collection $\{\O, \O(1, 0), \O(0, 1), \O(1,1) \}$ on $\pone \times \pone$ is exceptional since $\{ \O, \O(1)\} $ is an exceptional collection for $\pone$. The $ D_8$-action preserves this collection, with orbits given by the blocks $\mathsf{E}^0 = \{ \O \}$, $\mathsf{E}^1 = \{ \O(1, 0), \O(0,1) \} $, and $\mathsf{E}^2= \{ \O(1,1)\}$.  In particular, this collection above is $\text{Aut}(\Sigma)$-stable, and Corollary~\ref{cor:toricLB} yields an exceptional collection on $X$.
\end{ex}

\begin{ex}
Let $X$ be a toric surface whose split form is the Hirzebruch surface
$\mathbb{F}_a$; here $\text{Aut}(\Sigma) = C_2$.
Let $e_1, e_2$ be the standard basis for $\Z^2$.
As in \cite[Ex. 4.1.8]{CLS},
let $u_1  =-e_1 + ae_2$, $u_2 = e_2$, $u_3 = e_1$, and $u_4
= -e_2$ be the generators of $\Sigma(1)$ with corresponding toric
divisors $D_i$.  The Picard group of $\mathbb{F}_a$ is freely generated
by $\{D_1, D_2\}$ and $D_1$ is linearly equivalent to $D_3$.
The only nontrivial fan automorphism $\sigma$ takes $e_1 \mapsto -e_1+ae_2$
and $e_2 \mapsto e_2$.
Thus $\sigma$ leaves $D_2,D_4$ fixed and interchanges $D_1$ and $D_3$.
We conclude the action of $C_2$ on
$\text{Pic}(\mathbb{F}_a)$ is trivial, and thus, any exceptional
collection is necessarily $G$-stable (see Lemma~\ref{lem:picinv}).
An exceptional collection for $\mathbb{F}_a$ is given by $\{\O, \O(D_3),
\O(D_4), \O(D_3 + D_4)\}$ \cite{KvichanskyNogin}.
Corollary~\ref{cor:toricLB} then gives an exceptional collection on $X$.
\end{ex}

\begin{ex}\label{ex:dp6}
Let $X$ be a toric surface whose split form is $\mathsf{dP}_6$; here
$\text{Aut}(\Sigma) = D_{12}$. Viewing $\mathsf{dP}_6$ as the blowup of
$\bbP ^2$ at 3 non-colinear points, let $H$ be the pullback of the
hyperplane divisor on $\bbP^2$ and $E_i$ the exceptional divisors, $i =
1, 2, 3$.  As shown in \cite[Prop. 6.2(ii)]{King}, the collection
$$\{\O, \O(H - E_1), \O(H - E_2), \O(H - E_3), \O(H), \O(2H - (E_1 + E_2
+ E_3)) \}$$ gives an exceptional collection for $\mathsf{dP}_6$,
which is $\text{Aut}(\Sigma)$-stable.

Let us rephrase this in the notation of \cite{BSS}.
There are two morphisms $\mathsf{dP}_6 \to \bbP^2$ realizing
$\mathsf{dP}_2$ as a blowup of $\bbP^2$, and we denote the collection of
all six exceptional divisors by $L_i$ and $M_i$, with $i = 1, 2, 3$.
Let $H$ and $H'$ denote the pullbacks of the hyperplane divisors on
$\bbP^2$ under the maps contracting $M_i$ and $L_i$, respectively, where
we identify $H$ with the divisor given in King's collection above (and
thus we also identify $E_i$ with $M_i$).
Then $H = L_1 + M_2 + M_3$, and it follows that $$2H - (E_1 + E_2 + E_3)
= L_1 + L_2 + M_3 = H'$$ using the relation $L_i + M_j = L_j + M_i$.
Furthermore, one checks that $ H - E_1 =  L_2 + M_3$, $H-E_2 = L_1 +
M_3$, and $H- E_3  = L_1 + M_2$.
As described in \cite[$\S$2]{BSS}, the element $\sigma$ in  $S_3 \times
C_2 = D_{12}$ which cyclically permutes the six lines $L_i, M_i$ also
satisfies $\sigma (H) = H'$ and $\sigma^2(H) = H$.
We arrange the exceptional collection above into blocks $\mathsf{E}^0 =
\{\O \}$, $\mathsf{E}^1 = \{ \O(H - E_1), \O(H - E_2), \O(H - E_3)\}$
and $\mathsf{E}^2 = \{\O(H), \O(2H - (E_1 + E_2 + E_3))\}$.
In particular, the exceptional collection given above is
$\text{Aut}(\Sigma)$-stable, and by Corollary~\ref{cor:toricLB} we have an exceptional
collection on $X$.
\end{ex}

\begin{prop}\label{prop:surface}
Every toric surface admits a full exceptional collection of sheaves.
\end{prop}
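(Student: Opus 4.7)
The plan is to combine the minimal model program for surfaces over non-closed fields with the Galois-equivariant blowup result of Lemma~\ref{lem:blowup} and the exceptional collections on minimal toric surfaces already exhibited in Examples~\ref{ex:p2}--\ref{ex:dp6}.

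Concretely, by the minimal model program recalled at the beginning of Section~\ref{sect:surfaces}, every smooth projective toric surface $X$ over $k$ fits into a sequence of $k$-morphisms
\[ X = X_0 \to X_1 \to \cdots \to X_r = X_{\min} \]
where each $X_i \to X_{i+1}$ is the blowup of a Galois orbit of $k^s$-points and $X_{\min}$ is a minimal smooth projective toric surface. Lemma~\ref{lem:classification} then says that $X_{\min}$ is a $k^s/k$-form of one of $\mathbb{P}^2$, $\pone \times \pone$, $\mathbb{F}_a$ for some $a \geq 2$, or $\mathsf{dP}_6$. For each of these split models, Examples~\ref{ex:p2}--\ref{ex:dp6} exhibit an $\Aut(\Sigma)$-stable full exceptional collection of line bundles, and Corollary~\ref{cor:toricLB} promotes each of these to a full exceptional collection of vector bundles (hence of sheaves) on $X_{\min}$ itself.

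For the inductive step, suppose $X_{i+1}$ carries a full exceptional collection of sheaves, and let $\{Y_1, \ldots, Y_s\}$ be the $G$-orbit of $k^s$-points of $X_{i+1}$ blown up to produce $X_i$. Because these points are pairwise distinct, all intersections $Y_I$ with $|I| \geq 2$ are empty, and each singleton $Y_j$ trivially admits the full $H_j$-stable exceptional collection $\{\mathcal{O}_{Y_j}\}$. Applying Lemma~\ref{lem:blowup} yields a full exceptional collection on $X_i$. Iterating from $i = r$ down to $i = 0$ produces the desired full exceptional collection on $X$.

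The point requiring the most care is verifying that sheafiness propagates through each step. For the base case this is immediate from Corollary~\ref{cor:toricLB}, which preserves the vector-bundle (and hence sheaf) property. For the inductive step, one has to track through Orlov's blowup formula (used in Lemma~\ref{lem:blowup}): the new generators introduced at the blowup of a smooth point on a surface come from pushforwards of line bundles on the exceptional $\mathbb{P}^1$, and these pushforwards are locally free (no higher cohomology appears), so the collection remains a collection of sheaves. Combining this with the sheaf collection already present on $X_{i+1}$ gives the desired sheaf collection on $X_i$, closing the induction.
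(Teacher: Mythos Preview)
Your proof is correct and follows essentially the same route as the paper: run the MMP to reach a minimal toric surface, invoke the $\Aut(\Sigma)$-stable collections from Examples~\ref{ex:p2}--\ref{ex:dp6} together with Corollary~\ref{cor:toricLB} for the base case, and then climb back up the tower of blowups using Lemma~\ref{lem:blowup}. One terminological slip in your last paragraph: the pushforwards $j_*\O_E(-1)$ from the exceptional curves are coherent sheaves on the blowup but not locally free (they are supported on $E$); your parenthetical ``no higher cohomology appears'' is the correct reason they remain honest sheaves, which is all the proposition requires.
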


\begin{proof}
There is a sequence of blowups $X = X_0 \to \cdots \to X_s = X'$ where
$X'$ is minimal, so must be one of the varieties given in
Lemma~\ref{lem:classification}.
By Examples~\ref{ex:p2}-\ref{ex:dp6},
$X'$ admits a full strong exceptional collection of vector bundles, and
thus $X'_L$ admits a $G$-stable exceptional collection.  By
Lemma~\ref{lem:blowup}, $X_L$ admits a $G$-stable exceptional
collection.
\end{proof}

\begin{rem}
The authors would like to thank F.~Xie for pointing out a mistake
in the statement of a previous version of
Proposition~\ref{prop:surface}.
Xie also discusses exceptional collections of toric surfaces in
\cite{Xie}, although her definition of exceptional object is not the
same as ours. 
In the second arXiv version of that paper, Xie sketched in Remark~8.8
how one might construct an exceptional collection for toric surfaces.
After the authors posted a preliminary version of this paper to the
arXiv, Xie updated her preprint with Corollary~8.8, which proves the analog of
the above proposition for collections of vector bundles but using her notion of exceptional collection.
\end{rem}


\subsection{The toric Frobenius and toric Fano 3-folds}\label{sect:3fold}
In Table~\ref{tab:3-folds} we present the classification of smooth toric Fano 3-folds given in \cite{Batyrev, Watanabe}, adopting Batyrev's enumeration. For each $X = X_{\Sigma}$, we record the following invariants:
\begin{itemize}

\item $\sigma(1) = | \Sigma (1) |$ is the number of rays of $\Sigma$ \cite{BT}.

\item $k_0$ is the rank of the Grothendieck group $K_0(X)$, which coincides with the number of maximal cones in the fan $\Sigma$ \cite{BT}.

\item $\Aut(\Sigma) $ is the automorphism group of the (lattice $N$ which preserves the) fan $\Sigma$ corresponding to $X$.

\item $\rho$ is the Picard rank of $X$ \cite{Watanabe}.

\item $\rho ^G$ is the $\Aut(\Sigma)$-invariant Picard rank of $X$,
i.e., the rank of $\text{Pic}(X)^{\Aut(\Sigma)}$.

\item $\mathfrak{fr} = | \mathsf{Frob}(X) |$ is the number of
isomorphism classes of line bundles produced by the push forward of the structure sheaf under the Frobenius morphism \cite{BT, Uehara}.

\item $ \mathfrak{fr}^- = |\mathsf{Frob}(X) \cap -\text{Nef}(X)|$ is the
number of isomorphism classes of line bundles in $\mathsf{Frob}(X)$ which lie in the anti-nef cone of $X$ \cite{Uehara}.
\end{itemize}

\begin{table}
\begin{center}
\begin{tabular}{lrlccccccc}
\toprule
  & &Toric Fano 3-fold $X$ & $\sigma(1)$ &$k_0$ & $\operatorname{Aut}(\Sigma)$ & $\rho$ & $\rho^G$ & $\mathfrak{fr}$ & $\mathfrak{fr}^- $\\
\midrule

& 1. & $\bbP^3$ & 4 &  4 & $S_4$ & 1 & 1 & 4 & 4  \\

&2.  &$\bbP_{\bbP^2}(\O \oplus \O(2))$ & 5 & 6 & $S_3$ & 2 & 2 & 7 & 6 \\

 &3.  &$\bbP_{\bbP^2}(\O \oplus \O(1))$ & 5 & 6 & $S_3$ & 2 & 2 & 6 & 6\\
 
&4.& $\bbP_{\bbP^1}(\O \oplus \O \oplus \O(1))$ & 5 & 6 & $C_2 \times C_2$ & 2 & 2 & 6 & 6\\

 &5. & $\bbP^2\times \bbP^1$ & 5 & 6 & $D_{12}$ & 2 & 2 & 6 & 6\\
 
 &6.  &$\bbP_{\bbP^1\times \bbP^1}(\O\oplus \O(1,1))$ & 6 & 8 & $D_8$ & 3 & 2 & 8 & 8\\
 
 &7.  &$\bbP_{\dP_8}(\O\oplus \O(l))$, $l^2=1$ on $\dP_8$ & 6 & 8 & $D_8$ & 3 & 3 & 8 & 8 \\

&8.  &$\bbP^1\times \bbP^1 \times \bbP^1$ & 6 & 8 & $C_2 \times S_4$ & 3 & 1 & 8 & 8\\

 &9.  &$\dP_8\times \bbP^1 $ & 6 & 8 &$ C_2 \times C_2$ & 3 & 3 & 8& 8 \\
 
&10. &$\bbP_{\bbP^1\times\bbP^1}(\O\otimes \O(1,-1))$& 6 & 8 & $D_8$ & 3 & 2 & 8 & 8\\

 &11.  & $\text{Bl}_{\bbP^1}(\bbP_{\bbP^2}(\O \oplus \O(1)))$& 6 & 8 & $C_2$ & 3 & 3 & 9 & 8 \\
 
 &12. & $\text{Bl}_{\bbP^1}(\bbP^2\times \bbP^1)$& 6 & 8 & $C_2$ & 3 & 3 & 8 & 8 \\

& 13.  & $\dP_7-$bundle over $\bbP^1$ & 7 & 10 & $C_2$ & 4 & 4 & 10 & 10 \\

 & 14. & $\dP_7-$bundle over $\bbP^1$ & 7 & 10 & $C_2 \times C_2$ & 4 & 3 & 10 & 10\\

& 15. & $\dP_7\times \bbP^1$& 7 & 10 & $C_2 \times C_2$ & 4 & 3 & 10 & 10\\

& 16. & $\dP_7-$bundle over $\bbP^1$& 7 & 10 & $C_2$ & 4 & 4 & 10 & 10\\

 & 17.  &$\dP_6\times \bbP^1$& 8 & 12 & $C_2 \times C_2 \times S_3$ & 5 &  2 & 12 & 12\\

 & 18. & $\dP_6-$bundle over $\bbP^1$& 8 & 12 & $C_2 \times C_2$  & 5 & 4 & 12 & 12\\
\bottomrule
\end{tabular}
\vspace{.2cm}
\caption{Toric Fano 3-folds}\label{tab:3-folds}
\end{center}
\end{table}

\subsubsection{Toric Frobenius}\label{section:frob}

Let $X$ be a split toric variety of dimension $n$ with fixed torus embedding $T \hookrightarrow X$ and take $\ell \in \Z^+$.  Define the $\ell^{\text{th}}$ Frobenius map on $T = \gm^n$ to be $(x_1,..., x_n) \mapsto (x_1^{\ell},..., x_n^{\ell})$.  The unique extension to $X$ will be denoted $F_{\ell}$ and called the \emph{$\ell^{th}$ Frobenius morphism}.  Alternatively, if $\Sigma \subset N$ is the fan associated to $X$, define a lattice $N' = \frac{1}{\ell} N$.  The inclusion $N \subset N'$, which sends a cone in $N_{\real}$ to the cone with the same support in $N'_{\real}$, induces a finite surjective morphism which is precisely the $\ell^{\text{th}}$ Frobenius morphism $F_{\ell}: X \to X.$ 

The sheaf $(F_{\ell})_*(\O_X)$ splits into line bundles and Thomsen
provides an algorithm for computing its direct summands  \cite{Thomsen}.
We let $\mathsf{Frob}(X)$ denote the union of all isomorphism classes of line bundles arising as direct summands of $(F_{\ell})_*(\O_X)$ as $\ell$ varies over $\Z^+$.  Note that $\mathsf{Frob}(X)$ is a finite set. 

\begin{conj}[Bondal \cite{Bondal2}]\label{conj:Bondal}
If $X$ is a smooth proper toric variety then the collection $\mathsf{Frob}(X)$ generates $\mathsf{D^b}(X)$.
\end{conj}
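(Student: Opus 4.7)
The plan is a two-step reduction via toric geometry: first reduce to the Picard-rank-one case using the toric minimal model program together with Orlov's semiorthogonal decompositions, and then verify generation directly on the building blocks. Among smooth projective toric varieties, Picard rank one forces $X \simeq \PP^n$, and here Thomsen's algorithm recovers $\mathsf{Frob}(\PP^n) = \{\O, \O(1), \ldots, \O(n)\}$ --- Beilinson's classical full exceptional collection --- which certainly generates $\mathsf{D^b}(\PP^n)$.

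For the inductive step, one would need to show that $\mathsf{Frob}(-)$ is compatible with smooth toric projective bundle maps $\PP(\mathcal{E}) \to Y$ and blowups $\Bl_Z X \to X$ along smooth toric centers. Since Frobenius corresponds on the level of fans to the lattice dilation $N \hookrightarrow \tfrac{1}{\ell} N$ (with $\Sigma$ unchanged), it commutes on the nose with toric morphisms; the real question is whether the line bundle summands of $(F_\ell)_\ast \O_X$, computed by Thomsen's round-down procedure, distribute themselves across the Orlov pieces in a way sufficient to generate each. If such compatibility can be established, an induction on dimension combined with the base case for $\PP^n$ closes the argument.

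The main obstacle is that Bondal's conjecture is a well-known open problem in general --- the round-down operation at the heart of Thomsen's algorithm interacts nontrivially with restriction to subvarieties and with pushforward along toric fibrations, so even for blowups along toric strata the bookkeeping is delicate. A realistic fallback, and essentially what Uehara carries out in the Fano 3-fold setting relevant to Table~\ref{tab:3-folds}, is a case-by-case verification: for each variety enumerated, compute $\mathsf{Frob}(X)$ via Thomsen, extract a full exceptional subcollection (the tabulated inequality $\mathfrak{fr} \geq k_0$ is the necessary numerical condition, and the finer invariant $\mathfrak{fr}^-$ tracks membership in the anti-nef cone where strong exceptional collections are easiest to recognize), and invoke Proposition~\ref{prop:blackbox} to descend the collection to arbitrary $k$-forms upon verifying $\operatorname{Aut}(\Sigma)$-stability.
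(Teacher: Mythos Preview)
The statement you are attempting to prove is labeled in the paper as a \emph{Conjecture}, not a theorem, and the paper offers no proof of it. It is presented as an open problem attributed to Bondal; the paper only records the known partial results (Bondal's argument when all Frobenius summands are nef, and Uehara's verification for toric Fano 3-folds) and then uses those as black-box inputs elsewhere. So there is no ``paper's own proof'' to compare against, and your third paragraph already correctly identifies the situation: the conjecture is open in general.

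Your first two paragraphs sketch an inductive strategy, but it has a structural gap beyond the Frobenius-compatibility issue you flag. The reduction you describe --- running the toric MMP down to Picard rank one and handling projective bundles and smooth blowups along the way --- does not account for all smooth projective toric varieties. Toric MMP steps include divisorial contractions and flips that need not keep the variety smooth, and even within the smooth category there exist smooth projective toric varieties that are not iterated blowups or projective bundles over $\PP^n$. So even granting perfect bookkeeping of Thomsen's round-downs across blowups and bundles, the induction would not close. Your fallback paragraph is the honest position: for the applications in the paper one verifies generation case by case on the specific varieties in the tables, which is exactly what Uehara does and what the paper invokes.
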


For a toric variety $X$ in which Bondal's Conjecture is true, we will
say that \emph{the Frobenius generates the derived category of} $X$.
In loc. cit., Bondal proves that if all summands of $\mathsf{Frob}(X)$ are nef, one actually gets a full strong exceptional collection, so that Conjecture~\ref{conj:Bondal} is true in this case. He also notes his arguments work for all but two (isomorphism classes of) toric Fano threefolds. To cover all toric Fano threefolds, 
Uehara noticed that discarding line bundles which do
not lie in the set $-\text{Nef}(X)$ yields a full strong
exceptional collection \cite{Uehara}.

\begin{lem}\label{lem:FrobNef}
Let $X$ be a toric variety over $k$ with splitting field $L$.
Suppose $\mathsf{E}$ is a full (strong) exceptional collection for
$\mathsf{D^b}(X_L)$
where either $\mathsf{E} = \operatorname{\mathsf{Frob}}(X_L)$ or
$\mathsf{E} = \operatorname{\mathsf{Frob}}(X_L) \cap - \operatorname{Nef}(X_L)$.
Then there exists a full (strong) exceptional collection for
$\mathsf{D^b}(X)$.
\end{lem}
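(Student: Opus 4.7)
The plan is to deduce the lemma from Theorem~\ref{thm:descblocks} by verifying that the hypothesized collection $\mathsf{E}$ is $G$-stable, where $G = \operatorname{Gal}(L/k)$. First I would argue that the $\ell$-th Frobenius morphism is already defined over $k$: multiplication by $\ell$ on the torus $T$ is a group homomorphism that makes sense over any field, so it is automatically $k$-rational, and by the standard extension of a torus endomorphism to a $T$-equivariant endomorphism of the normal toric variety, it extends uniquely to a morphism $F_\ell : X \to X$ defined over $k$ whose base change to $L$ is the Frobenius morphism on the split toric variety $X_L$.

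Consequently, the coherent sheaf $(F_\ell)_\ast \O_X$ on $X$ descends to $k$, and its pullback to $X_L$ is $(F_{\ell,L})_\ast \O_{X_L}$. By Krull--Schmidt for coherent sheaves on a smooth projective variety, the set of isomorphism classes of indecomposable direct summands is well-defined; since the sheaf is defined over $k$, this set is closed under the pullback action of $G$ on $\mathsf{D^b}(X_L)$. Taking the union over $\ell \in \Z^+$, we conclude that $\operatorname{\mathsf{Frob}}(X_L)$ is a $G$-stable set of line bundles. The nef cone of $X_L$ is an intrinsic invariant of $X_L$, and is therefore preserved by the semilinear $G$-action on $\operatorname{Pic}(X_L)$, so the intersection $\operatorname{\mathsf{Frob}}(X_L) \cap -\operatorname{Nef}(X_L)$ inherits $G$-stability as well.

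With $G$-stability established, Theorem~\ref{thm:descblocks} applies directly and yields a full (resp.\ strong) exceptional collection on $\mathsf{D^b}(X)$. Since the input collection consists of line bundles, in particular of vector bundles, the descended collection consists of vector bundles (though the lemma does not require this). The only substantive point is the $k$-rationality of the Frobenius, which is transparent once one views it through the lattice map $N \hookrightarrow \tfrac{1}{\ell} N$; the rest of the argument is formal from the descent theorem.
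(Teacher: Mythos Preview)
Your argument has a gap when $X$ is not neutral. You claim the $\ell$th power map on $T$ extends to $F_\ell: X \to X$ over $k$, but the open $T$-orbit in $X$ is in general only a $T$-torsor, not $T$ itself, and the $\ell$th power map on $T$ does not induce a canonical self-map of a torsor. Concretely, if the twisted Galois action on $X_L$ involves translation by some $t_g \in T(L)$ (as it does for non-neutral forms, via the cocycle $\alpha$ in the factorization described just before Proposition~\ref{prop:blackbox}), then $F_\ell(t_g \cdot x) = t_g^\ell \cdot F_\ell(x) \ne t_g \cdot F_\ell(x)$, so $F_\ell$ is not $G$-equivariant and does not descend. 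Consequently $(F_\ell)_*\O_{X_L}$ is not the base change of a sheaf on $X$, and your Krull--Schmidt step does not apply as written.

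The conclusion that $\mathsf{Frob}(X_L)$ is $G$-stable as a set of isomorphism classes of line bundles is nevertheless correct, for a different reason: since $T_L$ is connected it acts trivially on $\Pic(X_L)$, so the torsor twist is invisible at the level of line bundle classes. The paper's proof exploits exactly this, observing that $\mathsf{Frob}(X_L)$ and $\operatorname{Nef}(X_L)$ are canonical constructions and hence $\Aut(\Sigma)$-stable, then invoking Corollary~\ref{cor:toricLB}, which absorbs the torsor twist internally via Proposition~\ref{prop:blackbox}. Your route through Theorem~\ref{thm:descblocks} can be salvaged by replacing the descent of $F_\ell$ with the remark that multiplication by $\ell$ on the cocharacter lattice commutes with every element of $\Aut(\Sigma)$, whence $\mathsf{Frob}(X_L) \subset \Pic(X_L)$ is $\Aut(\Sigma)$-stable and therefore $G$-stable.
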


\begin{proof}
Both $\operatorname{\mathsf{Frob}}(X_L)$ and $\text{Nef}(X_L)$ are
canonical constructions and thus are $\text{Aut}(X_L)$-stable.
In particular, $\mathsf{E}$ is $\text{Aut}(\Sigma)$-stable
and so Corollary~\ref{cor:toricLB} applies.
\end{proof}

\begin{prop}\label{prop:3fold}
Let $X$ be a smooth projective toric Fano 3-fold over a field $k$.  Then $X$ admits a full strong exceptional collection consisting of vector bundles.
\end{prop}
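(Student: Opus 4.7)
The plan is to reduce to the split case via Galois descent and then invoke the Frobenius-pushforward collections of Bondal and Uehara. Let $X$ be a smooth projective toric Fano 3-fold over $k$ and let $L/k$ be a splitting field with Galois group $G$. Then $X_L$ is a split smooth projective toric Fano 3-fold, hence $L$-isomorphic to one of the eighteen varieties enumerated in Table~\ref{tab:3-folds}.

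For each such split $X_L$, we appeal to the results recalled in Section~\ref{section:frob}. In cases where every summand of a Frobenius pushforward $(F_\ell)_\ast \O_{X_L}$ is nef, Bondal's argument in \cite{Bondal2} produces a full strong exceptional collection of line bundles indexed by $\mathsf{Frob}(X_L)$ itself. For the remaining cases (the two toric Fano threefolds where some summand fails to be nef, visible in Table~\ref{tab:3-folds} as the rows with $\mathfrak{fr}^- < \mathfrak{fr}$), Uehara's refinement \cite{Uehara} shows that $\mathsf{Frob}(X_L) \cap -\operatorname{Nef}(X_L)$ can still be arranged into a full strong exceptional collection of line bundles. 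Either way, the hypothesis of Lemma~\ref{lem:FrobNef} is met case by case.

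With the split case in hand, the descent is automatic: both $\mathsf{Frob}(X_L)$ and the anti-nef cone $-\operatorname{Nef}(X_L)$ are canonically attached to $X_L$ and hence preserved by $\Aut(X_L)$, in particular by $\Aut(\Sigma)$. Since the natural map $G \to \Aut(\Sigma)$ factors this action, the resulting full strong exceptional collection of line bundles is $\Aut(\Sigma)$-stable, and Lemma~\ref{lem:FrobNef} (whose proof appeals to Corollary~\ref{cor:toricLB}) descends it to a full strong exceptional collection of vector bundles on $X$.

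There is no substantive obstacle: the argument is essentially a bookkeeping exercise combining the classification of split toric Fano 3-folds with the black-box descent established earlier in the paper. The only point that warrants attention is verifying, for the ``bad'' rows of Table~\ref{tab:3-folds}, that Uehara's truncation to the anti-nef cone really produces a full strong exceptional collection --- but this is exactly what is asserted in \cite{Uehara}, and the invariance under $\Aut(\Sigma)$ is built into the canonical nature of the construction, so no case-by-case combinatorial check is needed on our side.
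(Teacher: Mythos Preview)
Your argument is correct and essentially identical to the paper's: both invoke Uehara's result that $\mathsf{Frob}(X_L)\cap -\operatorname{Nef}(X_L)$ is a full strong exceptional collection of line bundles on every split toric Fano 3-fold, and then apply Lemma~\ref{lem:FrobNef} to descend. The only cosmetic difference is that you separate out the cases where Bondal's original argument already suffices, whereas the paper cites Uehara uniformly; this changes nothing substantively.
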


\begin{proof}
Let $X_L$ be the associated split toric Fano 3-fold.  The main result of \cite{Uehara} guarantees that the set $\mathsf{E} = \mathsf{Frob}(X_L) \cap - \text{Nef}(X_L)$ defines a full strong exceptional collection on $X$. Lemma~\ref{lem:FrobNef} completes the proof.
\end{proof}


\subsection{Toric Fano 4-folds}\label{sect:4fold}

There are 124 split smooth toric Fano 4-folds,
which were first classified in \cite{Batyrev}
(a missing case was added in~\cite{Sato}).
In \cite{Prabhu}, Prabhu-Naik exhibits full strong exceptional
collections for all 124 of these 4-folds.
However, it is not clear that these collections are
$\Aut(\Sigma)$-stable, so they do not necessarily lead to full strong
exceptional collections in the arithmetic case.

All collections obtained
using Method 1 of loc. cit. produce $\Aut(\Sigma)$-stable collections (note that
this is precisely the method used in \cite{Uehara} for toric Fano
3-folds, and we will refer to this as the \emph{Bondal-Uehara Method}).
Together with Lemmas~\ref{lem:stabprod} and \ref{lem:FrobNef}, this gives stable exceptional collections for 43 of the 124 smooth toric Fano 4-folds.  However, there are examples when the
Bondal-Uehara Method fails to produce an exceptional collection.
In this case, all is not lost (see Section~\ref{sec:centsym}).

More precisely, the varieties (61), (62), (63), (64), (77), (105),
(107), (108), (110), (122), and (123) of \cite{Prabhu} are shown to have
exceptional collections using the Bondal-Uehara Method.
Hence, they admit exceptional collections which are $\Aut(\Sigma)$-stable
and thus provide exceptional collections for the arithmetic forms. Secondly, for the varieties (109), (114), and (115), the set
$\mathsf{Frob}(X)$ is a full exceptional collection, which is $G$-stable
by Lemma~\ref{lem:FrobNef}. Lastly, Lemma~\ref{lem:stabprod} guarantees the existence of exceptional collections on products.  Hence, the following varieties also admit stable exceptional collections: (0), (4), (9), (17), (24), (25), (26), (27), (45), (52), (53), (54), (55), (56), (58), (67), (73), (88), (90), (92),
(93), (97), (103), (111), (112), (113), (118), (119), (120).


\subsection{Centrally symmetric toric Fano varieties} \label{sec:centsym}

Polytopes with the highest degree of symmetry are the \emph{centrally
symmetric} polytopes, i.e., $-P = P$.
The smooth split toric varieties $X$ whose anti-canonical polytope is full-dimensional and centrally symmetric were classified in \cite{VosKly}. It was shown that any such variety (which we refer to as a \emph{centrally symmetric toric Fano varieties}) is isomorphic to a product of projective lines and \emph{generalized del Pezzo varieties} $V_n$ of dimension $n = 2m$.  Note that $V_2 = \mathsf{dP}_6$ and $V_4$ is the missing (116)
from the list in Section~\ref{sect:4fold} (this is (118) in the enumeration found in \cite{Batyrev}). The goal of this section is to exhibit full stable exceptional collections on $V_n$, which in turn yields stable exceptional collections for any centrally symmetric toric Fano variety, in light of Lemma~\ref{lem:stabprod}.

In \cite[Theorem 6.6]{CT}, Castravet and Tevelev found $\Aut(\Sigma)$-stable full
strong exceptional collections for the varieties $V_n$.
The authors of this paper had independently discovered the same
exceptional collection (up to a twist by a line bundle).
Nevertheless, the perspective here may be of independent interest, so we
sketch the argument. The authors give a more detailed analysis in \cite{BDMdP}.

The variety $V_{n}$ with $n= 2m$ has rays given by
$$\begin{array}{rl}
e_1 &= (1,0,\cdots,0)\\
e_2 &= (0,1,\cdots,0)\\
&\vdots\\
e_n &= (0,0,\cdots,1)\\
e_{n+1} &= (-1,-1,\cdots,-1)\\
\end{array}
\hspace{.4cm}
\begin{array}{rl}
\bar{e}_1 &= (-1,0,\cdots,0)\\
\bar{e}_2 &= (0,-1,\cdots,0)\\
&\vdots\\
\bar{e}_n &= (0,0,\cdots,-1)\\
\bar{e}_{n+1} &= (1,1,\cdots,1)
\end{array}$$ and whose maximal cones are given as follows.  Among the rays $e_1,..., e_{n+1}$, omit a single $e_i$.  From the remaining $n = 2m$ rays, choose $\frac n 2$ of them and take their antipodes \cite[Proof of Thm. 5]{VosKly}. Note that $V_2 = \mathsf{dP}_6$ (whose fan is given in Figure~\ref{fig:fans}).  The number of maximal cones $c(n)$ of $V_n$ is given by
\[
c(n)=\frac{(n+1)!}{(\frac n 2)!^2} = \frac{(2m +1)! }{m!^2}\ .
\]
There's a natural action of $S_{n+1} \times C_2$, where $S_{n+1}$
permutes $e_1,\ldots,e_{n+1}$ and $\bar{e}_1\ldots\bar{e}_{n+1}$ in the
obvious way.  The $C_2$-action is simply the antipodal map on the
cocharacter lattice --- we will refer to it as ``the involution.''
Clearly, the involution interchanges $e_i$ and $\bar{e_i}$. 

The variety $V_n$ is of importance in birational geometry due its
appearance in the factorization of the standard Cremona transformation
of $\bbP^n$. In fact, as is well-known, $V_n$ can be explicitly obtained
from $\bbP^n$ as follows. First blow up the torus fixed points, then
flip the (strict transforms) of the lines through these points, then
flip the (strict transforms) of planes through these points, \ldots, up
until, and not including, the half-dimensional linear subspaces. The
resulting variety is $V_n$. For more, see \cite{Casagrande}. 

Since $V_n$ and the blow up of $\bbP^n$ at its torus fixed points are
isomorphic in codimension $1$, they have isomorphic Picard groups. We
use a basis $H,E_1, \ldots, E_{n+1}$ for $\Pic(V_n)$,
which correspond to the hyperplane section and the
exceptional divisors of the blown up $\bbP^n$.
We have
\[
[e_i] = E_i, \quad
[\bar{e}_i] = (H-\sum_{j=1}^{n+1}E_j)+E_j
\]
where $S_{n+1}$ permutes the $E_i$ leaving $H$ fixed, and the involution
is represented by the following matrix
\[
\begin{pmatrix}
n & 1 & 1 & \cdots & 1 \\
1-n & 0 & -1 & \cdots & -1 \\
1-n & -1 & 0 & \cdots & -1 \\
\vdots & \vdots & \vdots & \ddots & \vdots\\
1-n & -1 & -1 & \cdots & 0 \\
\end{pmatrix} \ .
\]

For each $c \in \Z$ and $J \subset \{1, \ldots, n+1\}$, define 
\[
 F_{c,J} := c\left(\sum_{i=1}^{n+1}E_i - H\right) - \sum_{j \in J} E_j .
\]
Note that the involution takes $F_{c,J}$ to $F_{|J|-c,J}$. Then, 

\begin{prop} \label{prop:Vn}
 The set of $F_{c,J}$ with
 \begin{enumerate}
  \item $\displaystyle{|J|-\frac{n}{4} \le c \le \frac{n}{4}}$, or
  \item $\displaystyle{\frac{n+2}{4} \le c \le |J| - \frac{n+2}{4}}$.
 \end{enumerate}
form a full strong $(S_{n+1} \times C_2)$-stable exceptional collection
on $V_n$ under any ordering of the
blocks such that $|J|$ is (non-strictly) decreasing.
\end{prop}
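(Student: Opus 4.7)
The plan is to establish four properties of the collection in turn: $(S_{n+1}\times C_2)$-stability, correct cardinality, strong exceptionality, and fullness.

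For \emph{stability}, I observe that both defining conditions depend only on $|J|$, hence the collection is $S_{n+1}$-stable. The involution sends $F_{c,J}$ to $F_{|J|-c,J}$, and since $c\mapsto |J|-c$ interchanges the endpoints of each of the intervals $[|J|-n/4,\,n/4]$ and $[(n+2)/4,\,|J|-(n+2)/4]$, the collection is $C_2$-stable as well.

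For the \emph{cardinality}, for each $0\le j\le n+1$ I would count the valid integers $c$, obtaining the length of a (possibly empty) integer interval depending only on $j$. A direct binomial computation then yields $\sum_{j=0}^{n+1}\binom{n+1}{j}\cdot\#\{c\} = c(n) = (2m+1)!/(m!)^{2}$, matching the rank of $K_0(V_n)$ (equivalently, the number of maximal cones of $V_n$). This is the right length for a full exceptional collection.

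For \emph{strong exceptionality}, one has $\operatorname{Ext}^{\bullet}(F,F')=H^{\bullet}(V_n,\,F'-F)$ for line bundles $F,F'$, computable via the Demazure/Batyrev--Borisov formula from the moment polytope of $F'-F$. I would verify that for $|J'|<|J|$ all cohomology vanishes (semiorthogonality between the blocks of the chosen ordering), that for $|J'|=|J|$ with $F_{c',J'}\neq F_{c,J}$ all cohomology vanishes (mutual orthogonality within each block), and that for $|J'|>|J|$ only $H^0$ may be nonzero (the strong property). Endomorphism triviality $\operatorname{End}(F_{c,J})=k$ is automatic. The intervals in (1) and (2) are chosen precisely so that the moment polytopes of the relevant differences have the required vanishing behaviour.

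Finally, for \emph{fullness}, the variety $V_n$ arises from $\operatorname{Bl}_{n+1}\mathbb{P}^n$ by a sequence of $K$-positive toric flips (see \cite{Casagrande}). Applying Lemma~\ref{lem:blowup} to the blowup and then Lemma~\ref{lem:flips} to each flip inductively produces $\operatorname{Aut}(\Sigma)$-stable full exceptional collections of line bundles at each stage. Tracking the line bundles through these small modifications (where $\operatorname{Pic}$ is preserved) and matching them against our $F_{c,J}$---or alternatively, identifying our collection up to a line bundle twist with the full strong collection of Castravet--Tevelev \cite[Thm.~6.6]{CT}---yields fullness. The main obstacle throughout is the cohomology vanishing step: the intervals are tailored so that every relevant moment polytope has precisely the required shape, but verifying this in detail requires a careful polytope-by-polytope analysis, which is carried out in \cite{BDMdP}.
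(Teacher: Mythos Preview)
Your proposal is correct and follows essentially the same strategy as the paper's proof sketch: toric cohomology vanishing for strong exceptionality (the paper phrases this via the Borisov--Hua ``forbidden cones'' of \cite{BH}, which is equivalent to your moment-polytope formulation), and the flip sequence to $\operatorname{Bl}_{n+1}\bbP^n$ together with \cite{BFK} for fullness, with full details deferred to \cite{BDMdP} in both cases. Your explicit stability and cardinality checks are correct additions not spelled out in the paper.

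One small imprecision worth flagging: Lemmas~\ref{lem:blowup} and~\ref{lem:flips} as stated only manufacture \emph{some} full exceptional collection on $V_n$; they do not by themselves show that the specific line bundles $F_{c,J}$ generate. To conclude fullness of this particular collection you must either run the direct \cite{BFK} argument tracking these line bundles through the intermediate birational models (this is what the paper sketches) or invoke the identification with \cite[Thm.~6.6]{CT} up to a twist, which you correctly offer as an alternative and which the paper in fact states at the outset.
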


\begin{sproof}
This collection is the same as that of \cite[Theorem 6.6]{CT} up to a
twist by a line bundle.
Thus, we only sketch an argument here (expanded in \cite{BDMdP}).
One checks that the description of ``forbidden cones'' given by
Borisov and Hua in \cite{BH} shows that relevant cohomology groups
vanish --- this shows that it is a strong exceptional collection.
To prove generation, one considers the series of flips required to reach
$\bbP^n$ blown up at $n+1$ points.
Using the description of the semi-orthogonal decompositions in
\cite{BFK}, the line bundles can be shown to generate the necessary
admissible subcategories of each intermediate birational model.
\end{sproof}

Since any centrally symmetric toric Fano variety is a product of projective lines and the varieties $V_n$, Lemma~\ref{lem:stabprod} yields the following:

\begin{cor}\label{cor:centsym}
Any form of a centrally symmetric toric Fano variety admits a full
strong exceptional collection consisting of vector bundles.
\end{cor}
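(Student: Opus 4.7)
The plan is to assemble the corollary directly from the three ingredients already available in the section: the Voskresenski\u\i--Klyachko structural classification, the explicit stable collection on $V_n$ of Proposition~\ref{prop:Vn}, and the product descent of Lemma~\ref{lem:stabprod}. So almost no new work is required; the proof is essentially a verification that the hypotheses of Lemma~\ref{lem:stabprod} apply.

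First I would let $Z$ be an arbitrary form of a centrally symmetric toric Fano variety, pick a splitting field $L/k$, and invoke \cite{VosKly} to write
\[
Z_L \;\simeq\; (\pone)^{a} \times V_{n_1} \times \cdots \times V_{n_r},
\]
where each $V_{n_i}$ has even dimension $n_i = 2m_i$. Each $V_{n_i}$ is indecomposable as a toric variety (its fan does not split as a non-trivial product, since the $S_{n+1}$-action on rays is transitive), and $\pone$ is indecomposable for trivial reasons. This puts us exactly in the setting of Lemma~\ref{lem:stabprod}, provided each factor carries a full strong $\Aut(\Sigma_i)$-stable exceptional collection of line bundles.

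For the $\pone$ factors, the standard Beilinson collection $\{\O,\O(1)\}$ is full, strong, and consists of line bundles; the unique nontrivial fan automorphism swaps the two torus-invariant divisors, both of class $\O(1)$, so the collection is $\Aut(\Sigma_{\pone})$-stable pointwise. For the $V_{n_i}$ factors, I would use the fact that $\Aut(\Sigma_{V_n}) = S_{n+1} \times C_2$ (the $S_{n+1}$ permuting the pairs $(e_i,\bar e_i)$ and the $C_2$ being the central involution), and then appeal directly to Proposition~\ref{prop:Vn}, which produces a full strong $(S_{n+1} \times C_2)$-stable exceptional collection of line bundles $\{F_{c,J}\}$ on $V_n$.

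Finally, Lemma~\ref{lem:stabprod} applied to this product decomposition yields a full strong exceptional collection of vector bundles on $Z$ itself. There is no genuine obstacle: the one point that deserves care is matching $\Aut(\Sigma_{V_n})$ with the group acting in Proposition~\ref{prop:Vn} and checking indecomposability of the factors, but both are straightforward from the explicit description of the fan of $V_n$ given above.
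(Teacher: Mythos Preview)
Your proposal is correct and follows exactly the paper's approach: invoke the Voskresenski\u\i--Klyachko classification to write $Z_L$ as a product of copies of $\pone$ and the $V_{n_i}$, equip each factor with its $\Aut(\Sigma_i)$-stable full strong exceptional collection of line bundles (Beilinson for $\pone$, Proposition~\ref{prop:Vn} for $V_n$), and apply Lemma~\ref{lem:stabprod}. One small slip: the $S_{n+1}$-action on the rays of $V_n$ is \emph{not} transitive (it has the two orbits $\{e_i\}$ and $\{\bar e_i\}$); it is the full group $S_{n+1}\times C_2$ that acts transitively, which is what you need for your indecomposability argument.
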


\subsection{Toric varieties from the Weyl fans of type A} \label{sec:X(An)}

One method for identifying toric varieties with large symmetry groups is to start with root systems. Let $R$ be a root system in a Euclidean space $E$. The $\Z$-lattice generated by $R$ is denoted $M(R)$, while its dual in $E^\vee$ is denoted by $N(R)$. For every set $S$ of simple roots in $E$, we have the dual cone corresponding to a closed Weyl chamber
\begin{displaymath}
 \sigma_S := \{ f \in E^\vee \mid \langle f, \alpha \rangle \geq 0 \ , \ \forall \alpha \in S\}.
\end{displaymath}
The cones $\sigma_S$ are the maximal cones for a fan $\Sigma_R$ in $E^\vee$. We denote the associated toric variety by $X(R)$. Recall that an automorphism of $R$ is an element of $\operatorname{GL}(E)$ preserving $R$. Let $W(R)$ be the Weyl group and $\Gamma(R)$ the symmetry group of the Dynkin diagram of $R$. It is well-known that
\begin{displaymath}
 \text{Aut}(R) \simeq W(R) \rtimes \Gamma(R).
\end{displaymath}
Any automorphism of $R$ induces an action on the fan $\Sigma(R)$, which yields a homomorphism $\phi: \text{Aut}(R) \to \text{Aut}(\Sigma(R))$.

\begin{lem} \label{lem:X(R)FanAut}
 The map $\phi:  \operatorname{Aut}(R) \to \operatorname{Aut}(\Sigma(R))$ is an isomorphism.
\end{lem}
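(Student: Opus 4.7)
My plan is to prove the isomorphism by separately establishing injectivity (easy) and surjectivity (where the content lies). The map $\phi$ is essentially a restriction of the dualization map $\operatorname{GL}(E) \to \operatorname{GL}(E^\vee)$ taking $g$ to its contragredient action on $N(R)\otimes\R = E^\vee$; since $R$ spans $E$, injectivity of $\phi$ is immediate from the fact that dualization is a group isomorphism on full linear groups.

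For surjectivity, given $\tilde g \in \operatorname{Aut}(\Sigma(R)) \subseteq \operatorname{GL}(N(R))$, let $g \in \operatorname{GL}(M(R))$ be the contragredient, so that $\phi(g) = \tilde g$ by construction. I would show $g \in \operatorname{Aut}(R)$ in three steps. First, use the geometry of the Weyl fan: the maximal cones $\sigma_S$ are the closed Weyl chambers, so $\tilde g$ permutes them and hence permutes their codimension-one walls. The union of these walls is precisely the reflection hyperplane arrangement $\bigl\{H_\alpha^\ast : \alpha \in R\bigr\}$ in $E^\vee$, where $H_\alpha^\ast = \{f \mid f(\alpha) = 0\}$, so $\tilde g$ permutes this set of hyperplanes. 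Second, dualize: the contragredient action of $g$ on $E$ correspondingly permutes the set of lines $\{\R\alpha : \alpha \in R\}$. Third, upgrade from lines to roots: since $g$ preserves the lattice $M(R)$ and each root is primitive in $M(R)$, we must have $g(\alpha) = c\alpha'$ with $c \in \Z$; applying the same argument to $g^{-1}$ forces $c \in \{\pm 1\}$, so $g(\alpha) \in R$. This gives $g \in \operatorname{Aut}(R)$ as desired.

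The main obstacle is the third step, which rests on the primitivity of each $\alpha \in R$ as an element of the root lattice $M(R)$. For type $A$ (the concern of this section) this is transparent from $M(A_n) = \{x \in \Z^{n+1} : \sum x_i = 0\}$ with $R = \{e_i - e_j\}$. For general root systems the primitivity can be verified by pairing any hypothetical divisor $\beta = \alpha/n$ against $\alpha^\vee$ to force $n \in \{1,2\}$, with the $n=2$ case excluded by a case-by-case check (or by noting that the existence of $\alpha/2$ in $M(R)$ would produce a root of half the length of $\alpha$, contradicting the known root-length ratios in irreducible root systems). Alternatively, one can argue more structurally: $g$ sends the fundamental chamber $\sigma_S$ to another chamber $\sigma_{S'}$, identifies the simple roots of $S$ with those of $S'$ via the primitive facet normals of the dual cone, and then, using that $W(R)\cdot S = R$ and that $g$ normalizes $W(R)$, concludes that $g(R) = R$.
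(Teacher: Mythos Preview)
Your proof is correct. The paper's own argument is a one-liner: the set $R$ is recovered from $\Sigma(R)$ as the union, over all maximal cones $\sigma_S$, of the extremal ray generators of the dual cones $\sigma_S^\vee \subset E$ (these generators are exactly the simple roots in $S$, and every root is simple for some choice of $S$); hence any fan automorphism dualizes to a permutation of $R$, giving an explicit inverse to $\phi$.

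Your route is essentially the dual of this. Rather than reading off the simple roots as the extremal rays of $\sigma_S^\vee$, you read off the reflecting hyperplanes $H_\alpha^\ast$ as the linear spans of the codimension-one cones in $\Sigma(R)$, then dualize to the lines $\R\alpha$, and finally invoke primitivity of roots in $M(R)$ to pin down $\alpha$ itself. The paper's argument implicitly uses the same primitivity (identifying ``extremal ray generator'' with ``root'' requires knowing the root is the primitive lattice point on its ray), but does not spell it out; you make this step explicit, which is a virtue. The trade-off is that the paper's version is shorter and avoids the detour through hyperplanes and contragredients: working directly with the dual cones $\sigma_S^\vee$ keeps everything in $E$ from the start. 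Your alternative structural argument at the end (matching simple systems via facet normals and using that $g$ normalizes $W(R)$) is in fact closest in spirit to what the paper does.
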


\begin{proof}
 First note that the set $R$ can be reconstructed from $\Sigma(R)$ by taking the union of the extremal rays generating the dual cones $\sigma_S^\vee$ for all $\sigma_S$. Thus any symmetry of the fan induces a symmetry of $R$. This gives the inverse map to $\phi$.
\end{proof}

Here we focus on the case $R = A_n$. In \cite{LosevManin}, the authors showed that $X(A_n)$ is a moduli space of rational curves with $(n+1)$ marked points and $2$ poles. Another useful proof appeared in \cite{BatyrevBlume}. 

Using this perspective, Castravet and Tevelev exhibited an exceptional
collection on $X(A_n)$ that is stable under the action of permuting the
marked points and flipping the poles, i.e., an $(S_{n+1} \rtimes
C_2)$-stable collection. Here we demonstrate that Castravet and
Tevelev's exceptional collection satisfies the conditions of
Proposition~\ref{prop:blackbox} and hence descends to an exceptional
collection on any form of $X(A_n)$ (in characteristic $0$). 

To do this requires a bit of translating divisors and actions from the moduli-theoretic language to the toric language. We recall the moduli-theoretic languge. 

\begin{defn}
 Let $N$ be a set of order $n$. A \emph{chain of polar} $\pone$'s is a $\left(\{0,\infty\} \cup N\right)$-marked linear nodal chain of $\mathbb{P}^1$'s with $0$ on the left tail and $\infty$ on the right tail. A chain of polar $\pone$'s is \emph{stable} if 
 \begin{enumerate}
  \item marked points do not coincide with nodes,
  \item only $N$-marked points are allowed to coincide,
  \item each component of the chain has at least three special points (nodes or marked points). 
 \end{enumerate}
 We write $LM_N$ for the corresponding moduli space. We also use $LM_n$ depending on the context.  Note that the universal curve over $LM_n$ is isomorphic to $LM_{n+1}$.
\end{defn}

\begin{thm}\label{thm:univcurve}
 The toric variety $X(A_{n-1})$ is isomorphic to $LM_n$. Moreover, if we fix an embedding $A_{n-1} \to A_n$, the corresponding map $X(A_n) \to X(A_{n-1})$ is the universal curve.
Moreover, $X(A_n) \to X(A_{n-1})$ is a toric morphism.
\end{thm}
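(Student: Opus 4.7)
The first clause $X(A_{n-1}) \simeq LM_n$ is proved in \cite{LosevManin} and reproved in \cite{BatyrevBlume}, so the plan is to cite this and concentrate on the remaining two clauses, establishing them simultaneously. The strategy is to produce the morphism $X(A_n) \to X(A_{n-1})$ directly from the root-system embedding as a toric morphism, and then identify it with the universal curve by comparing torus-fixed points on the two sides.

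For the toric-morphism part, an embedding $A_{n-1} \hookrightarrow A_n$ is, up to the action of $W(A_n)$, the standard inclusion obtained by deleting a single node from the Dynkin diagram. This corresponds to an inclusion of Euclidean spaces $E(A_{n-1}) \hookrightarrow E(A_n)$, which dualizes to a surjection of cocharacter lattices $\pi_\ast \colon N(A_n) \twoheadrightarrow N(A_{n-1})$. I would check that $\pi_\ast$ sends $\Sigma_{A_n}$ into $\Sigma_{A_{n-1}}$: the Weyl chambers of $\Sigma_{A_{n-1}}$ are cut out by the inequalities $\langle -, \alpha \rangle \geq 0$ for $\alpha$ a simple root of $A_{n-1}$, and these are a subset of the inequalities defining Weyl chambers of $\Sigma_{A_n}$. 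Hence for any cone $\sigma_S \in \Sigma_{A_n}$, its image $\pi_\ast(\sigma_S)$ is contained in some Weyl chamber of $\Sigma_{A_{n-1}}$, so $\pi_\ast$ is a map of fans and induces a toric morphism $\pi \colon X(A_n) \to X(A_{n-1})$.

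To identify $\pi$ with the universal curve $LM_{n+1} \to LM_n$, I would invoke the explicit bijection between Weyl chambers of $\Sigma_{A_n}$ and torus-fixed points of $LM_{n+1}$ (maximally degenerate stable chains) supplied by the Losev--Manin/Batyrev--Blume description: after fixing a base chamber, its $W(A_n) = S_{n+1}$-translates correspond to the orderings of the $n+1$ marked points between the poles $0$ and $\infty$. Under this bijection, deleting the $i$-th node of the Dynkin diagram of $A_n$ corresponds to forgetting the marked point between the $i$-th and $(i+1)$-th positions, and the universal curve is exactly the map that forgets a marked point and stabilizes. Since both morphisms are toric and agree on the cocharacter lattices, they coincide.

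The main technical obstacle is the bookkeeping in the previous paragraph: fixing the correspondence between simple roots of $A_n$ and gaps between marked points requires a choice that is canonical only up to $S_{n+1} \rtimes C_2$. This symmetry, however, is shared by both sides --- it is precisely the group acting on $LM_{n+1}$ by permuting marked points and swapping the two poles, and by Lemma~\ref{lem:X(R)FanAut} it is also $\operatorname{Aut}(\Sigma_{A_n})$ --- so one can fix the identification on a single base chamber and extend equivariantly. Once this is done, parts (2) and (3) fall out of the equality of the two maps on fans.
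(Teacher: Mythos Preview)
The paper's proof is purely a citation: both the isomorphism $X(A_{n-1})\simeq LM_n$ and the identification of $X(A_n)\to X(A_{n-1})$ with the universal curve are attributed to \cite[Theorem~2.6.3]{LosevManin} (with \cite[Theorem~3.19]{BatyrevBlume} as an alternative reference), and the toricity of the map is then read off from \cite[Proposition~1.4]{BatyrevBlume}. Your proposal instead reconstructs the last two clauses directly, producing the toric morphism from the lattice surjection $N(A_n)\twoheadrightarrow N(A_{n-1})$ and then matching it with the forgetful map via the fixed-point description. This is a legitimate, more self-contained route---you are essentially reproving the content of the Batyrev--Blume reference rather than invoking it---whereas the paper takes the shortest path and cites. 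Either buys the same conclusion; yours is more transparent about why the map is toric, the paper's is more economical.

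One small correction in your fan argument: for a fixed Weyl chamber $\sigma_S$ of $A_n$, the simple roots of the corresponding chamber of $A_{n-1}$ are not literally a subset of $S$. If the deleted index sits in position $j$ of the ordering determined by $S$, the new simple root $e_{\sigma(j-1)}-e_{\sigma(j+1)}$ is the \emph{sum} of two consecutive elements of $S$, not an element of $S$ itself. The inequality it imposes is of course implied by the two summand inequalities, so $\pi_\ast(\sigma_S)$ does land in a single chamber of $\Sigma_{A_{n-1}}$ as you want; but the phrase ``these are a subset of the inequalities'' should be replaced by ``these are nonnegative combinations of the inequalities'' (equivalently: simple roots of $A_{n-1}$ are positive roots of $A_n$ for the chosen chamber).
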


\begin{proof}
This is \cite[Theorem 2.6.3]{LosevManin}. See also \cite[Theorem 3.19]{BatyrevBlume}. 
The map is consequently toric by \cite[Proposition~1.4]{BatyrevBlume}.
\end{proof}

Under this isomorphism, the closures of the torus orbits on $X(A_n)$ have the
following moduli-theoretic description.
Fix a partition $N_1 \sqcup N_2 = N$ and let $\delta_{N_1}$ denote the
divisor parametrizing polar chains of length exactly $2$ having the
first marked by $N_1$ and the last marked by $N_2$.
For a partition with more parts, $N_1 \sqcup N_2 \sqcup \cdots \sqcup N_t = N$,
one has the locus $Z_{N_1,\dots,N_t}$ parametrizing polar chains of length exactly $t$,
where the $i$-th $\pone$ is marked by $N_i$.
These loci are precisely the proper torus orbit closures on $X(A_n)$.

Note that each loci is a complete intersection
\begin{displaymath}
 Z_{N_1,\dots,N_t} := \delta_{N_1} \cap \delta_{N_1 \cup N_2} \cap \cdots \cap \delta_{N_1 \cup \cdots \cup N_{t-1}}. 
\end{displaymath}
Moreover, we have an isomorphism 
\begin{displaymath}
 Z_{N_1,\dots,N_t} \simeq LM_{N_1} \times LM_{N_2} \times \cdots \times LM_{N_t}
\end{displaymath}
where the left node of each $\pone$ is marked with $0$ and the right node is marked with $\infty$.
Thus, we have toric morphisms 
\begin{displaymath}
  i_{N_1,\ldots,N_t} : LM_{N_1} \times LM_{N_2} \times \cdots \times LM_{N_t} \to LM_N \ .
\end{displaymath} 
Also, for each subset $K \subset N$, we get a forgetful map $\pi_K: LM_N
\to LM_K$, which is a toric morphism since it is a composition of maps
from Theorem~\ref{thm:univcurve}.

Recall there is a set of line bundles $\mathbb{G}_N$ on $LM_N$ \cite[Definition 1.5]{CT}, and one generates a larger set $\mathsf{H}_N$ of sheaves via
\begin{displaymath}
 \mathsf{H}_N := \left \lbrace \left(i_{N_1,\ldots,N_t}\right)_*
(G_{l_1} \boxtimes \ldots \boxtimes G_{l_t})
\mid \forall N_1 \cup \cdots \cup N_t = N \ , \ G_{l_j}
\in \mathbb{G}_{N_j} \right \rbrace,
\end{displaymath}
where $i_{N_1,\ldots,N_t}: Z_{N_1, \ldots, N_t} \hookrightarrow LM_N$ is the inclusion. 

\begin{thm}\label{thm:CTexcp}
 There is an ordering on the set 
 \begin{displaymath}
  \mathsf{CT}_N :=
\mathsf{H}_N \cup \left(
\bigcup_{K \subsetneq N} \{ \pi_K^\ast E \mid \ E \in \mathsf{H}_K \} \right)
\cup \{ \mathcal{O} \}
 \end{displaymath}
making it into an $(S_N \rtimes C_2)$-stable exceptional collection under permutations of the two sets of markings.
\end{thm}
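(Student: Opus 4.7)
The plan is to leverage the work of Castravet and Tevelev \cite{CT}, who already produce an ordering on $\mathsf{CT}_N$ making it a full exceptional collection; the remaining content of the theorem is the $(S_N \rtimes C_2)$-stability of the \emph{underlying set}. Recall from Definition~2.6 that stability requires only that the set of isomorphism classes be preserved, not that the ordering be $G$-equivariant. So after invoking their exceptionality result, the task reduces to checking that the three building blocks $\mathbb{G}_K$ (for subsets $K \subseteq N$), the closed immersions $i_{N_1,\ldots,N_t}$, and the forgetful morphisms $\pi_K$ behave equivariantly with respect to marking-permutations and the pole involution.

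First I would handle the $S_N$-part, which is essentially formal. The $S_N$-action relabels the markings, so a permutation $\sigma$ carries the partition $N = N_1 \sqcup \cdots \sqcup N_t$ to the partition into the subsets $\sigma(N_j)$, induces an isomorphism $LM_{N_j} \xrightarrow{\sim} LM_{\sigma(N_j)}$, and satisfies $\sigma \circ i_{N_1,\ldots,N_t} = i_{\sigma(N_1),\ldots,\sigma(N_t)} \circ (\sigma_1 \times \cdots \times \sigma_t)$ and $\sigma \circ \pi_K = \pi_{\sigma(K)} \circ \sigma$. Once one verifies that $\mathbb{G}_K$ is $S_K$-stable (which is immediate from its construction in \cite[Def.~1.5]{CT}, as it is defined in terms of marking-labeled combinatorial data without any distinguished ordering of the markings), the set $\mathsf{H}_N$ is $S_N$-stable by pushforward, and then $\pi_K^\ast\mathsf{H}_K$ is carried into $\pi_{\sigma(K)}^\ast\mathsf{H}_{\sigma(K)}$, proving $S_N$-stability of $\mathsf{CT}_N$.

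Next I would handle the $C_2$ pole involution $\iota$, which reverses a polar chain of $\mathbb{P}^1$'s and swaps $0$ with $\infty$ on each component. Geometrically, $\iota$ sends the torus-orbit closure $Z_{N_1,\ldots,N_t}$ to $Z_{N_t,\ldots,N_1}$, and under the product decompositions the restriction is the composition of factor reversal and the pole involution on each $LM_{N_j}$. Writing $\iota_j$ for the pole involution on $LM_{N_j}$, we get
\[
\iota^\ast \left( (i_{N_1,\ldots,N_t})_\ast (G_{l_1} \boxtimes \cdots \boxtimes G_{l_t}) \right) \;\simeq\;
(i_{N_t,\ldots,N_1})_\ast \left( \iota_t^\ast G_{l_t} \boxtimes \cdots \boxtimes \iota_1^\ast G_{l_1}\right).
\]
So the closure of $\mathsf{H}_N$ under $\iota$ reduces to $C_2$-stability of each $\mathbb{G}_{N_j}$, which again is a direct combinatorial check in \cite[Def.~1.5]{CT} since the generators there come in pairs swapped by the pole involution (or are self-dual). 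Because $\pi_K \circ \iota_N = \iota_K \circ \pi_K$ (forgetting markings commutes with reversing the chain), the $\pi_K^\ast \mathsf{H}_K$ part is $C_2$-stable whenever $\mathsf{H}_K$ is, completing the proof.

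The main obstacle is the last combinatorial check, namely verifying that the Castravet--Tevelev generating set $\mathbb{G}_K$ is itself invariant under the pole involution on $LM_K$; this is the only place where one has to unpack the internal structure of their construction rather than use its naturality. Everything else (the pushforward/pullback formalism and the $S_N$-symmetry) follows formally. Once $C_2$-invariance of $\mathbb{G}_K$ is established for all $K$, assembling the pieces via Steps 2--4 and invoking the exceptional-collection statement of \cite{CT} yields the theorem.
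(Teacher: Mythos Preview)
The paper's proof is a single citation: ``This is \cite[Proposition~1.5]{CT}.'' In other words, Castravet and Tevelev already prove the full statement, including the $(S_N \rtimes C_2)$-stability, not just the exceptionality; the paper does no additional work here.

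Your proposal is correct but takes a more hands-on route. You treat the exceptional-collection property as the part coming from \cite{CT} and then supply your own argument for stability, decomposing it into an $S_N$-piece (formal, via naturality of relabeling) and a $C_2$-piece (via the explicit effect of the pole involution on the strata $Z_{N_1,\ldots,N_t}$ and on $\mathbb{G}_K$). This is sound, and the pushforward/pullback compatibility formulas you write down are the right ones. The difference from the paper is only one of packaging: the paper defers the entire burden to \cite{CT}, whereas you unpack the stability argument down to the point where the only remaining input from \cite{CT} is the internal $(S_K \rtimes C_2)$-stability of the generating set $\mathbb{G}_K$ and the exceptionality itself. Your version has the advantage of making the stability mechanism visible to the reader without consulting the reference; the paper's version has the advantage of brevity and of not duplicating an argument already in the literature. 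Neither approach avoids the dependence on \cite{CT}, since you still need their result both for exceptionality and for the base case on $\mathbb{G}_K$.
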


\begin{proof}
 This is \cite[Proposition 1.5]{CT}.
\end{proof}

\begin{prop}\label{prop:CTaut=rootAut}
 The action of $S_{n+1} \rtimes C_2$ given by permuting the two sets of marked points corresponds to the action of $\operatorname{Aut}(A_n)$ on $X(A_n)$. 
\end{prop}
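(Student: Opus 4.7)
The plan is to show that both actions have the same image in $\operatorname{Aut}(X(A_n))$, namely the subgroup $\operatorname{Aut}(\Sigma(A_n))$ of torus-normalizing automorphisms, and then match them up via their induced permutations on the rays of $\Sigma(A_n)$. By Lemma~\ref{lem:X(R)FanAut} we have $\operatorname{Aut}(A_n) \simeq \operatorname{Aut}(\Sigma(A_n))$, and $\operatorname{Aut}(A_n) = W(A_n) \rtimes \Gamma(A_n) \simeq S_{n+1} \rtimes C_2$ for $n \geq 2$ (the case $n=1$ can be checked by hand on $\pone$).

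First, I would observe that the moduli-theoretic action of $S_{n+1} \rtimes C_2$ on $LM_{n+1} \simeq X(A_n)$ preserves the torus stratification: permuting markings and flipping the two poles are symmetries of the combinatorial data defining the strata $Z_{N_1,\ldots,N_t}$ described after Theorem~\ref{thm:univcurve}. In particular the open torus orbit is preserved, so the action factors through $\operatorname{Aut}(\Sigma(A_n))$, producing a homomorphism
\[
 \psi : S_{n+1} \rtimes C_2 \to \operatorname{Aut}(\Sigma(A_n)).
\]

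To show $\psi$ is an isomorphism and realizes the identification coming from Lemma~\ref{lem:X(R)FanAut}, I would inspect the induced permutation on the set of rays. On the moduli side, the $T$-invariant prime divisors are the $\delta_K$ for non-empty proper subsets $K \subsetneq \{1,\ldots,n+1\}$; the $S_{n+1}$-action permutes the labels $K$ naturally, and the pole-flip acts by complementation $K \mapsto \{1,\ldots,n+1\}\setminus K$. On the root-system side, the rays of $\Sigma(A_n)$ are the $W(A_n)$-translates of the fundamental coweights $\omega_1^\vee,\ldots,\omega_n^\vee$, yielding $\sum_{i=1}^n \binom{n+1}{i}=2^{n+1}-2$ rays that are also naturally in bijection with non-empty proper subsets of $\{1,\ldots,n+1\}$ (via the support of each translate). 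The Weyl action is coordinate permutation of these labels, and the Dynkin involution, which sends $\omega_i^\vee$ to $\omega_{n+1-i}^\vee$, acts on labels by complementation (up to a $w_0$-twist built into the identification). Matching these two combinatorial actions on rays gives injectivity of $\psi$, and since both groups have the same order $2(n+1)!$, $\psi$ is an isomorphism of the desired form.

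The main technical obstacle is setting up a clean dictionary between the moduli-theoretic torus of $LM_{n+1}$ (acting by rescaling the affine coordinate on each $\pone$ component) and the cocharacter lattice $N(A_n)$, so that the two labelings of the rays by subsets of $\{1,\ldots,n+1\}$ genuinely agree on the nose rather than up to an ambient twist. Once this identification of tori is pinned down using the explicit Losev--Manin construction, the comparison of the two $S_{n+1} \rtimes C_2$ actions is purely combinatorial.
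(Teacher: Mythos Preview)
Your approach is correct but genuinely different from the paper's. You argue indirectly: both actions normalize the torus, hence land in $\operatorname{Aut}(\Sigma(A_n))$, and you match them by comparing their induced permutations on the set of rays (labeled by nonempty proper subsets of $\{1,\ldots,n+1\}$), then conclude by an order count. The paper instead works directly through the Batyrev--Blume realization of the universal curve $X(A_{n+1}) \to X(A_n)$: it writes down the $(n+1)$ sections $s_i$ coming from the projections $A_{n+1} \to A_n$ and the two polar sections $s_0, s_\infty$ coming from the toric divisors associated to $\pm v_{n+2}$, and then observes by inspection that the Weyl group permutes the $s_i$ while the Dynkin involution (negation on the fan) swaps $s_0$ and $s_\infty$.

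The paper's route is shorter because the dictionary you flag as the ``main technical obstacle'' is already built into the Batyrev--Blume construction: the sections \emph{are} the marked points, so no separate matching of torus structures or ray labelings is needed. Your route has the advantage of being purely combinatorial once the obstacle is resolved, and it makes transparent why the two copies of $S_{n+1} \rtimes C_2$ agree on $\operatorname{Aut}(\Sigma)$. One small point worth tightening: preserving the torus stratification only gives you a map into $T \rtimes \operatorname{Aut}(\Sigma)$, not $\operatorname{Aut}(\Sigma)$ itself, so strictly speaking you have shown the two actions agree modulo $T$. This is harmless for the downstream application (the objects in $\mathsf{CT}_N$ are torus-equivariant by Proposition~\ref{prop:CTistoric}, so a torus discrepancy does not affect stability), but you should say so explicitly.
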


\begin{proof}
 We use the standard presentation of the root system for $A_n$ as $e_i - e_j$ for $1 \leq i < j \leq n+1$ and follow \cite[Construction 3.6]{BatyrevBlume}. The embedding $A_n \hookrightarrow A_{n+1}$ gives the universal curve $X(A_{n+1}) \to X(A_n)$. For $i \in \{1,\ldots,n\}$, we take the $(n+1)$ projections $A_{n+1} \to A_n$, whose kernels are generated by $e_i - e_{n+1}$ for $1 \leq i \leq n+1$. These give sections $s_i : X(A_n) \to X(A_{n+1})$. Finally, for the polar sections, we have the dual vector $v_{n+2}$. The vectors $v_{n+2}$ and $-v_{n+2}$ give toric invariant divisors which are isomorphic to $X(A_n)$ \cite[Proposition 1.9]{BatyrevBlume}. The isomorphisms give the other sections $s_0$ and $s_{\infty}$. 
 
 The Weyl group is the permutation group of the $e_i$, and hence of the $e_i - e_{n+2}$. In particular, it permutes the $s_i$. The outer involution acts on the fan by negation and thus exchanges the cone corresponding to $v_{n+2}$ with the cone corresponding to $-v_{n+2}$.
\end{proof}

\begin{cor}\label{cor:CTisstable}
 The set $\mathsf{CT}_N$ is $\operatorname{Aut}(\Sigma(A_n))$-stable. 
\end{cor}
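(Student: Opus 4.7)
The plan is to chain together the three preceding results in the subsection. By Lemma~\ref{lem:X(R)FanAut}, the map $\phi : \operatorname{Aut}(A_n) \to \operatorname{Aut}(\Sigma(A_n))$ is an isomorphism, so it suffices to exhibit $\mathsf{CT}_N$ as $\operatorname{Aut}(A_n)$-stable under the induced action on $X(A_n)$. Writing $\operatorname{Aut}(A_n) = W(A_n) \rtimes \Gamma(A_n) = S_{n+1} \rtimes C_2$, the problem reduces to identifying this action with the action of $S_{n+1} \rtimes C_2$ on $LM_{n+1} \simeq X(A_n)$ coming from permutations of the $(n+1)$-marked points combined with the interchange of the two poles $0$ and $\infty$.

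This identification is precisely the content of Proposition~\ref{prop:CTaut=rootAut}: the Weyl factor $S_{n+1}$ permutes the sections $s_1, \ldots, s_{n+1}$ constructed from the projections $A_n \hookrightarrow A_{n+1}$, while the diagram involution acts by negation on the cocharacter lattice, interchanging the cones corresponding to $v_{n+2}$ and $-v_{n+2}$, which supply the polar sections $s_0$ and $s_\infty$. Thus the $\operatorname{Aut}(\Sigma(A_n))$-action on $X(A_n) \simeq LM_{n+1}$ coincides with the action of $S_N \rtimes C_2$ considered in Theorem~\ref{thm:CTexcp}, where $N = \{1, \ldots, n+1\}$.

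Finally, Theorem~\ref{thm:CTexcp} states that $\mathsf{CT}_N$ is stable under precisely this $(S_N \rtimes C_2)$-action. Combining these identifications yields that $\mathsf{CT}_N$ is $\operatorname{Aut}(\Sigma(A_n))$-stable, as desired.

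I expect no real obstacle beyond careful bookkeeping of the various group actions: there is nothing to prove beyond verifying that the three actions (fan automorphisms, root system automorphisms via $\phi$, and moduli-theoretic reparametrizations of marked curves) are genuinely the same action on $X(A_n)$. Since this compatibility has already been established in Proposition~\ref{prop:CTaut=rootAut}, the corollary is an immediate consequence, and the proof is essentially a one-line citation of the three prior results.
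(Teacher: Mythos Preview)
Your proposal is correct and follows essentially the same approach as the paper, which simply states that the result is an immediate corollary of Lemma~\ref{lem:X(R)FanAut} and Proposition~\ref{prop:CTaut=rootAut}. You have merely unpacked the chain of identifications more explicitly and added an explicit citation to Theorem~\ref{thm:CTexcp}, which the paper leaves implicit.
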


\begin{proof}
 This is an immediate corollary of Lemma~\ref{lem:X(R)FanAut} and Proposition~\ref{prop:CTaut=rootAut}. 
\end{proof}

\begin{prop}\label{prop:CTistoric}
Each object in the collection $\mathsf{CT}_N$ is torus-equivariant. 
\end{prop}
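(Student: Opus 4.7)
The strategy is to treat the three types of objects in $\mathsf{CT}_N$ separately. The structure sheaf $\mathcal{O}$ is tautologically $T$-equivariant. For the remaining two types, namely $\pi_K^\ast E$ with $E \in \mathsf{H}_K$ and $(i_{N_1,\ldots,N_t})_\ast(G_{l_1} \boxtimes \cdots \boxtimes G_{l_t})$, the plan is to reduce to the following two closure properties of equivariance: (a) pullback along a toric morphism preserves $T$-equivariance, and (b) pushforward along a closed toric inclusion of an orbit closure preserves $T$-equivariance (via the quotient torus of the orbit).

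First, I would observe that each line bundle in $\mathbb{G}_K$ is equivariant on $LM_K \simeq X(A_{|K|-1})$. Since $LM_K$ is a smooth toric variety, every line bundle on it can be written as $\mathcal{O}(D)$ for some torus-invariant Cartier divisor $D$ and hence admits a canonical $T_K$-equivariant structure by standard toric theory. Next, for objects $\pi_K^\ast E$, I would invoke that $\pi_K : LM_N \to LM_K$ is a toric morphism, as already noted to be a composition of universal-curve maps from Theorem~\ref{thm:univcurve}. Pulling back an equivariant sheaf along an equivariant morphism yields an equivariant sheaf, which handles this case.

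For the pushforward case, I would proceed in two steps. First, I'd note that the orbit closure $Z_{N_1,\ldots,N_t}$ is the torus-invariant complete intersection of the boundary divisors $\delta_{N_1 \cup \cdots \cup N_j}$, so the inclusion $i_{N_1,\ldots,N_t}$ is equivariant with respect to $T_N$ acting on $Z_{N_1,\ldots,N_t}$ through the quotient torus $T_{Z}$. Second, I'd use that the isomorphism
\[
Z_{N_1,\ldots,N_t} \simeq LM_{N_1} \times \cdots \times LM_{N_t}
\]
is an isomorphism of toric varieties identifying $T_Z$ with the product torus $T_{N_1} \times \cdots \times T_{N_t}$. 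Under this identification, the external tensor product $G_{l_1} \boxtimes \cdots \boxtimes G_{l_t}$ of (already) equivariant line bundles on the factors is equivariant for the product torus, hence equivariant on $Z_{N_1,\ldots,N_t}$ for $T_Z$, and its pushforward to $LM_N$ is $T_N$-equivariant.

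The main obstacle is the last combinatorial compatibility: matching the quotient torus structure on the orbit closure $Z_{N_1,\ldots,N_t} \subset X(A_{n-1})$ with the product torus on $LM_{N_1} \times \cdots \times LM_{N_t}$. I would verify this either directly from the description of torus orbit closures in a smooth toric variety (the sublattice of $N$ corresponding to the cone of $Z$) and the explicit fan of $X(A_{n-1})$, or by appealing to \cite{BatyrevBlume}, where the moduli-theoretic product decomposition of the boundary is seen to come from the toric combinatorics. Once this identification is made, the three cases above combine to show every object in $\mathsf{CT}_N$ is torus-equivariant.
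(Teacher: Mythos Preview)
Your proposal is correct and follows essentially the same approach as the paper: both reduce to the torus-equivariance of line bundles, then use that equivariance is preserved under external tensor product, pushforward along orbit-closure inclusions, and pullback along the toric forgetful maps $\pi_K$. The only cosmetic difference is in the pushforward step: the paper packages your ``$T_N$ acts on $Z_{N_1,\ldots,N_t}$ through the quotient torus $T_Z$'' as an explicit splitting $T = S \times S'$ with $S'$ acting trivially on $i(Z)$, and then invokes the restriction functor $\operatorname{Res}_\psi$ and equivariant pushforward from \cite{BFK2} rather than arguing directly; your ``main obstacle'' of identifying $T_Z$ with the product torus is exactly what is encoded in this splitting, which the paper simply asserts as a standard property of orbit closures in smooth toric varieties.
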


\begin{proof}
Line bundles are always isomorphic to torus-equivariant line bundles,
so all objects in $\mathbb{G}_N$ are torus-equivariant.
There is a canonical equivariant structure on tensor products and on
pullbacks by equivariant morphisms (see \S{2}~of~\cite{BFK2});
thus each object $G_1 \boxtimes \ldots \boxtimes G_n$
is torus-equivariant for $G_{l_j} \in \mathbb{G}_{N_j}$.
Let $i : Z \to X$ be shorthand for some map $i_{N_1,\ldots,N_t}$.
There is a splitting of tori $T = S \times S'$ where $Z$ is an $S$-toric
variety and $S'$ acts trivially on $i(Z)$.
Let $\psi : T \to S$ denote the projection. 
We have a composition of functors
\[
\mathsf{D^b}(\operatorname{Coh}_S Z) \to
\mathsf{D^b}(\operatorname{Coh}_T Z) \to
\mathsf{D^b}(\operatorname{Coh}_T X)
\]
where the first map is the functor $\operatorname{Res}_\psi$
(\S{2.9}~of~\cite{BFK2}) and the second map is the $T$-equivariant
pushforward (\S{2.5}~of~\cite{BFK2}).
This composition reduces to the ordinary pushforward
$i_\ast : \mathsf{D^b}(Z) \to \mathsf{D^b}(X)$ when the equivariant
structure is forgotten.  We conclude that each object of
$\mathsf{H}_K$ is torus-equivariant
and the result follows.
\end{proof}

We now prove the main result of this section:

\begin{prop}\label{prop:X(An)excpcoll}
 Let $k$ be a field of characteristic zero and $X$ a form of $X(A_n)$ over $k$. Then $X$ admits a full exceptional collection of sheaves. 
\end{prop}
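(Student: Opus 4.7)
The plan is to recognize that all the heavy lifting has already been done by the three preceding results, and that the statement will follow from a direct application of the black-box Proposition~\ref{prop:blackbox}. First I would fix a finite Galois splitting field $L/k$ of $X$ with Galois group $G$, so that $X_L \simeq X(A_n)_L$ as a split toric variety with fan $\Sigma(A_n)$ and torus $T$.

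Next I would assemble the three input hypotheses needed by Proposition~\ref{prop:blackbox} applied to $X(A_n)_L$. Theorem~\ref{thm:CTexcp} (which encodes \cite[Proposition~1.5]{CT} together with the fullness established there) provides the ordered set $\mathsf{CT}_N$ as a full exceptional collection on $X(A_n)_L$; by its very construction, every element of $\mathsf{CT}_N$ is either a line bundle, a pullback of a line bundle along a toric forgetful map $\pi_K$, or a pushforward of an exterior product of line bundles along a closed embedding $i_{N_1,\ldots,N_t}$, and in particular is a coherent sheaf. Corollary~\ref{cor:CTisstable} then upgrades the $(S_N \rtimes C_2)$-stability to $\Aut(\Sigma(A_n))$-stability, and Proposition~\ref{prop:CTistoric} provides the required $T$-equivariance of each object.

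With these three ingredients in hand, Proposition~\ref{prop:blackbox} applies directly: any $k$-form $X$ of $X(A_n)$ admits a full exceptional collection $\mathsf{E}'$, and the final clause of that proposition guarantees that because $\mathsf{CT}_N$ consists of sheaves, so does $\mathsf{E}'$. The characteristic zero hypothesis enters only through its use in the Castravet--Tevelev construction of $\mathsf{CT}_N$; otherwise the descent machinery of Section~\ref{sect:galdesc} is characteristic-free.

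There is no genuine obstacle left at this stage of the paper, since each step is already packaged as a ready-to-use lemma. The only point requiring a moment's care is confirming that fullness of $\mathsf{CT}_N$ (not just its exceptionality, which is what Theorem~\ref{thm:CTexcp} literally states) is part of what \cite{CT} provides, so that the output of Proposition~\ref{prop:blackbox} is indeed a \emph{full} collection rather than merely an exceptional one.
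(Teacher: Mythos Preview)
Your proposal is correct and follows essentially the same route as the paper: combine Theorem~\ref{thm:CTexcp}, Corollary~\ref{cor:CTisstable}, and Proposition~\ref{prop:CTistoric} to feed directly into Proposition~\ref{prop:blackbox}. Your remark that fullness must be extracted from \cite{CT} (since Theorem~\ref{thm:CTexcp} as stated only asserts exceptionality) is well taken and is implicit in the paper's argument.
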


\begin{proof}
 Combining Theorem~\ref{thm:CTexcp}, Corollary~\ref{cor:CTisstable}, and
Proposition~\ref{prop:CTistoric} allows us to appeal to
Proposition~\ref{prop:blackbox} and conclude that $\mathsf{CT}_N$
descends to an exceptional collection of sheaves on $X$. 
\end{proof}

\begin{rem}
 To remove the characteristic zero assumption one needs to extend generation results of \cite{CT} to nonzero characteristic. This could conceivably be done by reversing the flow of reasoning in \cite{CT}, using the fact we know the collections for $V_n$ in any characteristic. We do not pursue this. 
\end{rem}


\bibliographystyle{alpha}
\bibliography{ExcCollToric}

\end{document}